\documentclass[11pt,english]{amsart}
\usepackage[T1]{fontenc}
\usepackage[utf8]{inputenc}
\usepackage{geometry}
\usepackage{tikz}
\usetikzlibrary{patterns}
\usetikzlibrary{decorations.pathreplacing}
\usepackage{subcaption}
\usepackage{graphicx}
\usepackage{babel}
\addto\shorthandsspanish{\spanishdeactivate{~<>}}

\usepackage{enumitem}
\usepackage{color}
\usepackage{mathrsfs}
\usepackage{mathtools}
\usepackage{amsmath}
\usepackage{amsthm}
\usepackage{amssymb}
\usepackage{csquotes}
\usepackage[unicode=true,pdfusetitle,
 bookmarks=true,bookmarksnumbered=false,bookmarksopen=false,
 breaklinks=false,pdfborder={0 0 1},backref=false,colorlinks=false]
 {hyperref}
\usepackage[backend=biber,style=numeric,natbib=true,maxbibnames=99]{biblatex} 

\addbibresource{example.bib} 

\usepackage{scalerel,stackengine}
\stackMath
\newcommand\reallywidehat[1]{%
\savestack{\tmpbox}{\stretchto{%
  \scaleto{%
    \scalerel*[\widthof{\ensuremath{#1}}]{\kern-.6pt\bigwedge\kern-.6pt}%
    {\rule[-\textheight/2]{1ex}{\textheight}}
  }{\textheight}%
}{0.5ex}}%
\stackon[1pt]{#1}{\tmpbox}%
}
\parskip 1ex

\makeatletter

\theoremstyle{plain} 
\newtheorem{teorema}{Theorem}[section]
\newtheorem{lema}[teorema]{Lemma} 
\newtheorem{coro}[teorema]{Corollary}
\newtheorem{prop}[teorema]{Proposition}

\theoremstyle{definition} 
\newtheorem{defin}[teorema]{Definition} 

\theoremstyle{remark}

\newtheorem{rem}[teorema]{Remark}

\newcommand{\val}{\mathbf{Val}}
\newcommand{\Pa}{\text{Pa}}

\usepackage{anysize}    

\marginsize{4cm}{4cm}{2cm}{2cm} 

\begin{document}

\title[Non-continuous valuations on convex bodies]{Non-continuous valuations on convex bodies and a new characterization of volume}

\author[J.~S. Ib\'a\~nez-Marcos]{Jorge S. Ib\'a\~nez-Marcos}
\address{Departamento de An\'alisis Matem\'atico y Matem\'atica Aplicada\\
Facultad de Matem\'aticas \\ Universidad Complutense de Madrid \\
Madrid 28040}
\email{jorgesib@ucm.es}

\author[P. Tradacete]{Pedro Tradacete}
\address{Instituto de Ciencias Matem\'aticas (CSIC-UAM-UC3M-UCM)\\
Consejo Superior de Investigaciones Cient\'ificas\\
C/ Nicol\'as Cabrera, 13--15, Campus de Cantoblanco UAM\\
28049 Madrid, Spain.}
\email{pedro.tradacete@icmat.es}

\author[I. Villanueva]{Ignacio Villanueva}
\address{Departamento de An\'alisis Matem\'atico y Matem\'atica Aplicada\\
Facultad de Matem\'aticas \\ Universidad Complutense de Madrid \\
Madrid 28040}
\email{ignaciov@mat.ucm.es}

\keywords{Bounded valuation; Homogeneous valuation; Convex bodies; Volume}

\subjclass[2020]{52B45,52A38} 

\thanks{Research partially supported by grants PID2020-116398GB-I00, PID2024-162214NB-I00 and CEX2023-001347-S funded by MCIN/AEI/10.13039/501100011033. The first author has also received financial support from Ministerio de Ciencia, Innovación y Universidades through an FPU Grant FPU22/02055}

\begin{abstract}
This paper investigates the use of automatic continuity techniques in the context of valuations on convex bodies. We first provide an automatic continuity theorem for valuations restricted to parallelotopes with respect to a fixed basis. This result in combination with a counting argument provides a strengthened version of a classical characterization of volume due to Hadwiger. As a byproduct of the proof it is shown that $[0,n-1]\cup\{n\}$ are precisely the possible degrees of homogeneity of bounded translation invariant valuations on $n$-dimensional convex bodies. 
\end{abstract}

\maketitle
\section{Introduction}

The theory of valuations on convex bodies has become one of the central pillars of modern convex and integral geometry. Its origins can be traced back to the early 20th century, when Dehn proved a negative solution for Hilbert's Third Problem \cite{Dehn}. Early contributions by Hadwiger, most notably his characterization theorem in the 1950s, established a cornerstone of the field: every continuous, rigid motion invariant valuation on convex bodies in Euclidean space is a linear combination of intrinsic volumes (see \cite{Hadwiger}). This result revealed the deep structure underlying seemingly disparate geometric measures and opened the door to systematic classification results.

In the decades that followed, the theory of valuations underwent a remarkable expansion, influenced by methods from topology, measure theory, and representation theory. The introduction of continuous and smooth valuations by Alesker in the early 2000s \cite{Alesker1} marked a new phase in the subject, connecting convex geometry with areas such as integral geometry on manifolds and the theory of distributions.
As a result, valuation theory has become an active interface between convex geometry, geometric measure theory, and algebraic topology, with applications ranging from stochastic geometry to complex and Hermitian integral geometry (see, e.g. \cite{Bernig, Handbook, equivariantes, LibroHug, MonikayFabian, logconcavas, Schneider}). In the recent years, relevant classification results on valuation theory on different context have appeared (see \cite{Alesker1999,BernigFu2011,ColesantiLudwigMussnig2024,Haberl,Knoerr24,LudwigReitzner1999,LudwigReitzner2010,Estrellado2,Reticulos}).


The definition of valuation is ``algebraic'' in the sense that it does not impose any continuity conditions. Nevertheless, non-continuous valuations seem too unwieldy, and it becomes necessary to require some form of continuity in order to obtain positive characterization results. In the closely related framework of measure theory, the analogy would be considering $\sigma$-additive, which encodes a certain continuity property, or bounded measures, instead of general finitely additive measures whose behavior is beyond any reasonable description. It is therefore natural to study whether in the valuation setting continuity can be replaced by weaker requirements which, when coupled with the valuation condition, will suffice to guarantee characterization results. 

Previous related research includes several works where non continuous valuations on polytopes are considered (see, for example, \cite{Equidecomposability,Monika-noncontinuous,SLinvariantPolitopes}), and other works where the focus is on non continuous valuations defined on the convex bodies which have the origin as an interior point. In this case, additional symmetry assumptions such as SL$(n)$-invariance \cite{Haberl-Parapatits,LudwigReitzner2010} or SL$(n)$-covariance \cite{SLcovariance} are imposed. To the best of our knowledge, non continuous translation invaluations on the full space of convex bodies, $\mathcal{K}^n$, have not been previously studied.


In the direction of weakening continuity conditions, it is worth recalling the theory of automatic continuity, which emerged from the study of algebraic structures endowed with compatible topologies, where one seeks to understand when a purely algebraic property together with appropriate definability conditions actually suffice to guarantee continuity of certain maps. In its most classical form, the theory addresses the question of when a homomorphism between topological groups (see \cite{RosendalAC}), rings, or algebras must necessarily be continuous, even without explicit continuity assumptions. These foundational insights reveal that, in many analytical contexts, continuity is not an additional hypothesis but rather an inevitable consequence of the underlying structure. 


The theory of valuations and the theory of automatic continuity share a foundational element in their early development: The Cauchy Functional Equation. In the former, it underlies the basis for the definition of what is considered the first valuation, Dehn's valuation \cite{Dehn}. In the latter, it is the primordial example of the theory and the first example studied in depth \cite{Cauchy}, \cite{Hamel}. Although both theories share this important link, to the best of our knowledge not much attention has been paid to the use of automatic continuity in the theory of valuations. 

In this article, we investigate the applicability of techniques from automatic continuity theory to valuation theory, with the aim of replacing the continuity condition with weaker conditions, such as boundedness or Borel measurability, which might still guarantee some control on the behavior of valuations and allow us to find useful characterizations.

Our main result is a new version of the classical result by Hadwiger (see Theorem \ref{t:volume}) characterizing the volume in $\mathbb R^n$ as the unique translation invariant continuous and $n$-homogeneous valuation on $\mathcal{K}^n$, the $n$-dimensional convex bodies. In our result the continuity assumption is replaced by a boundedness condition. It is also worth noting that in this setting, without continuity assumptions, it is not known whether the inclusion-exclusion principle for valuations is valid, so our proof must circumvent this. Before stating the result, let $B$ denote the (euclidean) unit ball of $\mathbb{R}^n$ and $\mathcal{K}(B)=\{K\in\mathcal K^n:K\subset B\}$.

\begin{teorema}\label{t:main}
Let $V:\mathcal{K}^n\to\mathbb{R}$ be a translation invariant weakly additive map that is $n$-homogeneous and is bounded on $\mathcal{K}(B)$. Then, $V$ is proportional to the Lebesgue measure on $\mathbb R^n$.
\end{teorema}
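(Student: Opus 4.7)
My plan is to first apply the automatic continuity theorem on parallelotopes mentioned in the introduction to determine $V$ on axis-aligned boxes, and then to extend the identification to arbitrary convex bodies via a counting argument based on the integer grid applied to a large dilate $NK$ of $K$.

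\textbf{Step 1: $V$ on boxes.} I consider $f(a_1,\ldots,a_n):=V([0,a_1]\times\cdots\times[0,a_n])$ for $a_i\geq 0$. Slicing such a box by a hyperplane perpendicular to a coordinate direction gives two convex pieces, so weak additivity together with translation invariance implies that $f$ satisfies Cauchy's equation in each variable separately. Translation invariance combined with $n$-homogeneity and the hypothesis on $\mathcal{K}(B)$ uniformly bounds $f$ on $[0,r]^n$ for every $r>0$ (any such box, after recentering and rescaling by $2/(r\sqrt{n})$, sits inside $B$, and its $V$-value rescales by $(r\sqrt{n}/2)^n$). The automatic continuity theorem for parallelotopes then forces $f$ to be linear in each variable, hence a polynomial of multidegree at most $(1,\ldots,1)$. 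The $n$-homogeneity $f(\lambda a)=\lambda^n f(a)$ singles out the unique monomial of total degree $n$, giving $f(a_1,\ldots,a_n)=c\,a_1\cdots a_n$ for some constant $c$. By translation invariance, $V(P)=c\cdot\mathrm{vol}(P)$ on every axis-aligned box.

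\textbf{Step 2: Extension to $\mathcal{K}^n$ via counting.} I fix $K\in\mathcal{K}^n$ and a large integer $N$. Successively slicing $NK$ by the integer coordinate hyperplanes preserves convexity at each step, so iterated weak additivity yields
\[
V(NK)=\sum_{m\in\mathbb{Z}^n}V(NK\cap Q_m),\qquad Q_m:=m+[0,1]^n.
\]
The cubes $Q_m$ fall into three classes: disjoint from $NK$ (contributing $0$); contained in $NK$ (each contributing $c$ by Step~1, totalling $c\,N^n\mathrm{vol}(K)+O(N^{n-1})$); and boundary cubes meeting both $NK$ and its complement (in number $O(N^{n-1})$ since the surface area of $NK$ scales as $N^{n-1}$). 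For each boundary piece, translation invariance places it inside $[0,1]^n$, and $n$-homogeneity together with boundedness on $\mathcal{K}(B)$ bounds $|V(NK\cap Q_m)|$ by a universal constant. Summing gives $|V(NK)-cN^n\mathrm{vol}(K)|=O(N^{n-1})$; since $V(NK)=N^n V(K)$ by $n$-homogeneity, dividing by $N^n$ and letting $N\to\infty$ forces $V(K)=c\cdot\mathrm{vol}(K)$.

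\textbf{Main obstacle.} The counting step is the crux. Without continuity or a valid inclusion-exclusion principle, the only mechanism for decomposing $V(NK)$ into grid pieces is iterated weak additivity, which is legitimate only because each intersection of $NK$ with a closed half-space remains convex. Equally critical is the uniform control of $V$ on the $O(N^{n-1})$ boundary pieces, coming from the interplay of $n$-homogeneity and boundedness on $\mathcal{K}(B)$; this is what makes the surface-order error negligible compared to the $N^n$-sized main term and ultimately replaces the continuity used in Hadwiger's classical argument.
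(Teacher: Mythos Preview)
Your overall strategy matches the paper's: determine $V$ on axis-aligned boxes via automatic continuity, then use a grid-counting argument for general $K$. (The paper shrinks a grid of mesh $\varepsilon\to 0$ over a fixed $K$; you dilate $K$ by $N\to\infty$ over the unit grid; by $n$-homogeneity these are equivalent.) However, there is a genuine gap in Step~2.

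The identity you assert,
\[
V(NK)=\sum_{m\in\mathbb{Z}^n}V(NK\cap Q_m),
\]
is \emph{not} what iterated weak additivity produces. Each application of weak additivity to a hyperplane $H$ leaves a correction term $-V(\,\cdot\,\cap H)$, so after iterating over all coordinate hyperplanes one obtains (this is exactly the paper's Lemma~4.2)
\[
V(NK)=\sum_{m}V(NK\cap Q_m)+\sum_{l}\theta_l\,V(K_l),\qquad \theta_l\in\{\pm1\},\ \dim K_l\le n-1.
\]
These lower-dimensional pieces are not negligible a priori: a slice such as $NK\cap\{x_1=k\}$ has diameter of order $N$, and boundedness on $\mathcal{K}(B)$ together with $n$-homogeneity only bounds its $V$-value by $O(N^n)$; with $O(N)$ such slices already in the first coordinate direction the accumulated error is $O(N^{n+1})$, swamping the main term. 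To make your counting work you must first show that $V$ is \emph{simple}, i.e.\ $V(L)=0$ whenever $\dim L<n$.

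The paper obtains simplicity by induction on the ambient dimension (inside the proof of Lemma~4.3): restricting to an $(n-1)$-dimensional subspace $E$, one gets a bounded translation-invariant $n$-homogeneous valuation on $\mathcal{K}(E)$; the analogue of your Step~1 in $E$ shows it vanishes on parallelotopes (a separately linear function of $n-1$ variables cannot be $n$-homogeneous unless it is zero), and then the inductive hypothesis of the counting lemma in dimension $n-1$ forces $V|_E\equiv 0$. You have not set up this induction, and without it Step~2 does not go through. Once simplicity is in place, the rest of your counting argument is essentially the paper's.
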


Weakly additive maps (see Definition \ref{d:weak additivity}) behave like valuations on a restricted class of sets. In particular, valuations on convex sets are weakly additive. The proof of Theorem \ref{t:main} is based on a counting argument together with an automatic continuity result for valuations on a restricted class of convex bodies, the parallelotopes. Actually, the proof only requires $V$ to be $n$-homogeneous for rational coefficients.


Motivated by the previous result, we further investigate the structure of bounded translational invariant valuations on convex sets. A well-known decomposition result due to McMullen states that if a continuous translation invariant valuation on $\mathcal{K}^n$ is $m$-homogeneous, then $m$ must belong to the set $\{0, 1, \ldots, n\}$. In opposition to this, we will show that the degrees of homogeneity of bounded translation invariant valuations on $\mathcal{K}^n$ are precisely $[0,n-1] \cup \{n\}$.

Finally, some of the limitations of our approach are exhibited. Several counterexamples illustrate where automatic continuity techniques break down when extended beyond $\Pa(\{e_i\}_{i=1}^n)$, offering some insight into the boundaries of this method's effectiveness in the broader context of valuations on  convex sets.

The paper is structured as follows. After stating the definitions and known results from valuation theory and automatic continuity theory, in Section \ref{S:par} we focus on a restricted class of convex bodies in $\mathbb R^n$: paralelotopes with respect to a fixed basis, denoted by $\Pa(\{e_i\}_{i=1}^n)$. Within this setting, we establish an automatic continuity result for translation invariant valuations (Corollary \ref{c:automatic continuity}). This result will be crucial for the proof of the main result. Section \ref{S:main} is devoted to the proof of our main result, Theorem \ref{t:main}. The main technical tools are Theorem \ref{t:McMullen-pa} from Section \ref{S:par}, and a volume counting argument which we have isolated as Lemma \ref{l:counting}. In Section \ref{S:homogeneidad}, we prove the above mentioned result about the possible homogeneity degrees for bounded translation invariant valuations on convex sets. We use a family of bounded translation invariant $m$-homogeneous valuations for $m\in [0, n-1]$, and show how Lemma \ref{l:counting} implies that such valuations cannot exist for $m\in (n-1, n)$. In Section \ref{S:limitations} we show some of the limitations to the applicability of automatic continuity theory to valuation theory. Finally, in the Appendix we have included a general characterization of valuations on parallelotopes which are not necessarily translation invariant. In particular, it is shown that in that case, there can be no satisfactory automatic continuity results.

\section{Preliminaries}\label{S:preliminaries}

\subsection{Preliminaries on valuations}

Let $\mathcal{K}^n$ denote the set of convex bodies in $\mathbb{R}^n$, that is, the set of convex, compact and nonempty subsets of $\mathbb{R}^n$. The set $\mathcal{K}^n$ is equipped with the operations of Minkowski addition and scalar multiplication: Given $K,L\in\mathcal{K}^n$ and $\lambda\geq 0$, these operations are defined by
\[
    K+L=\{a+b:a\in K,\;b\in L\}, \quad \lambda K=\{\lambda a:a\in K\}.
\]
Additionally, we also equip $\mathcal{K}^n$ with the Hausdorff metric, denoted by $d_H$, which is given by
\[
    d_H(K,L)=\min\{\varepsilon\geq0: K\subset L+\varepsilon B, \;L\subset K+\varepsilon B\},
\]
where $B$ denotes the closed Euclidean unit ball. We refer to \cite{Schneider} as a standard reference on these notions.
Let us denote by $\mathcal{K}(B)$ the set of convex bodies lying inside the unit closed unit ball $B$, that is
\[
    \mathcal{K}(B)= \{K\in\mathcal{K}^n:K\subset B\}.
\]

In this work, we will consider two distinguished subclasses of convex bodies: the set of polytopes, denoted by $\mathcal{P}^n$, and the set parallelotopes associated with a basis, denoted by $\Pa(\{e_i\}_{i=1}^n)$, which we define next.

\begin{defin}
Given $\{e_1, \dots,e_n\}$ a basis of $\mathbb{R}^n$, let us define 
\[
    \Pa(\{e_i\}_{i=1}^n)=\left\{\sum_{i=1}^n [a_i,b_i]e_i : a_i\le b_i\right\}.
\]
\end{defin}

$\Pa(\{e_i\}_{i=1}^n)$ consists of all compact, non-empty parallelotopes whose edges are parallel to the coordinate axes determined by the basis $\{e_i\}_{i=1}^n$. This class includes non-proper parallelotopes, such as the singletons. Throughout this work, unless otherwise specified, the basis $\{e_i\}_{i=1}^n$ will be fixed. 


\begin{defin}
Let $\mathcal{S}$ be $\mathcal{K}^n,\mathcal{P}^n$, or $\Pa(\{e_i\}_{i=1}^n)$, and let $E$ be a vector space. We say that $V:\mathcal{S}\rightarrow E$ is a valuation if
\[
    V(K\cup L)+V(K\cap L)=V(K)+V(L),
\]
whenever $K\cup L\in \mathcal{S}$.
\end{defin}

Most of the valuations considered in this paper will be scalar-valued. However, we will also make use of area measures, denoted by $S_i$, which are valuations taking values on $\mathcal{M}(\mathbb{S}^{n-1})$, the space of Borel regular measures on the sphere. For a detailed account of these valuations, we refer to \cite[Chapter 4]{Schneider}. 

Although valuation is the standard notion in the literature, our results also hold if the mapping $V$ is weakly additive.

\begin{defin}[Weak additivity]\label{d:weak additivity}
Let $\mathcal{S}=\mathcal{K}^n,\mathcal{P}^n$, or $\Pa(\{e_i\}_{i=1}^n)$, and let $E$ be a vector space. We say that a map $V:\mathcal{S}\longrightarrow E$ is weakly additive if for any hyperplane $H$ with corresponding closed halfspaces $H^-$ and $H^+$ the relation
\[
    V(K)+V(K\cap H)=V(K\cap H^+)+V(K\cap H^-)
\]
holds whenever $K,K\cap H,K\cap H^+,K\cap H^-\in \mathcal{S}$.
\end{defin}

When working in $\mathcal{P}^n$ or in $\Pa(\{e_i\}_{i=1}^n)$, it is known that weak additivity is equivalent to the valuation property (see \cite[Theorem 6.2.3]{Schneider}). This is no longer the case for $\mathcal{K}^n$ (see \cite[Page 173]{Debiladitiva}); however, if $V$ is weakly additive on $\mathcal{K}^n$, then its restrictions, $V|_{\mathcal{P}^n}$ and $V|_{\Pa(\{e_i\}_{i=1}^n)}$, are valuations on their respective domains. This fact allows us to weaken the valuation hypothesis to the condition that $V$ is weakly additive.


Since general valuations may exhibit pathological behavior, additional geometrical or topological properties are typically imposed. Continuity (as maps from $\mathcal S$ with the restricted Hausdorf metric to $\mathbb R$) is a standard assumption. However, our main interest will be to weaken this continuity hypothesis. In addition, translation invariant valuations will also be essential here: a valuation $V:\mathcal{S}\to E$ is translation invariant if for any $K\in\mathcal{S}$ and any $x\in\mathbb{R}^n$ it satisfies
\[
    V(K+x)=V(K).
\]
Unless stated otherwise, all the valuations appearing in this work with will be translation invariant.

Recall also that a valuation $V$ is said to be $\alpha$-homogeneous if, for every $\lambda\geq 0$ and every $K\in\mathcal{S}$,
\[
    V(\lambda K)=\lambda^\alpha V(K).
\]
For $0\leq j\leq n$, let $\val_j$ denote the set of $j$-homogeneous, continuous, and translation invariant scalar-valued valuations on $\mathcal K^n$. Due to their fundamental strucutre, these valuations play a central role in valuation theory. In particular, they satisfy the Inclusion-Exclusion property:

\begin{defin}[Inclusion-Exclusion]
A valuation $V:\mathcal{S}\to E$ satisfies the Inclusion-Exclusion property if, for every $m\in\mathbb{N}$,
\[
    V\left(\bigcup_{i=1}^mK_i\right)=\sum_{r=1}^m(-1)^{r-1}\sum_{1\leq i_1\leq\dots\leq i_r\leq m}V\left(\bigcap_{j=1}^rK_{i_j}\right)
\]
whenever $K_1,\dots,K_m\in\mathcal{S}$ with $\bigcup_{i=1}^m K_i\in\mathcal{S}$.
\end{defin}

It is well known that all valuations defined on  $\mathcal{P}^n$ satisfy the Inclusion-Exclusion property (see \cite[Theorem 6.2.3]{Schneider}). In contrast, in the case of $\mathcal{K}^n$, it is known that all continuous valuations satisfy this (in fact, $\sigma$-continuity is enough, see \cite[Theorem 1.21]{sigmacont}). However, it remains an open question whether continuity can be completely dropped. Since our focus will be on valuations which are not necessarily continuous, we will not assume Inclusion-Exclusion.

Let us recall the fundamental volume characterization theorem for valuations due to Hadwiger \cite{Hadwiger}.

\begin{teorema}\label{t:volume}
Let $V\in \val_n$, then there exists $c\in\mathbb{R}$ such that 
\[
    V=c\text{Vol}, 
\] where \text{Vol} is the Lebesgue measure in $\mathbb{R}^n$.
\end{teorema}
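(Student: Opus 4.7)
The plan is to first pin down $V$ on axis-aligned parallelotopes and then extend to all of $\mathcal K^n$ by continuity. Set $Q=[0,1]^n$ and $c:=V(Q)$; the target identity is $V=c\cdot\text{Vol}$.

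I would begin with boxes of the form $\prod_i[0,\ell_i]$. By translation invariance, $f(\ell_1,\dots,\ell_n):=V(\prod_i[0,\ell_i])$ is well defined, and slicing such a box with a hyperplane orthogonal to the $i$-th axis yields, via the valuation property,
\[
f(\dots,\ell_i+\ell_i',\dots)=f(\dots,\ell_i,\dots)+f(\dots,\ell_i',\dots)-V(F),
\]
where $F$ is the $(n-1)$-dimensional interfacial box. The first technical subtask is to show $V(F)=0$ for every lower-dimensional axis-aligned box. To see this, I would dice $Q$ iteratively along each coordinate into $k$ congruent slabs, use $n$-homogeneity to evaluate $V([0,1/k]^n)=k^{-n}c$, and extract from the telescoping identities a relation of the form $V(F)=-(B/k)$ for a fixed $B$, which in the limit $k\to\infty$ forces $V(F)=0$. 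With $V(F)=0$, $f$ satisfies the Cauchy equation in each variable; continuity of $V$ under the Hausdorff metric then promotes additivity to linearity in each coordinate, yielding $f(\ell_1,\dots,\ell_n)=c\prod_i\ell_i=c\cdot\text{Vol}$ on all axis-aligned boxes.

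For the extension to $\mathcal K^n$, I would first bootstrap from axis-aligned boxes to arbitrary parallelotopes: composing $V$ with a linear change of basis preserves continuity, translation invariance, $n$-homogeneity, and the valuation property, so the previous analysis applies in every basis. The resulting multilinear-in-edge-vectors structure together with $n$-homogeneity forces $V$ on parallelotopes to be a scalar multiple of the absolute determinant, i.e.\ of volume. Since the valuation property on $\mathcal P^n$ is equivalent to full inclusion-exclusion, the identity $V=c\cdot\text{Vol}$ extends from parallelotopes to arbitrary polytopes via simplicial dissection. Finally, since $\mathcal P^n$ is dense in $(\mathcal K^n,d_H)$ and $V$ is continuous, the equality propagates to all convex bodies.

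The main obstacle I anticipate is the passage from boxes to arbitrary convex bodies: axis-aligned boxes are not dense in $\mathcal K^n$, and a general polytope cannot be literally cut into them. The remedy is to cover all parallelotopes via linear changes of basis and then exploit the inclusion-exclusion property available on $\mathcal P^n$ (equivalently, McMullen's polynomial expansion of $V$ on Minkowski combinations, whose top-degree term is the mixed volume), which is what ultimately transports the identification from parallelotopes to the full space of convex bodies.
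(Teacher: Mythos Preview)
A clarification first: the paper does not give its own proof of Theorem \ref{t:volume}; it is quoted as Hadwiger's classical result. The relevant comparison is with the paper's proof of the stronger Theorem \ref{t:main}, which specializes to the continuous case.

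Your first phase (showing $V=c\,\text{Vol}$ on axis-aligned boxes via separate Cauchy equations and continuity) is fine and matches Section \ref{S:par}. The genuine gap is in the extension. Composing with a linear map $A$ only yields $V|_{\Pa(\{Ae_i\})}=c_A\,\text{Vol}$ for some constant $c_A$ depending on the basis; nothing in your outline forces $c_A=c$. The Cauchy argument gives linearity in edge \emph{lengths} for a fixed basis, not multilinearity in edge \emph{vectors}: the parallelotope on $v_1+v_1',v_2,\dots,v_n$ is not a union of translates of those on $v_1$ and on $v_1'$. Even granting $V=c\,\text{Vol}$ on all parallelotopes, the step ``extend to polytopes via simplicial dissection'' fails outright: polytopes dissect into simplices, not parallelotopes, and a simplex is never translation-equidecomposable with a parallelotope (already in the plane the Hadwiger translative invariants obstruct this; in higher dimensions there are further Dehn-type obstructions). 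Finally, asserting that the top-degree coefficient in McMullen's polynomial expansion \emph{is} the mixed volume is essentially the statement you are trying to prove, so that alternative is circular.

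The paper avoids all of this with the counting argument of Lemma \ref{l:counting}. Once $\phi:=V-c\,\text{Vol}$ vanishes on a \emph{single} class $\Pa(\{e_i\})$, slice an arbitrary $K$ by the coordinate grid of mesh $\varepsilon$ (Lemma \ref{l:grid decomposition}); simplicity kills the lower-dimensional interfaces, interior cells are boxes and contribute $0$, and the $O(\varepsilon^{-(n-1)})$ boundary cells each contribute at most $\|V\|\cdot O(\varepsilon^{n})$. Sending $\varepsilon\to0$ gives $\phi(K)=0$ directly. No change of basis, no simplices, no mixed volumes are needed.
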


Our main result (Theorem \ref{t:main}) is an extension of this theorem where we replace the continuity hypothesis by a (weaker) boundedness condition. We refer the reader to \cite{Hadwiger} for the original proof of Theorem \ref{t:volume} and to \cite[Theorem 3.1.1]{Alesker} for a classical proof using valuation theory. The original proof in \cite{Hadwiger} is based in the theory of contents while the proof in \cite{Alesker} is based in McMullen's decomposition theorem, which we recall next:

\begin{teorema}[McMullen's decomposition]
Let $V:\mathcal{P}^n\to\mathbb{R}$ be a translation invariant valuation. For $i\in\{0,\dots,n\}$, there exist $\phi_i:\mathcal{P}^n\to\mathbb{R}$ translation invariant valuation such that $\phi_i(\lambda K)=\lambda^i\phi_i(K)$ for any positive rational $\lambda $ and
\[
    V(P)=\sum_{i=0}^n\phi_i(P)
\]
for $P\in\mathcal{P}^n$.
\end{teorema}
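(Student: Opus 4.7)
The plan is to show that for each fixed $P\in\mathcal{P}^n$ the function $\lambda \mapsto V(\lambda P)$ is a polynomial of degree at most $n$ on $\mathbb{Q}_{>0}$, and then to define $\phi_i(P)$ as the coefficient of $\lambda^i$. The first and most substantial step is polynomiality for integer scalings: I would show by induction on $\dim P$ that $k\mapsto V(kP)$ is a polynomial in $k$ of degree at most $\dim P$ on $\mathbb{N}$. The case $\dim P=0$ is immediate from translation invariance. For the inductive step, using that $(k+1)P=kP+P$ for convex $P$ together with weak additivity (equivalent to the valuation property on $\mathcal{P}^n$), I would produce a hyperplane-slicing decomposition of $(k+1)P$ that exhibits the difference $V((k+1)P)-V(kP)$ as a sum of $V$-values on polytopes of dimension strictly less than $\dim P$. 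By the induction hypothesis these are polynomials in $k$ of degree at most $\dim P-1$, and a standard finite-difference argument yields polynomiality of $V(kP)$ of degree at most $\dim P$.

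To extend from $\mathbb{N}$ to $\mathbb{Q}_{>0}$, write $V(kP)=\sum_{i=0}^{n} a_i(P) k^i$ and apply the same integer polynomiality to the rescaled polytope $P/m$: this yields $V(k\cdot(P/m))=\sum_{i=0}^{n} c_i^{(m)}(P)k^i$. Comparing with $V((km)\cdot(P/m))=V(kP)$ and matching coefficients of $k$ forces $c_i^{(m)}(P)=a_i(P)/m^i$, and consequently $V(\lambda P)=\sum_{i=0}^{n} a_i(P)\lambda^i$ for every $\lambda=k/m\in\mathbb{Q}_{>0}$. I would then set $\phi_i(P):=a_i(P)$, so that $\sum_i \phi_i = V$ and each $\phi_i$ is $i$-homogeneous over positive rationals by construction. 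Translation invariance of $\phi_i$ is inherited from $V$: since $V(k(P+x))=V(kP+kx)=V(kP)$, the polynomials in $k$ coincide and hence so do their coefficients.

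It remains to verify that each $\phi_i$ is itself a valuation. If $K,L\in\mathcal{P}^n$ with $K\cup L\in\mathcal{P}^n$, then for every $\lambda\in\mathbb{Q}_{>0}$ one has $\lambda(K\cup L)=\lambda K\cup\lambda L$ and $\lambda(K\cap L)=\lambda K\cap\lambda L$, both lying in $\mathcal{P}^n$. The valuation identity for $V$ reads
\[
V(\lambda(K\cup L))+V(\lambda(K\cap L))=V(\lambda K)+V(\lambda L),
\]
and by the previous paragraph both sides are polynomials in $\lambda$ of degree at most $n$. Since two polynomial functions agreeing on infinitely many rationals must agree coefficient-by-coefficient, I obtain $\phi_i(K\cup L)+\phi_i(K\cap L)=\phi_i(K)+\phi_i(L)$ for each $i$, as required.

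The main obstacle is the inductive polynomiality argument of the first paragraph: constructing the slicing of $(k+1)P$ cleanly and tracking which lower-dimensional polytopes appear requires combinatorial care, and the bookkeeping must respect translation invariance so that translated copies of the same polytope contribute identical values. Once this is in place, everything downstream follows from a routine polynomial-identity argument without any appeal to continuity of $V$.
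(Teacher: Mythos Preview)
The paper does not prove this theorem; it is recalled in the preliminaries as a known result, and the text explicitly says its own arguments \emph{circumvent} McMullen's decomposition in favor of the elementary analogue on parallelotopes (Theorem~\ref{t:McMullen-pa}). So there is no proof here to compare against, and I can only assess your sketch on its own merits.

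Your overall architecture---polynomiality of $k\mapsto V(kP)$ on $\mathbb{N}$, extension to $\mathbb{Q}_{>0}$ by rescaling, reading off $\phi_i$ as coefficients, and verifying the valuation identity coefficient-by-coefficient---is the standard one, and the last three steps are correct as you wrote them. The gap is exactly where you flag it, but it is more than bookkeeping: the claim that a hyperplane slicing of $(k{+}1)P$ expresses $V((k{+}1)P)-V(kP)$ as a sum of $V$-values on polytopes of \emph{strictly lower dimension} is false. Already for a $d$-simplex $T$ with a vertex at the origin, slicing $(k{+}1)T$ by the hyperplane carrying the facet of $kT$ opposite the origin yields $kT$ and a full-dimensional frustum $F_k$; only their intersection is $(d{-}1)$-dimensional. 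What actually drives the induction is different: for simplices one shows that $F_k$ can be further sliced into translates of finitely many \emph{$k$-independent} full-dimensional polytopes, occurring with multiplicities that are polynomials in $k$ of degree at most $d{-}1$, together with lower-dimensional pieces handled by the inductive hypothesis (for instance, for the standard $2$-simplex the frustum is a rectangle $[0,k]\times[0,1]$ glued to a single translate of $T$, and the rectangle is $k$ translates of the unit square minus $k{-}1$ segments). One then passes to arbitrary polytopes via triangulation and the inclusion-exclusion principle, which is available on $\mathcal{P}^n$. This combinatorial step is the real content of the polynomiality argument, and your sketch does not yet contain it.
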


Our proof of the main theorem also circumvents the use of McMullen's decomposition. In Section \ref{S:par}, we establish a simple analogue of this decomposition for valuations on $\Pa(\{e_i\}_{i=1}^n)$ (see Theorem \ref{t:McMullen-pa}). This proof relies only in elementary computations. In fact, the main point will be a simple geometrical observation given by Lemma \ref{lema:unioncuad}. This version of the Theorem, together with a few notions of automatic continuity and a counting argument via the appropriate grid of parallelotopes will be the key for the proof of Lemma \ref{l:counting}. 

Motivated by this, we explore whether the continuity assumptions can be relaxed in other classical results from valuation theory. Recall McMullen's characterization of $\val_{n-1}$ (see \cite{McMullen}):

\begin{teorema}[McMullen]\label{teo:McMullenn-1}
If $V\in\val_{n-1}$, then there exists $f\in C(\mathbb{S}^{n-1})$ such that
\[
    V(K)=\int_{\mathbb{S}^{n-1}}f\mathrm{d}S_{n-1}(K),
\]
where $S_{n-1}(K)$ denotes the surface area measure of $K$. Moreover, this function is unique up to addition with linear functions.
\end{teorema}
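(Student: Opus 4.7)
The plan is to first reduce the claim to polytopes via continuity and Hausdorff density, and then exploit the polynomial structure of $V$ under Minkowski addition. On a polytope $P$ with facets $F_1,\dots,F_m$ and outer unit normals $u_1,\dots,u_m$ the surface area measure is the atomic measure
\[
    S_{n-1}(P,\cdot)=\sum_{i=1}^m\operatorname{vol}_{n-1}(F_i)\,\delta_{u_i},
\]
so the target identity for polytopes reads $V(P)=\sum_i f(u_i)\operatorname{vol}_{n-1}(F_i)$ for some continuous $f:\mathbb{S}^{n-1}\to\mathbb{R}$.

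The main input is McMullen's polynomial expansion for continuous translation invariant valuations: $(\lambda_1,\dots,\lambda_r)\mapsto V(\lambda_1 K_1+\cdots+\lambda_r K_r)$ is a polynomial in the nonnegative variables $\lambda_i$, and $(n-1)$-homogeneity of $V$ forces it to be homogeneous of degree exactly $n-1$. Polarizing yields a continuous, symmetric, Minkowski-multilinear functional $\bar V:(\mathcal{K}^n)^{n-1}\to\mathbb{R}$, translation invariant in each argument, with $\bar V(K,\dots,K)=V(K)$. Plugging the unit ball $B$ into $n-2$ slots produces a Minkowski-linear, translation invariant, continuous functional $L(K):=\bar V(K,B,\dots,B)$; by the classical Hadwiger--Weil--Goodey representation, $L(K)=\int_{\mathbb{S}^{n-1}}h_K\,d\nu$ for a signed Borel measure $\nu$ with zero first moment. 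Combining this with the duality identity $\int h_K\,dS_{n-1}(L,\cdot)=\int h_L\,dS_{n-1}(K,\cdot)$ and the identification of the mixed area measure $S(B,\dots,B,K;\cdot)$ with $S_{n-1}(K,\cdot)$ up to a fixed normalizing constant, one recovers the desired representation $V(K)=\int f\,dS_{n-1}(K,\cdot)$ with $f\in C(\mathbb{S}^{n-1})$. Extension from polytopes to all of $\mathcal{K}^n$ uses weak continuity of $K\mapsto S_{n-1}(K,\cdot)$ together with continuity of $V$ and $f$.

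For uniqueness: if $\int f\,dS_{n-1}(K,\cdot)=0$ for every $K\in\mathcal{K}^n$, then since Minkowski's existence theorem realizes every centered Borel measure on $\mathbb{S}^{n-1}$ (i.e.\ $\int u\,d\mu(u)=0$) with full-dimensional support as $S_{n-1}(K,\cdot)$ for some $K$, the function $f$ must annihilate every such measure and is therefore the restriction of a linear functional $u\mapsto\langle v,u\rangle$. The main obstacle of the proof is the representation step: passing from the polarized object $\bar V$ to an intrinsic representation of $V$ itself requires the Hadwiger--Weil--Goodey theorem for support-function representations together with the duality relations between mixed area measures -- routine in principle but technical in execution, and encoding the bulk of the content of McMullen's theorem.
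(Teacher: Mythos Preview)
The paper does not prove this theorem; it is quoted as a preliminary result from \cite{McMullen}. So there is no ``paper's proof'' to compare with, and the only question is whether your argument stands on its own. It does not: there is a genuine gap in the representation step.

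You polarize to obtain $\bar V$ with $V(K)=\bar V(K,\dots,K)$ ($n-1$ copies of $K$), and then specialize to $L(K)=\bar V(K,B,\dots,B)$ (one $K$, $n-2$ copies of $B$). Representing $L$ as $L(K)=\int h_K\,d\nu$ is fine, but $L$ is a \emph{different} functional from $V$: it is $1$-homogeneous, not $(n-1)$-homogeneous, and knowing $\bar V$ on the single ray $(K,B,\dots,B)$ does not determine $\bar V$ on the diagonal $(K,\dots,K)$. Your bridge between the two is the assertion that $S(B,\dots,B,K;\cdot)$ coincides with $S_{n-1}(K,\cdot)$ up to a constant, and this is false: by the very definition $S_j(K,\cdot)=S(\underbrace{K,\dots,K}_{j},\underbrace{B,\dots,B}_{n-1-j};\cdot)$, the measure $S(B,\dots,B,K;\cdot)$ with $n-2$ balls is $S_1(K,\cdot)$, not $S_{n-1}(K,\cdot)$. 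So the ``duality plus identification'' step does not produce $\int f\,dS_{n-1}(K,\cdot)$; at best it produces a representation of $L$, which is not what you need. The actual content of McMullen's theorem is precisely the construction of $f$ from $V$ itself (e.g.\ by analyzing the facet contributions of $V$ on polytopes and proving consistency and continuity of the resulting function on $\mathbb{S}^{n-1}$), and your outline does not supply this.

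Your uniqueness argument via Minkowski's existence theorem is essentially correct.
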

At the beginning of Section \ref{S:limitations}, we will see that the continuity hypothesis cannot be dropped in this case.

\subsection{Preliminaries on automatic continuity}
The starting point of Automatic Continuity Theory can be traced back to the Cauchy Functional Equation, more precisely to the study of which functions $f:\mathbb{R}\to\mathbb{R}$ satisfy 
\[
    f(x+y)=f(x)+f(y)
\]
for any $x,y\in\mathbb{R}$. Solutions to this equation are called additive functions. Clearly, all linear functions satisfy the Cauchy Functional Equation, but could these be the only solutions? Cauchy \cite{Cauchy} proved that if the function $f$ is continuous, then the answer is positive. Later, in 1906, Hamel \cite{Hamel} proved the existence of discontinuous additive functions by constructing a basis of $\mathbb{R}$ over $\mathbb{Q}$, now known as a Hamel Basis. These discontinuous additive functions are also known as Hamel functions. An important aspect of the Cauchy Functional Equation is the dichotomy in the nature of its solutions, either they are continuous and linear, or extremely pathological functions. This fact can be summarized in the following well-known theorem (see e.g. \cite{Kuczma}).
\begin{teorema} \label{teo:CFE}
Let $f:\mathbb R\rightarrow \mathbb{R}$ be an additive function. The following are equivalent:
\begin{itemize}
    \item[(1)] $f$ is linear
    \item[(2)] $f$ is continuous
    \item[(3)] $f$ is bounded (above or below) in any bounded interval
    \item[(4)] $f$ is Borel.
\end{itemize}
\end{teorema}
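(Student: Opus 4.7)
The plan is to establish the cycle $(1)\Rightarrow(2)\Rightarrow(3)\Rightarrow(1)$ and then attach $(4)$ through the trivial implication $(1)\Rightarrow(4)$ and a measure-theoretic $(4)\Rightarrow(3)$. Three of the implications are essentially automatic: $(1)\Rightarrow(2)$ because $x\mapsto cx$ is continuous; $(2)\Rightarrow(3)$ because continuous functions on compact intervals are bounded; and $(1)\Rightarrow(4)$ because linear functions are Borel. All the real content sits in $(3)\Rightarrow(1)$ and $(4)\Rightarrow(3)$.

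For $(3)\Rightarrow(1)$, I would first record that additivity alone already yields $\mathbb{Q}$-linearity: a straightforward induction gives $f(nx)=nf(x)$ for $n\in\mathbb{N}$, and this extends routinely to $f(qx)=qf(x)$ for every $q\in\mathbb{Q}$. Assuming for contradiction that $f$ is not $\mathbb{R}$-linear, pick $a,b\in\mathbb{R}\setminus\{0\}$ with $f(a)/a\neq f(b)/b$, so that $(a,f(a))$ and $(b,f(b))$ are $\mathbb{R}$-linearly independent in $\mathbb{R}^{2}$. By $\mathbb{Q}$-linearity the graph of $f$ contains every point of the form $(qa+rb,\,qf(a)+rf(b))$ with $q,r\in\mathbb{Q}$, and this set is dense in $\mathbb{R}^{2}$. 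Consequently, over any non-degenerate bounded interval on the first coordinate, the second coordinate takes values that are unbounded both above and below, contradicting (3).

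For $(4)\Rightarrow(3)$, the idea is to pass through Lebesgue measurability and apply a Steinhaus-type argument. Writing $\mathbb{R}=\bigcup_{n\in\mathbb{N}}E_{n}$ with $E_{n}=\{x:|f(x)|\leq n\}$, at least one $E_{n}$ has positive Lebesgue measure. Steinhaus's theorem then guarantees a neighbourhood $(-\delta,\delta)\subset E_{n}-E_{n}$ of the origin, and additivity upgrades this to $|f(x)|\leq 2n$ for $|x|<\delta$. Combined with $\mathbb{Q}$-homogeneity, boundedness near $0$ propagates to boundedness on every bounded interval, establishing (3). The main conceptual obstacle is the density-of-graph step in $(3)\Rightarrow(1)$: it is short once $\mathbb{Q}$-linearity is in hand, but it is the crucial mechanism by which any of the weak regularity hypotheses (boundedness, continuity, or Borel measurability) is strong enough to promote $\mathbb{Q}$-linearity to full $\mathbb{R}$-linearity.
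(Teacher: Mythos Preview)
Your proof is correct and follows the standard classical arguments. Note, however, that the paper does not actually prove this theorem: it is stated as a well-known result with a reference to Kuczma, so there is no ``paper's own proof'' to compare against. Your treatment of $(3)\Rightarrow(1)$ via density of the graph and of $(4)\Rightarrow(3)$ via Steinhaus's theorem is exactly the expected route and would be an appropriate self-contained justification had the paper chosen to include one.
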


A very complete survey of automatic continuity on the more general setting of group homomorphisms can be found in \cite{RosendalAC} (see also \cite{RosendalPI} for more recent developments). Theorems in the spirit of \ref{teo:CFE} is what we refer to as an automatic continuity result, and although most developments usually focus on the Borel condition, our interest in the setting of valuations will shift to the boundedness condition.

We will also need to work with separately additive functions for which a version of Theorem \ref{teo:CFE} holds.
\begin{defin}
Let $G:\mathbb{R}^n\to\mathbb{R}$. We will say that $G$ is separately additive (or multiadditive) if for any $i\in\{1,\dots,n\}$ and any $a_1,\dots,a_{i-1},a_{i+1},\ldots,a_n\in\mathbb{R}$, the following is an additive function
\[
    x\longmapsto G(a_1,\dots,a_{i-1},x,a_{i+1},\dots,a_n).
\]
\end{defin}

\begin{lema}\label{lema:multiseparada}
Let $G:\mathbb{R}^n\to\mathbb{R}$ be a separately additive function. If $G$ is bounded on a set $U\subset\mathbb{R}^n$ that has nonempty interior (or it is Borel in $\mathbb{R}^n$) then $G$ is a separately linear function.
\end{lema}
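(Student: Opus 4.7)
The plan is to split into the Borel and boundedness alternatives and reduce each, in the end, to the one-variable statement of Theorem \ref{teo:CFE}.

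The Borel case is immediate. Fix $i\in\{1,\ldots,n\}$ and $a_1,\ldots,a_{i-1},a_{i+1},\ldots,a_n\in\mathbb{R}$; the slice $x\mapsto G(a_1,\ldots,a_{i-1},x,a_{i+1},\ldots,a_n)$ is additive by hypothesis and Borel as the composition of the Borel function $G$ with the continuous injection $x\mapsto(a_1,\ldots,a_{i-1},x,a_{i+1},\ldots,a_n)$. Theorem \ref{teo:CFE} then forces this slice to be linear, so $G$ is separately linear.

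For the bounded case I would proceed by induction on $n$, the base case $n=1$ being exactly Theorem \ref{teo:CFE}. For $n\ge 2$, first shrink $U$ to an open box $I_1\times\cdots\times I_n$ contained in its interior on which $|G|\le M$. For each fixed $(y_2,\ldots,y_n)\in I_2\times\cdots\times I_n$, the one-variable function $x_1\mapsto G(x_1,y_2,\ldots,y_n)$ is additive and bounded on $I_1$ by $M$, hence linear on all of $\mathbb{R}$ by Theorem \ref{teo:CFE}. Setting $\tilde G(y_2,\ldots,y_n):=G(1,y_2,\ldots,y_n)$, this yields
\[
    G(x_1,y_2,\ldots,y_n)=x_1\,\tilde G(y_2,\ldots,y_n)
\]
for every $x_1\in\mathbb{R}$ and $(y_2,\ldots,y_n)\in I_2\times\cdots\times I_n$. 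Picking any $c\in I_1\setminus\{0\}$ and plugging $x_1=c$ into the identity above shows that $\tilde G$ is bounded by $M/|c|$ on the nonempty open box $I_2\times\cdots\times I_n$. Since $\tilde G$ is also separately additive on $\mathbb{R}^{n-1}$, the inductive hypothesis applies and gives that $\tilde G$ is separately linear.

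It remains to promote the identity $G(x_1,y_2,\ldots,y_n)=x_1\tilde G(y_2,\ldots,y_n)$ from $\mathbb{R}\times I_2\times\cdots\times I_n$ to all of $\mathbb{R}^n$. Both sides, for fixed $x_1$, are separately additive in $(y_2,\ldots,y_n)$ on $\mathbb{R}^{n-1}$ (the right-hand side by separate linearity of $\tilde G$) and agree on the open box. Since an additive function $\mathbb{R}\to\mathbb{R}$ is completely determined by its restriction to any nonempty open interval (every real number is a finite sum of differences of points in such an interval), the equality propagates from each $I_j$ to $\mathbb{R}$ one coordinate at a time, giving the identity on all of $\mathbb{R}^n$ and hence separate linearity (in fact, multilinearity) of $G$. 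The delicate point of the whole argument, and the step I expect to be the main obstacle to write cleanly, is precisely this propagation from the box to the full space: although each coordinate extension is a one-variable matter, one must keep track of the other coordinates still ranging only over intervals, so some care with the order of the coordinate-by-coordinate extension (or an equivalent combinatorial identity expressing an arbitrary real as a finite signed sum of points of $I_j$) is needed.
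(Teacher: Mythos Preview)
Your proposal is correct and follows essentially the same route as the paper: induction on $n$, first obtain linearity in the first variable via Theorem~\ref{teo:CFE} on the interval $I_1$, then apply the induction hypothesis to the remaining $n-1$ variables. The only difference is in the final propagation step: the paper shows that $G$ is bounded on every bounded box and reapplies Theorem~\ref{teo:CFE} in the first coordinate for arbitrary $(y_2,\dots,y_n)$, whereas you use that a separately additive function vanishing on an open box must vanish everywhere (via the one-variable fact that an additive function is determined by its restriction to any open interval); both arguments are short and valid.
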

\begin{proof}
We will prove it by induction on $n$. The case $n=1$ follows from Theorem \ref{teo:CFE}. Assume $n>1$, and let $I\subset \mathbb R$, $V\subset \mathbb R^{n-1}$ be open balls such that $I\times V\subset U$. Let $(x_{2},\dots,x_n)\in V$ and consider the mapping $g:\mathbb{R}\to\mathbb{R}$ defined as $g(y)=G(y,x_2,\dots,x_n)$. Then, $g$ is an additive function and it is bounded on the interval $I$, so it is linear by Theorem \ref{teo:CFE}. Thus, for any $(x_2,\dots,x_n)\in V$, we get 
\[
    G(y,x_2,\dots,x_n)=g(y)=\alpha_{x_2,\dots,x_n}y
\]
for any $y\in\mathbb{R}$. Since $G(I\times V)$ is bounded, it follows that $\alpha_{x_2,\dots,x_n}$ is also bounded for any $(x_2,\dots,x_n)\in V$. This implies that for any open and bounded interval $J\subset \mathbb{R}$ one has that $G(J\times C)$ is bounded.

Now, for any $x\in \mathbb{R}$, the function $G(x,\cdot,\dots,\cdot):\mathbb{R}^{n-1}\to\mathbb{R}$ is separately additive and it is bounded on the open set $V\subset\mathbb{R}^{n-1}$. By induction hypothesis, $G(x,\cdot,\dots,\cdot)$ is separately linear for any $x\in\mathbb{R}$. Therefore, for any $x\in \mathbb{R}$ we get
\[
    G(x,y_2,\dots,y_n)=\beta(x)\prod_{i=2}^ny_i,
\]
and since $G(J\times C)$ is bounded for any open and bounded interval $J\subset \mathbb{R}$, we get that $\beta(J)$ is bounded for any such $J$. Thus, $G(J\times D)$ is bounded for any such $J$ and any open and bounded set $D\subset\mathbb{R}^{n-1}$. Fix $(y_2,\dots,y_n)\in\mathbb{R}^{n-1}$ and consider again $g(\cdot)=G(\cdot,y_2,\dots,y_n)$. Now, for any open and bounded $D\subset\mathbb{R}^{n-1}$ such that $(y_2,\dots,y_n)\in D$, we get that $g(J)$ is bounded for any open and bounded interval $J$ and, by Theorem \ref{teo:CFE}, we get that $g$ is lineal. 

In the case that $G$ is Borel, the proof is straightforward.
\end{proof}

\section{Translation Invariant valuations on parallelotopes}\label{S:par}
In this section, we first provide a characterization of translation invariant valuations on $\Pa(\{e_i\}_{i=1}^n)$ without assuming any continuity. Although the proof can be considered elementary, we have not found any explicit reference. This characterization will prove useful and it will show that translation invariant valuations do satisfy some analog to Theorem \ref{teo:CFE} (see Theorem \ref{teo:valuaciones_continuidad}). We refer to \cite[Ch. 4]{Klain} for a study of continuous and translation invariant valuations on $\Pa(\{e_i\}_{i=1}^n)$. 

As noted earlier, weak additivity is equivalent to the valuation property on $\Pa(\{e_i\}_{i=1}^n)$. Consequently, all the results in this section may be reformulated using weak additivity instead of the valuation property, and in the case of Lemma \ref{lema:Simple}, the restatement is strictly stronger.



Recall that in the definition of valuation, one only needs to check the identity $V(K_1)+V(K_2)=V(K_1\cup K_2)+V(K_1\cap K_2)$ when the union $K_1\cup K_2$ is in the family of sets where the valuation is defined. In the case of $\Pa(\{e_i\}_{i=1}^n)$, if one wants to check whether a mapping is a valuation then one only needs to  consider the case when the union $K_1\cup K_2$ is in $\Pa(\{e_i\}_{i=1}^n)$. The following lemma states that in order for $K_1\cup K_2$ to be in $\Pa(\{e_i\}_{i=1}^n)$ either one of the sets is contained in the other (in which case the valuation equality holds trivially) or $K_1$ and $K_2$ differ in only one dimension. Therefore, to check that a mapping is a valuation on $\Pa(\{e_i\}_{i=1}^n)$, it suffices to verify the valuation property only in this second case. 

\begin{lema}\label{lema:unioncuad}
Let $K_1,K_2\in\Pa(\{e_i\}_{i=1}^n)$ such that $K_j=\sum_{i=1}^n [a_i^j,b_i^j]e_i$ with $a_i^j\leq b_i^j$ for any $i\in\{1,\dots,n\}$ and $j=1,2$. If $K_1\cup K_2\in\Pa(\{e_i\}_{i=1}^n)$, then exactly one of the following is satisfied:
\begin{itemize}
    \item[1)] $K_1\subset K_2$ or $K_2\subset K_1$.
    \item[2)] There exists exactly one $k\in\{1,\dots,n\}$ such that $a_i^1=a_i^2$ and $b_i^1=b_i^2$ for all $i\neq k$, $[a_{k}^1,b_{k}^1]\not\subset[a_{k}^2,b_{k}^2]$ and $[a_{k}^2,b_{k}^2]\not\subset [a_{k}^1,b_{k}^1]$.
\end{itemize}
\end{lema}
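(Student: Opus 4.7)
The plan is to analyze the one-dimensional projections of $K_1$ and $K_2$ onto each coordinate axis and see how they must relate when their union is itself a parallelotope.

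First I would observe that, since the projection of $K_1\cup K_2$ onto the $i$-th axis equals $[a_i^1,b_i^1]\cup[a_i^2,b_i^2]$ and must be an interval (because $K_1\cup K_2\in\Pa(\{e_i\}_{i=1}^n)$), the two intervals have nonempty intersection for every $i$. Writing $K_1\cup K_2=\sum_i[c_i,d_i]e_i$, we get $[c_i,d_i]=[a_i^1,b_i^1]\cup[a_i^2,b_i^2]$. Next, I would introduce the index sets
\[
A=\{i:[a_i^1,b_i^1]\not\subset[a_i^2,b_i^2]\}, \qquad B=\{i:[a_i^2,b_i^2]\not\subset[a_i^1,b_i^1]\}.
\]
Plainly $A=\emptyset$ iff $K_1\subset K_2$ and $B=\emptyset$ iff $K_2\subset K_1$, so if either is empty we land in case~1.

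The core step is showing that when both $A$ and $B$ are nonempty, one necessarily has $A=B=\{k\}$ for a single index $k$, which is case~2. Suppose for contradiction there exist $i\in B$ and $j\in A$ with $i\neq j$. I would construct a point $p\in\sum_\ell[c_\ell,d_\ell]e_\ell$ that lies in neither $K_1$ nor $K_2$, contradicting $K_1\cup K_2=\sum_\ell[c_\ell,d_\ell]e_\ell$: pick the $i$-th coordinate of $p$ in $[a_i^2,b_i^2]\setminus[a_i^1,b_i^1]$ (nonempty by $i\in B$), the $j$-th coordinate in $[a_j^1,b_j^1]\setminus[a_j^2,b_j^2]$ (nonempty by $j\in A$), and for each remaining $\ell$ pick the $\ell$-th coordinate in the (nonempty) intersection $[a_\ell^1,b_\ell^1]\cap[a_\ell^2,b_\ell^2]$. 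Then $p\notin K_1$ because of its $i$-th coordinate and $p\notin K_2$ because of its $j$-th coordinate, yet all coordinates of $p$ lie in the respective $[c_\ell,d_\ell]$. This rules out $i\neq j$ in $A\times B$, forcing $A=B=\{k\}$ for a single $k$, which is exactly case~2.

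The cases are mutually exclusive because case~2 explicitly excludes inclusion of one box in the other, while case~1 asserts such inclusion. The only genuinely nontrivial step is spotting and verifying the bad-point construction above; everything else reduces to bookkeeping on the four possibilities for the pair $(A,B)$ being empty or nonempty.
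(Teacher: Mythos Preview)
Your proof is correct and follows essentially the same route as the paper: both arguments reduce to constructing a point $p$ with one coordinate in $[a_j^1,b_j^1]\setminus[a_j^2,b_j^2]$, another in $[a_i^2,b_i^2]\setminus[a_i^1,b_i^1]$, and the rest in the pairwise intersections, yielding $p\in K_1\cup K_2$ but $p\notin K_1$ and $p\notin K_2$. Your introduction of the index sets $A$ and $B$ is a clean organizational device that the paper leaves implicit, but the key step and the overall structure are the same.
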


\begin{proof}
Let us assume $K_1\cup K_2\in\Pa(\{e_i\}_{i=1}^n)$. Hence, there exist real numbers $c_i\leq d_i$ for $i\in\{1,\dots,n\}$ such that
\[
     K_1\cup K_2=\sum_{i=1}^n[c_i,d_i]e_i.
\]
Note that this clearly implies that $c_i=\min\{a_i^1,a_i^2\}$, $d_i=\max\{b_i^1,b_i^2\}$ and that $[a_i^1,b_i^1]\cap[a_i^2,b_i^2]\neq\emptyset$ for all $i$.

Now assume that $K_1\not\subset K_2$ and $K_2\not\subset K_1$. Therefore, there are two indices $j$ and $k$ such that
\[
    [a_{j}^1,b_{j}^1]\not\subset [a_{j}^2,b_{j}^2] \quad \text{ and }\quad [a_{k}^2,b_{k}^2]\not\subset [a_{k}^1,b_{k}^1].
\]
We will see that if $j\neq k$ then we run into a contradiction. For simplicity, let us assume without loss of generality that $j=1$ and $k=2$. Let $z_i\in [a_i^1,b_i^1]\cap[a_i^2,b_i^2]$ for all $i\neq 1,2$ and take
\[
    x\in [a_{1}^1,b_{1}^1]\setminus [a_{1}^2,b_{1}^2]\quad \text{ and }\quad y\in [a_{2}^2,b_{2}^2]\setminus [a_{2}^1,b_{2}^1].
\]
Clearly, we have that
\[
    x\in [c_1,d_1],\quad y\in[c_2,d_2] \quad \text{and}\quad z_i\in[c_i,d_i]
\]
for any $i$. Thus, taking the point $p=(x,y,z_3,\dots,z_n)$, we get $p\in K_1\cup K_2$ but $p\not\in K_2$ since $x\not\in [a_1^2,b_1^2]$. Analogously, $p\not\in K_1$ making a contradiction and proving the result.
\end{proof}

This lemma will allow us characterize translation invariant valuations defined on $\Pa(\{e_i\}_{i=1}^n)$ as {\em separately affine additive} functions, defined below. 

\begin{defin}
Let $\mathbb{R}_+=\{x\in\mathbb{R}: x\geq 0\}$. We call $f:\mathbb{R}_+\to\mathbb{R}$ an affine additive function if for all $x,y\in\mathbb{R}_+$ we have
\[
    f(x+y)=f(x)+f(y)-f(0).
\]
Equivalently, an affine additive function is the sum of an additive function with a constant.

A function $F:\mathbb{R}^n_+\to\mathbb{R}$ is separately affine additive if for any $i\in\{1,\dots, n\}$ and any $a_1,\dots,a_n\in\mathbb{R}_+$, the function
\[
    x\mapsto F(a_1,\dots,a_{i-1},x,a_{i+1},\dots,a_n)
\]
is affine additive.
\end{defin}

These functions are essential because they codify all the valuation information. 

\begin{lema}\label{lema:ident.tras}
Let $V:\Pa(\{e_i\}_{i=1}^n)\to\mathbb{R}$ be a translation invariant mapping and let $F:\mathbb{R}^n_+\to \mathbb{R}$ be defined as
\[
    F(a_1,\dots,a_n)=V\left(\sum_{i=1}^n [0,a_i]e_i\right)
\]
for $a_1,\dots,a_n\geq 0$. Then $V$ is a valuation if and only if $F$ is separately affine additive.
\end{lema}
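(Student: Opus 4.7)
The plan is to use Lemma \ref{lema:unioncuad} as the geometric hinge in both directions: it restricts the pairs $K_1, K_2$ with $K_1\cup K_2 \in \Pa(\{e_i\}_{i=1}^n)$ to those differing in a single coordinate direction, which matches the ``separately'' in ``separately affine additive''. Translation invariance of $V$ will be used throughout to reduce every parallelotope appearing in the valuation identity to the canonical form $\sum_{i=1}^n[0,c_i]e_i$ on which $F$ is defined.

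For the forward implication, fix an index $i$ and values $a_j \geq 0$ for $j \neq i$. Given $x, y \geq 0$, I would consider
\[
K_1 = \sum_{j\neq i}[0,a_j]e_j + [0,x]e_i,\qquad K_2 = \sum_{j\neq i}[0,a_j]e_j + [x, x+y]e_i.
\]
Their union replaces the $i$-th interval by $[0, x+y]$, while their intersection is the degenerate parallelotope with $i$-th interval $\{x\}$. Applying $V(K_1)+V(K_2) = V(K_1\cup K_2) + V(K_1\cap K_2)$ and using translation invariance to move $\{x\}e_i \mapsto \{0\}e_i$ in the intersection and $[x,x+y]e_i \mapsto [0,y]e_i$ in $K_2$, one obtains
\[
F(\ldots, x, \ldots) + F(\ldots, y, \ldots) = F(\ldots, x+y, \ldots) + F(\ldots, 0, \ldots),
\]
with $x, y, x+y, 0$ in the $i$-th slot and $a_j$ in the others. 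Since $i$ is arbitrary, this is the separately affine additive property.

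For the converse, suppose $F$ is separately affine additive and let $K_1, K_2 \in \Pa(\{e_i\}_{i=1}^n)$ with $K_1\cup K_2 \in \Pa(\{e_i\}_{i=1}^n)$. By Lemma \ref{lema:unioncuad}, either one is contained in the other (and the identity is trivial) or they coincide except in some coordinate $k$, where the intervals overlap without either containing the other. Using translation invariance I would normalize so that the common intervals are $[0, b_j]$ for $j\neq k$ and the $k$-th intervals are $[0, b]$ and $[c, d]$ with $0 \leq c \leq b \leq d$; then the $k$-th interval of $K_1\cup K_2$ is $[0, d]$ and of $K_1\cap K_2$ is $[c, b]$. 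Writing $f(x) := F(b_1, \ldots, x, \ldots, b_n)$ with $x$ in the $k$-th slot, and translating once more to bring $[c, d]$ and $[c, b]$ into canonical form, the valuation identity reduces to
\[
f(b) + f(d - c) = f(d) + f(b - c),
\]
which follows from affine additivity since $f(d) - f(d-c) = f(c) - f(0) = f(b) - f(b-c)$. No deep obstacle arises: the main delicacy is bookkeeping the translations so that $V$ on each of the four parallelotopes is read off from $F$ correctly, after which Lemma \ref{lema:unioncuad} collapses the multi-dimensional valuation identity to a one-dimensional affine-additivity check.
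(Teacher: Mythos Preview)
Your proposal is correct and follows essentially the same argument as the paper: the forward direction uses the identical decomposition $[0,x]\cup[x,x+y]$ in a single coordinate, and the converse direction invokes Lemma~\ref{lema:unioncuad} and reduces to the same one-variable affine-additive identity. Your upfront normalization (translating so the common intervals are $[0,b_j]$ and one of the $k$-th intervals is $[0,b]$) is in fact a slightly cleaner bookkeeping choice than the paper's, which keeps general endpoints and applies translation invariance term by term, but the content is identical.
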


\begin{proof}
Let $V$ be a translation invariant valuation and let $F$ be defined as
\[
    F(a_1,\dots,a_n)= V\left(\sum_{i=1}^n[0,a_i]e_i\right)
\]
for any $a_1,\dots,a_n\geq 0$. Let $j\in\{1,\ldots,n\}$, fix $a_1,\dots,a_{j-1},a_{j+1},\ldots,a_n\geq 0$ and consider the function $f_j(x)= F(a_1,\dots,a_{j-1},x,a_{j+1},\ldots,a_n)$ for $x\geq 0$. To verify that $f_j$ is affine additive, let $x,y\geq 0$ and denote $P_j=\sum_{i\neq j}[0,a_i]e_i$. By the valuation property of $V$,
\begin{align*}
    f_j(x+y)&=V\left([0,x+y]e_j+P_j\right)\\
    &=V\left([0,x]e_j+P_j\right)+V\left([x,x+y]e_j+P_j\right)-V\left(\{xe_j\}+P_j\right).
\end{align*}
Now, since $V$ is translation invariant,
\begin{align*}
    f_j(x+y)&=V\left([0,x]e_j+P_j\right)+V\left([0,y]e_j+P_j\right)-V\left(\{0\}+P_j\right)\\
    &=f_j(x)+f_j(y)-f_j(0).
\end{align*}
Thus, $F$ is separately affine additive.

Conversely, let $F$ be defined as above and suppose that it is separately affine additive. In order for $V$ to be a valuation, it needs to satisfy
\[
    V(K_1)+V(K_2)=V(K_1\cup K_2)+V(K_1\cap K_2)
\]
whenever $K_1\cup K_2,K_1,K_2\in\Pa(\{e_i\}_{i=1}^n)$. Now, if $K_1\subset K_2$ or $K_2\subset K_1$ the equality is trivially obtained. By Lemma \ref{lema:unioncuad}, without loss of generality, we may assume that $K_1$ and $K_2$ coincide in all dimensions except the first one. Let $a_i\leq b_i$ for $i\neq 1$, $a_1^1\leq b_1^1$ and $a_1^2\leq b_1^2$ be real numbers such that
\[
    K_j=[a_1^j,b_1^j]e_1+ \sum_{i=2}^n[a_i,b_i]e_i \quad\text{for }j=1,2,
\]
and such that $K_1\cap K_2\neq\emptyset$. Since $K_1\not\subset K_2$ and $K_2\not\subset K_1$, me may assume without loss of generality that $a_1^1<a_1^2\leq b_1^1<b_1^2$. Therefore,
\[
    K_1\cup K_2=[a_1^1,b_1^2]e_1+\sum_{i=2}^n[a_i,b_i]e_i\quad\text{and}\quad K_1\cap K_2=[a_1^2,b_1^1]e_1+\sum_{i=2}^n[a_i,b_i]e_i.
\]
Thus, by the translation invariant property, 
\begin{align*}
    V(K_1\cup K_2)&=V\left([0,b_1^2-a_1^1]e_1+\sum_{i=2}^n[0,b_i-a_i]e_i\right)\\
    &=F(b_1^2-a_1^1,b_2-a_2,\dots,b_n-a_n),\\
    V(K_1\cap K_2)&=V\left([0,b_1^1-a_1^2]e_1+\sum_{i=2}^n[0,b_i-a_i]e_i\right)\\
    &=F(b_1^1-a_1^2,b_2-a_2,\dots,b_n-a_n).
\end{align*}
For fixed $a_i$ and $b_i$ for $i\geq 2$, let $f$ be defined as
\[
    f(x)= F(x,b_2-a_2,\dots,b_n-a_n),
\]
which yields 
\[
    V(K_1\cup K_2)+V(K_1\cap K_2)=f(b_1^2-a_1^1)+f(b_1^1-a_1^2).
\]
Since $f$ is affine additive, we have
\begin{align*}
    f(b_1^2-a_1^1)+f(a_1^1)-f(0)=f(b_1^2),\\
    f(b_1^1-a_1^2)+f(a_1^2)-f(0)=f(b_1^1).
\end{align*}
Therefore,
\begin{align*}
    V(K_1\cup K_2)+V(K_1\cap K_2)&=f(b_1^2)-f(a_1^1)+f(0) +f(b_1^1)-f(a_1^2)+f(0)\\
    &=f(b_1^2)-f(a_1^2)+f(0) +f(b_1^1)-f(a_1^1)+f(0)\\
    &=V(K_2)+V(K_1).
\end{align*}
\end{proof}

We show next that separately affine additive functions can be written as a sum of separately additive functions of different dimensions, see Proposition \ref{teo:descompadit}. To prove that, first we will show the following lemma.

\begin{lema}\label{lema:descomp1}
Let $F:\mathbb{R}_+^n\to\mathbb{R}$ be a separately affine additive function. Then, there exists a separately additive function $G:\mathbb{R}^{n}_+\to\mathbb{R}$ and, for every set $I\subset\{1,\dots,n\}$ with $I\neq \{1,\dots,n\}$, there is a separately affine additive function $F_I:\mathbb{R}_+^{|I|}\to\mathbb{R}$ (where $F_\emptyset$ is simply a constant) such that

\[
    F(a_1,\dots,a_n)=G((a_i)_{i\in\{1,\ldots,n\}})+\sum_{\substack{
         I\subset \{1,\dots,n\}  \\
         I\neq \{1,\dots,n\} 
    }} F_I((a_i)_{i\in I}).
\]
\end{lema}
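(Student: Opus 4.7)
The plan is to construct the genuinely separately additive piece $G$ by a Möbius–style inclusion–exclusion over the coordinates that are ``zeroed out'', and then read off the proper–subset remainders as the functions $F_I$.

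Concretely, for each $i\in\{1,\dots,n\}$ I would introduce the operator $T_i$ acting on functions $F:\mathbb{R}_+^n\to\mathbb{R}$ by
\[
    (T_iF)(a_1,\dots,a_n)=F(a_1,\dots,a_n)-F(a_1,\dots,a_{i-1},0,a_{i+1},\dots,a_n).
\]
For any fixed values of the remaining coordinates, the $i$-th section of $T_iF$ is $x\mapsto f(x)-f(0)$, where $f$ is the corresponding section of $F$; since $f$ is affine additive, $T_iF$ becomes truly additive in the $i$-th variable. Moreover $T_i$ is linear as an operator on functions, and substituting $0$ in the $i$-th argument preserves affine additivity in each of the other coordinates. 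Hence $T_i$ preserves separate affine additivity in the indices $j\neq i$ while upgrading affine additivity to additivity in the $i$-th index.

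I would then set $G=T_nT_{n-1}\cdots T_1F$. Iterating the previous observations, $G$ is separately additive in every variable. Expanding the operators produces the inclusion–exclusion formula
\[
    G(a_1,\dots,a_n)=\sum_{S\subset\{1,\dots,n\}}(-1)^{n-|S|}F(a^S),
\]
where $a^S\in\mathbb{R}_+^n$ is defined by $a^S_i=a_i$ if $i\in S$ and $a^S_i=0$ otherwise. Isolating the $S=\{1,\dots,n\}$ term, which equals $F(a)$, gives
\[
    F(a)=G(a)-\sum_{I\subsetneq\{1,\dots,n\}}(-1)^{n-|I|}F(a^I).
\]

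Each summand on the right depends only on $(a_i)_{i\in I}$, so defining $F_I((a_i)_{i\in I})=-(-1)^{n-|I|}F(a^I)$, viewed as a function of the coordinates indexed by $I$, yields the required $F_I:\mathbb{R}_+^{|I|}\to\mathbb{R}$ (with the case $I=\emptyset$ reducing to a constant). The function $F_I$ inherits separate affine additivity from $F$, because fixing the coordinates outside $I$ to $0$ produces a section that is still affine additive in each surviving variable, and the global sign is irrelevant. The only real obstacle is bookkeeping: checking that the operators $T_i$ commute (each only modifies the dependence on its own coordinate) and that the composition $T_n\cdots T_1$ does expand with the claimed sign pattern. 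Both facts reduce to a short induction on $n$ and to the elementary identity that affine additivity in one variable is preserved under substitution of constants in the others.
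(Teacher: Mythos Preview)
Your proposal is correct and follows essentially the same approach as the paper: both construct $G$ via the inclusion--exclusion sum $\sum_{S}(-1)^{n-|S|}F(a^S)$ (the paper's $G$ differs from yours only by the global sign $(-1)^n$) and then read off the $F_I$ as the proper-subset terms. The only difference is presentational: you package the construction through the commuting operators $T_i=\mathrm{Id}-E_i$ and deduce separate additivity of $G$ from the fact that each $T_i$ upgrades affine additivity to additivity in the $i$-th slot, whereas the paper writes down the alternating sum directly and verifies additivity in one variable by splitting the sum according to whether the index belongs to $S$.
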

\begin{proof}
For any subset $I\subset \{1,\dots,n\}$, we denote 
\[
    a^I=(a_i^I)_{i=1}^n \quad \text{where }a^I_i=\left\{\begin{array}{ll}
        a_i & \text{if }i\in I \\
        0 & \text{if }i\not\in I.
    \end{array}  \right.
\]
Define $G$ as follows:
\[
    G(a_1,\dots,a_n)= \sum_{I\subset\{1,\dots,n\}}(-1)^{|I|}F(a^I).
\]
Let us check that $G$ is a separately additive function. Due to symmetry, it suffices to prove additivity in the first variable $a_1$. Fix $a_2,\dots,a_n$. For any subset $J\subset \{2,\dots,n\}$, denote
\[
    b^J=(a^J_j)_{j=2}^n\quad \text{where }a_j^J=\left\{\begin{array}{ll}
        a_j & \text{if }j\in J \\
        0 & \text{if }j\not\in J.
    \end{array}  \right.
\]
With this notation, we can rewrite $G$ as
\[
    G(a_1,\dots,a_n)=\sum_{J\subset\{2,\dots,n\}} (-1)^{|J|}F(0,b^J) +  \sum_{J\subset\{2,\dots,n\}} (-1)^{|J|+1}F(a_1,b^J).
\]
To verify additivity in $a_1$, consider $G(a_1+b_1,a_2,\dots,a_n)$:
\begin{align*}
    G(a_1+b_1,\dots,a_n)&=\!\sum_{J\subset\{2,\dots,n\}}\!(-1)^{|J|}F(0,b^J)\! +\!  \sum_{J\subset\{2,\dots,n\}}\! (-1)^{|J|+1}F(a_1+b_1,b^J)\\
    &=\sum_{J\subset\{2,\dots,n\}} (-1)^{|J|}F(0,b^J) + \\
    &\quad +\sum_{J\subset\{2,\dots,n\}} (-1)^{|J|+1}\left(F(a_1,b^J)+F(b_1,b^J)-F(0,b^J)\right)\\
    &=G(a_1,\dots,a_n)+G(b_1,\dots,a_n).
\end{align*}

Thus, $G$ is separately additive. Now, we get that
\[
    F(a_1,\dots,a_n)=(-1)^nG(a_1,\dots,a_n)-\sum_{\substack{
         I\subset\{1,\dots,n\}  \\
           I\neq\{1,\dots,n\}}    }(-1)^{|I|+n}F(a^I),
\]
which is the representation we were looking for, since $F(a^I)$ are also separately affine additive functions.
\end{proof}

\begin{prop}\label{teo:descompadit}
A mapping $F:\mathbb{R}_+^n\to\mathbb{R}$ is a separately affine additive function if and only if for every subset $I\subset\{1,\dots,n\}$ there is a separately additive function $G_I:\mathbb{R}_+^{|I|}\to\mathbb{R}$ (where $G_\emptyset$ is a constant) such that
\[
    F(a_1,\dots,a_n)=\sum_{\begin{array}{c}
         I\subset \{1,\dots,n\}  \\
    \end{array}} G_I((a_i)_{i\in I}).
\]
Moreover, this decomposition is unique, that is, given two possible decompositions $\{G_I\}$ and $\{G'_{I}\}$ for $F$, one has that $G_I=G_I'$ for any $I\subset\{1,\dots,n\}$.
\end{prop}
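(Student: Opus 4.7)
The plan is to reduce both implications and the uniqueness statement to Lemma \ref{lema:descomp1}, exploiting the fact that separately additive functions vanish whenever any argument is zero. The ``if'' implication is straightforward: extending a separately additive $G_I:\mathbb{R}_+^{|I|}\to\mathbb{R}$ to $\mathbb{R}_+^n$ by ignoring the coordinates outside $I$ yields a separately affine additive function, since in coordinates inside $I$ the extension is additive and any additive $f:\mathbb{R}_+\to\mathbb{R}$ satisfies $f(0)=0$ (hence $f(x+y)=f(x)+f(y)-f(0)$), while in coordinates outside $I$ the extension is constant, and constants are trivially affine additive. Because the class of separately affine additive functions is closed under finite sums, $\sum_I G_I$ is separately affine additive.

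For the converse, I would argue by induction on $n$, the case $n=0$ being a single constant. For the inductive step, apply Lemma \ref{lema:descomp1} to write
\[
    F(a_1,\dots,a_n) = G(a_1,\dots,a_n) + \sum_{I\subsetneq\{1,\dots,n\}} F_I((a_i)_{i\in I}),
\]
with $G$ separately additive on $\mathbb{R}_+^n$ and each $F_I$ separately affine additive on $\mathbb{R}_+^{|I|}$. Since each $F_I$ depends on strictly fewer than $n$ variables, the inductive hypothesis gives $F_I = \sum_{J\subset I} G_J^I$ with each $G_J^I$ separately additive. Setting $G_{\{1,\dots,n\}} := G$ and, for each $J\subsetneq\{1,\dots,n\}$,
\[
    G_J := \sum_{\substack{I:\,J\subset I\\ I\subsetneq\{1,\dots,n\}}} G_J^I,
\]
I obtain the desired decomposition: each $G_J$ is a finite sum of separately additive functions in the same set of variables, hence is itself separately additive.

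For uniqueness, suppose $\sum_I G_I = \sum_I G_I' = F$ and set $H_I := G_I - G_I'$, so that $\sum_I H_I((a_i)_{i\in I}) \equiv 0$ on $\mathbb{R}_+^n$. The crucial observation is that every separately additive function vanishes whenever any of its arguments equals $0$, since additive functions on $\mathbb{R}_+$ satisfy $f(0)=0$. Fixing $J\subset\{1,\dots,n\}$ and setting $a_i=0$ for $i\notin J$ kills every $H_I$ with $I\not\subset J$, leaving
\[
    \sum_{I\subset J} H_I((a_i)_{i\in I}) = 0
\]
for all $(a_i)_{i\in J}\in\mathbb{R}_+^{|J|}$. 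A simple induction on $|J|$ now forces $H_J\equiv 0$: the base $J=\emptyset$ gives $H_\emptyset = 0$, and the inductive step isolates $H_J$ once all $H_I$ with $I\subsetneq J$ are known to vanish. The main content is already packaged in Lemma \ref{lema:descomp1}; the remaining steps are routine bookkeeping on the Boolean lattice, so no serious obstacle is anticipated.
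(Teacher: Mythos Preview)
Your proposal is correct and follows essentially the same route as the paper: the forward implication is the same induction on $n$ via Lemma \ref{lema:descomp1}, the converse is the same observation that additive functions are affine additive and the class is closed under sums (the paper spells out the computation explicitly rather than invoking closure), and uniqueness is the same evaluation-at-zeros induction on $|J|$. No gaps.
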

\begin{proof}

For the direct implication, we will prove it by induction on the dimension. For $n=1$, let $F:\mathbb{R}_+\to\mathbb{R}$ be an affine additive function. By Lemma \ref{lema:descomp1},
there exists an additive function $G:\mathbb{R}_+\to\mathbb{R}$ and a constant $C\in\mathbb{R}$ such that
\[
    F(x)=G(x)+C,
\]
which is the representation we are looking for.

Assume the statement holds for dimension $n-1$. Let $F:\mathbb{R}^n_+\to\mathbb{R}$ be defined as a separately affine additive function. By Lemma \ref{lema:descomp1}, there exist $G$ separately additive function and $F_I$ separately affine additive functions, such that
\[
    F(a_1,\dots,a_n)=G((a_i)_{i\in\{1,\ldots,n\}})+\sum_{\substack{
         I\subset\{1,\dots,n\}  \\
           I\neq\{1,\dots,n\}}} F_I((a_i)_{i\in I}).
\]

Applying the induction hypothesis to each $F_I$, we deduce that every $F_I$ is a sum of separately additive functions. Consequently, $F$ itself is a sum of separately additive functions of the different dimensions. Since the sum of separately additive functions is again separately additive, this completes the induction.

Conversely, for any $I\subset\{1,\dots,n\}$, let $G_I:\mathbb{R}^{|I|}_+\to\mathbb{R}$ be separately additive functions (with $G_\emptyset$ a constant). Define
\[
    F(a_1\dots,a_n) = \sum_{\begin{array}{c}
         I\subset \{1,\dots,n\}  \\
    \end{array}} G_I((a_i)_{i\in I}),
\]
and let us check that $F$ is separately affine additive. Due to symmetry, we only need to prove that it is affine additive for the first variable $a_1$. Fix $a_2,\dots,a_n$ and, for any subset $J\subset \{2,\dots,n\}$, we denote $\alpha^J=(a_i)_{i\in J}$. 
Computing explicitly,
\begin{align*}
    F(a_1&+b_1,a_2,\dots,a_n) =\sum_{J\subset \{2,\dots,n\} }G_{\{1\}\cup J}(a_1+b_1,\alpha^J)+\sum_{J\subset\{2,\dots,n\}}G_J(\alpha^J)\\
    &= \sum_{J\subset \{2,\dots,n\} }G_{\{1\}\cup J}(a_1,\alpha^J)+\sum_{J\subset \{2,\dots,n\} }G_{\{1\}\cup J}(b_1,\alpha^J)   +\sum_{J\subset\{2,\dots,n\}}G_J(\alpha^J)\\
    &= \sum_{J\subset \{2,\dots,n\} }G_{\{1\}\cup J}(a_1,\alpha^J)+\sum_{J\subset\{2,\dots,n\}}G_J(\alpha^J)+\\
    &\,\,\,\,\,\,\,+ \sum_{J\subset \{2,\dots,n\} }G_{\{1\}\cup J}(b_1,\alpha^J)   +\sum_{J\subset\{2,\dots,n\}}G_J(\alpha^J)-  \sum_{J\subset\{2,\dots,n\}}G_J(\alpha^J)\\
    &=F(a_1,\dots,a_n)+F(b_1,\dots,a_n)-\sum_{J\subset\{2,\dots,n\}}G_J(\alpha^J).
\end{align*}
Now, notice that 
\begin{align*}
    F(0,a_2,\dots,a_n)& =\sum_{J\subset \{2,\dots,n\} }G_{\{1\}\cup J}(0,\alpha^J) +\sum_{J\subset\{2,\dots,n\}}G_J(\alpha^J)\\
    &=\sum_{J\subset\{2,\dots,n\}}G_J(\alpha^J),
\end{align*}
since for any separately additive function, one has that
\[
    G_{\{1\}\cup J}(0,\alpha^J)=G_{\{1\}\cup J}(0+0,\alpha^J)=G_{\{1\}\cup J}(0,\alpha^J)+G_{\{1\}\cup J}(0,\alpha^J).
\]

Finally, let us check uniqueness: Let $\{G_I\}$ and $\{G_I'\}$ be two decompositions of $F$ such that
\[
    \sum_{I\subset\{1,\dots,n\}}G_I((a_i)_{i\in I})=F(a_1,\dots,a_n)=\sum_{I\subset\{1,\dots,n\}}G_I'((a_i)_{i\in I})
\]
for any $a_i\geq 0$. We will prove the equality $G_I=G_I'$ by induction on the number of elements of $I$. Taking $a_i=0$ for all $i$, 
\[
    G_\emptyset=F(0,\dots,0)=G'_\emptyset.
\]
Now, assume that $G_I=G_I'$ for any $I\subset\{1,\dots,n\}$ such that $|I|=m-1$ with $m\leq n$ and let us prove that $G_I=G_I'$ if $|I|=m$. We may assume with no loss of generality that $I=\{1,\dots,m\}$. We have that
\[
    \sum_{J\subset I}G_J((a_i)_{i\in J})=F(a_1,\dots,a_m,0,\dots,0)=\sum_{J\subset I}G'_J((a_i)_{i\in J}).
\]
Isolating the term corresponding to $J=I$ on both sides
\[
    G_I((a_i)_{i\in I})+\sum_{\substack{J\subset I\\ J\neq I}}G_J((a_i)_{i\in J})=G_I'((a_i)_{i\in I})+\sum_{\substack{J\subset I\\ J\neq I}}G_J'((a_i)_{i\in J}),
\]
and, by the induction hypothesis, for any $J\subset I$ with $J\neq I$ we get that $G_J=G_J'$. Thus,
\[
    G_I((a_i)_{i\in I})=G_I'((a_i)_{i\in I})
\]
for any $a_i\geq 0$, as claimed.
\end{proof}

Combining Lemma \ref{lema:ident.tras} and Proposition \ref{teo:descompadit} we obtain the following characterization of translation invariant valuations on $\Pa(\{e_i\}_{i=1}^n)$.

\begin{teorema}\label{t:McMullen-pa}
If a mapping $V:\Pa(\{e_i\}_{i=1}^n)\to\mathbb{R}$ a translation invariant valuation, then for any $I\subset\{1,\dots,n\}$ there is a separately additive function $G_I:\mathbb{R}_+^{|I|}\to\mathbb{R}$ such that
\[
    V\left(\sum_{i=1}^n[0,a_i]e_i\right)=\sum_{I\subset\{1,\dots,n\}}G_I((a_i)_{i\in I}).
\]
Conversely, given a family of separately additive functions $\{G_I\}$ for any $I\subset\{1,\dots,n\}$ such that $G_I:\mathbb{R}^{|I|}_+\to\mathbb{R}$, then the function $V:\Pa(\{e_i\}_{i=1}^n)\to\mathbb{R}$ defined as
\[
    V\left(\sum_{i=1}^n [a_i,b_i]e_i\right)= \sum_{I\subset\{1,\dots,n\}}G_I\left((b_i-a_i)_{i\in I}\right)
\]
is a translation invariant valuation. Moreover, the decomposition as a sum of separately additive functions is unique.
\end{teorema}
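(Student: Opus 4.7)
The plan is to derive Theorem \ref{t:McMullen-pa} essentially as a direct corollary, by assembling Lemma \ref{lema:ident.tras} with Proposition \ref{teo:descompadit}; no genuinely new argument is needed.

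For the forward direction, I would start from a translation invariant valuation $V:\Pa(\{e_i\}_{i=1}^n)\to\mathbb{R}$ and define the associated function $F:\mathbb{R}_+^n\to\mathbb{R}$ by
\[
    F(a_1,\dots,a_n)=V\left(\sum_{i=1}^n[0,a_i]e_i\right).
\]
By Lemma \ref{lema:ident.tras}, $F$ is separately affine additive. Proposition \ref{teo:descompadit} then yields a family of separately additive functions $G_I:\mathbb{R}_+^{|I|}\to\mathbb{R}$ (with $G_\emptyset$ a constant) for each $I\subset\{1,\dots,n\}$ such that
\[
    F(a_1,\dots,a_n)=\sum_{I\subset\{1,\dots,n\}}G_I((a_i)_{i\in I}),
\]
which is exactly the formula in the statement.

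For the converse, given any family $\{G_I\}_{I\subset\{1,\dots,n\}}$ of separately additive functions, I would first use the ``if'' direction of Proposition \ref{teo:descompadit} to conclude that
\[
    F(a_1,\dots,a_n):=\sum_{I\subset\{1,\dots,n\}}G_I((a_i)_{i\in I})
\]
is separately affine additive on $\mathbb{R}_+^n$. Then I would define $V$ on a general parallelotope by
\[
    V\left(\sum_{i=1}^n[a_i,b_i]e_i\right):=F(b_1-a_1,\dots,b_n-a_n),
\]
which depends only on the edge lengths $b_i-a_i$ and is therefore translation invariant by construction. Since the function associated to this $V$ as in Lemma \ref{lema:ident.tras} is precisely $F$, which is separately affine additive, the ``if'' direction of Lemma \ref{lema:ident.tras} shows that $V$ is a valuation.

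Uniqueness of the decomposition is inherited directly from the uniqueness statement in Proposition \ref{teo:descompadit}, applied to $F$. There is no real obstacle here: the only step requiring any verification is that the definition of $V$ on general parallelotopes really is translation invariant, but this is immediate from the fact that $F$ is evaluated on the differences $b_i-a_i$, so the bulk of the work is already contained in the two preceding results.
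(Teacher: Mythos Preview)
Your proposal is correct and follows precisely the paper's own approach: the paper states Theorem \ref{t:McMullen-pa} immediately after Proposition \ref{teo:descompadit} with the single sentence ``Combining Lemma \ref{lema:ident.tras} and Proposition \ref{teo:descompadit} we obtain the following characterization,'' and gives no further proof. Your write-up simply makes explicit the straightforward way these two results are combined, including the uniqueness clause.
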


\begin{rem}\label{rem:decomposition continuous}
Note that, following the process given in the uniqueness proof of Proposition \ref{teo:descompadit}, for any $I\subset\{1,\dots,n\}$ one can write the $G_I((a_{i})_{i\in I})$ as the sums and subtractions of terms with $V$. Simply note that
\[
    G_\emptyset=V(\{0\})
\]
and for any $I$ such that $|I|>0$ one has that
\[
    G_I((a_i)_{i\in I})=V\left(\sum_{i\in I}[0,a_i]e_i\right)-\sum_{\substack{J\subset I\\J\neq I}}G_J((a_i)_{i\in J}).
\]
Applying induction on the size of $I$, we get that $G_I$ can be written as the sum and subtraction of terms that involve $V$ over sums of intervals. 
From this, it is clear to see that $V$ is continuous on $\Pa(\{e_i\}_{i=1}^n)$ if and only if all $G_I$ are continuous on $\mathbb{R}^{|I|}_+$. In particular, it is enough for them to be separately continuous since
\[
    \sum_{i=1}^n[0,a_i(j)]e_i\xrightarrow[j\to\infty]{}\sum_{i=1}^n[0,b_i]e_i
\]
in $\Pa(\{e_i\}_{i=1}^n)$ if and only if $a_i(j)\xrightarrow[j\to\infty]{}b_i$ in $\mathbb{R}_+$ for all $i$.
\end{rem}

Theorem \ref{t:McMullen-pa} can be interpreted as a version of McMullen's Theorem on decomposition for valuations on polytopes (see \cite{Alesker}). We characterize the homogeneous parts of the decomposition as separately additive functions of different dimensions, specifically when working within $\Pa(\{e_i\}_{i=1}^n)$. This formulation serves as a substitute for McMullen's Theorem and we will use Theorem \ref{t:McMullen-pa} in the proof of Theorem \ref{t:main}. This also leads to the following automatic continuity result. 

\begin{coro}\label{c:automatic continuity}
Let $V:\Pa(\{e_i\}_{i=1}^n)\to\mathbb{R}$ be a translation invariant valuation. If $V$ is bounded on bounded sets (that is $\sup_{K\subset B}|V(K)|<\infty$), then $V$ is continuous. 
\end{coro}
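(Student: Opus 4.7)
The plan is to combine the decomposition provided by Theorem \ref{t:McMullen-pa} with the automatic continuity result for separately additive functions (Lemma \ref{lema:multiseparada}), and finally invoke Remark \ref{rem:decomposition continuous} to pass from continuity of the components back to continuity of $V$.

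First, I would apply Theorem \ref{t:McMullen-pa} to obtain a family of separately additive functions $\{G_I\}_{I\subset\{1,\dots,n\}}$ with $G_I:\mathbb{R}_+^{|I|}\to\mathbb{R}$ such that
\[
    V\left(\sum_{i=1}^n[a_i,b_i]e_i\right)=\sum_{I\subset\{1,\dots,n\}}G_I((b_i-a_i)_{i\in I}).
\]
The goal is to show that each $G_I$ is bounded on a set with nonempty interior in $\mathbb{R}_+^{|I|}$, so that Lemma \ref{lema:multiseparada} (after extending $G_I$ to all of $\mathbb{R}^{|I|}$ using separate additivity) forces each $G_I$ to be separately linear, hence continuous.

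The key step is propagating the boundedness hypothesis on $V$ to the components $G_I$. Using the recursive formula recalled in Remark \ref{rem:decomposition continuous}, namely
\[
    G_I((a_i)_{i\in I})=V\!\left(\sum_{i\in I}[0,a_i]e_i\right)-\sum_{\substack{J\subset I\\J\neq I}}G_J((a_i)_{i\in J}),
\]
I would argue by induction on $|I|$ that each $G_I$ is bounded on a neighborhood of the origin: choose $\delta>0$ small enough so that every parallelotope of the form $\sum_{i\in J}[0,a_i]e_i$ with $0\le a_i\le\delta$ and $J\subseteq\{1,\ldots,n\}$ lies inside $B$. Since such parallelotopes belong to $\mathcal{K}(B)$, the hypothesis gives $|V(\sum_{i\in J}[0,a_i]e_i)|\le M$ uniformly. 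The base case $G_\emptyset = V(\{0\})$ is trivial, and the inductive step follows directly from the displayed identity, as a finite sum of bounded quantities is bounded. Thus $G_I$ is bounded on $[0,\delta]^{|I|}$, a set with nonempty interior (relative to $\mathbb{R}^{|I|}$, after extending $G_I$ by separate additivity to negative arguments, which a standard trick makes possible since separate additivity forces $G_I$ to agree with a separately additive extension uniquely determined on all of $\mathbb{R}^{|I|}$).

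Applying Lemma \ref{lema:multiseparada} then yields that every $G_I$ is separately linear, in particular (jointly) continuous on $\mathbb{R}_+^{|I|}$. By Remark \ref{rem:decomposition continuous}, continuity of all the $G_I$ is equivalent to continuity of $V$ on $\Pa(\{e_i\}_{i=1}^n)$, which completes the proof. The main obstacle is the bookkeeping in the inductive step that shows the components inherit the boundedness; once that is handled, Lemma \ref{lema:multiseparada} does the analytic work.
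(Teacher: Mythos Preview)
Your proposal is correct and follows essentially the same route as the paper's proof: decompose via Theorem \ref{t:McMullen-pa}, use the recursive formula from Remark \ref{rem:decomposition continuous} to induct on $|I|$, extend each $G_I$ from $\mathbb{R}_+^{|I|}$ to $\mathbb{R}^{|I|}$, apply Lemma \ref{lema:multiseparada}, and conclude with Remark \ref{rem:decomposition continuous}. The only cosmetic difference is that the paper takes ``$G_J$ is continuous'' as the inductive invariant (deducing boundedness of $G_I$ from continuity of the smaller $G_J$'s and then immediately upgrading to continuity via the lemma at each step), whereas you induct directly on ``$G_J$ is bounded on $[0,\delta]^{|J|}$'' and apply the lemma to all components at the end; both versions work equally well.
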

\begin{proof}
By Theorem \ref{t:McMullen-pa}, there exist $\{G_I\}_{I\subset\{1,\dots,n\}}$ separately additive functions such that
\[
    V\left(\sum_{i=1}^n[0,a_i]e_i\right)=\sum_{I\subset\{1,\dots,n\}}G_I\left((a_i)_{i\in I}\right),
\]
for any $a_i\geq 0$. First, we show that the fact that $V$ is bounded on bounded sets implies that all $G_I$ are continuous. Clearly, $G_\emptyset$ is continuous, since it is a constant, so we will proceed by induction on the size of $I$. Assume that for any $I\subset\{1,\dots,n\}$ such that $|I|\leq m-1$ we have proved that $G_I$ is continuous. For any subset $I\subset\{1,\dots,n\}$ with $|I|=m$, one has
\[
    V\left(\sum_{i\in I}[0,a_i]e_i\right)=\sum_{J\subset I}G_J((a_i)_{i\in J})=G_I((a_i)_{i\in I})+\sum_{\substack{J\subset I\\ J\neq I}}G_J((a_i)_{i\in J})
\]
for any $a_i\geq 0$. Since $V$ is bounded on bounded sets, and $G_J$ is continuous for every $J\subset I$ with $J\neq I$, it follows that $G_I$ has to be bounded on the positive part of the ball $B_+$. Let us extend $G_I$ to the whole $\mathbb{R}^{|I|}$ as an odd function $\hat{G}_I$, that is
\[
    \hat{G}_I((\varepsilon_i a_i)_{i\in I})=\left(\prod_{i\in I}\varepsilon_i\right)G_I((a_i)_{i\in I})
\]
where $\varepsilon_i=\pm1$. It is straightforward to check that $\hat{G}_I$ is separately additive on $\mathbb{R}^{|I|}$, and bounded on $B$. By Lemma \ref{lema:multiseparada} we get that $\hat{G}_I$ is separately linear which implies that it is continuous. Since $G_I$ is merely the restriction of $\hat{G}_I$, we have that $G_I$ is also continuous. As we mentioned in Remark \ref{rem:decomposition continuous}, the fact that all $G_I$ are continuous implies that $V$ is continuous. 
\end{proof}

Note that, in the previous result, the role of $B$ can be exchanged for any set with nonempty interior and, similarly, the Borel condition yields an analogous corollary. In particular, we obtain the following result, which mirrors Theorem \ref{teo:CFE}. 


\begin{teorema}\label{teo:valuaciones_continuidad}[Automatic continuity]
Let $V:\Pa(\{e_i\}_{i=1}^n)\to\mathbb{R}$ be a translation invariant valuation. The following are equivalent:
\begin{itemize}
    \item[(1)] $V$ is continuous
    \item[(2)] $V$ is bounded on all parallelotopes inside of a set with nonempty interior.
    \item[(3)] $V$ is Borel.
\end{itemize}
\end{teorema}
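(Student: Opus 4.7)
The plan is to prove the cycle $(1) \Rightarrow (2)$, $(1) \Rightarrow (3)$, $(2) \Rightarrow (1)$, and $(3) \Rightarrow (1)$. The implications departing from (1) are immediate: continuity implies Borel measurability, and the family of parallelotopes contained in any fixed bounded subset of $\mathbb{R}^n$ is compact in the Hausdorff metric (it is closed and bounded in $\mathcal{K}^n$), so a continuous $V$ is automatically bounded on it.

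For $(2) \Rightarrow (1)$ I would mimic the proof of Corollary \ref{c:automatic continuity} almost verbatim, exploiting the flexibility noted in the remark immediately following it: the closed unit ball $B$ can be replaced by any set $U$ with nonempty interior. Apply Theorem \ref{t:McMullen-pa} to write $V\bigl(\sum_{i=1}^n[0,a_i]e_i\bigr)=\sum_{I\subset\{1,\dots,n\}} G_I((a_i)_{i\in I})$. Proceed by induction on $|I|$: the formula in Remark \ref{rem:decomposition continuous} expresses $G_I$ as a finite combination of values of $V$ together with the previously controlled $G_J$ with $|J|<|I|$, so boundedness of $V$ on the parallelotopes inside $U$ translates to boundedness of each $G_I$ on a nonempty open subset of $\mathbb{R}_+^{|I|}$. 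Extending oddly to $\hat{G}_I$ on $\mathbb{R}^{|I|}$, Lemma \ref{lema:multiseparada} yields separate linearity and hence continuity of every $G_I$, whereupon $V$ is continuous by Remark \ref{rem:decomposition continuous}.

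For $(3) \Rightarrow (1)$ the strategy is completely parallel, with the Borel case of Lemma \ref{lema:multiseparada} substituting for its boundedness case at every step. The enabling observation is that the parametrization $(a_1,\dots,a_n)\mapsto \sum_{i=1}^n[0,a_i]e_i$ from $\mathbb{R}_+^n$ to $\Pa(\{e_i\}_{i=1}^n)$ is continuous for the Hausdorff metric, so composition with the Borel map $V$ yields a Borel function on $\mathbb{R}_+^n$. The inductive extraction in Remark \ref{rem:decomposition continuous} then shows each $G_I$ is Borel as a finite sum and difference of such compositions, and its odd extension to $\mathbb{R}^{|I|}$ remains Borel since coordinate reflections are homeomorphisms. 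Apply Lemma \ref{lema:multiseparada} and conclude as before.

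The main obstacle, such as it is, is purely bookkeeping: one must verify carefully that at each inductive step the odd extension preserves the hypothesis (boundedness on a set with nonempty interior, respectively the Borel property), and that the neighborhood inside $U$ can be translated and scaled so as to sit inside $\mathbb{R}_+^n$ when $U$ is not centered at the origin. No genuinely new ideas beyond Theorem \ref{t:McMullen-pa} and Lemma \ref{lema:multiseparada} are needed; the theorem is essentially a packaged restatement of Corollary \ref{c:automatic continuity} together with its Borel analogue.
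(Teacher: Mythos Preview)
Your proposal is correct and matches the paper's own approach: the theorem is stated there without a separate proof, the authors simply remarking that Corollary \ref{c:automatic continuity} works with $B$ replaced by any set with nonempty interior and that the Borel case is analogous. Your write-up fills in exactly those details, using Theorem \ref{t:McMullen-pa}, Remark \ref{rem:decomposition continuous}, and the two cases of Lemma \ref{lema:multiseparada} in the intended way.
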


In this Theorem, the translation invariant condition is necessary. Note that a complete description of general (not necessarily translational invariant) valuations on $\Pa(\{e_i\}_{i=1}^n)$ can be given. For completeness, we include the details in Appendix \ref{S:AppendixA}. As a consequence, one easily observes that no automatic continuity type result can be expected in this more general setting.

The characterization given by Theorem \ref{t:McMullen-pa}, allows a very simple proof of the following well-known result. 
\begin{lema}\label{lema:Simple}
Let $V:\mathcal{K}^n\to\mathbb{R}$ be a translation invariant valuation that is $m$-homogeneous with $m\not\in\{0,1,\dots,n\}$. Then, for any basis $\{e_i\}_{i=1}^n$ of $\mathbb{R}^n$, one has $V|_{\Pa(\{e_i\}_{i=1}^n)}=0$.

\end{lema}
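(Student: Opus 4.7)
The plan is to restrict $V$ to $\Pa(\{e_i\}_{i=1}^n)$, invoke the decomposition from Theorem \ref{t:McMullen-pa}, and convert the $m$-homogeneity of $V$ into a one-variable functional equation in a scaling parameter $\lambda$ that forces every summand in the decomposition to vanish.

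Concretely, I would first write
\[
    V\!\left(\sum_{i=1}^n[0,a_i]e_i\right)=\sum_{I\subset\{1,\dots,n\}}G_I((a_i)_{i\in I}),
\]
with each $G_I\colon\mathbb{R}_+^{|I|}\to\mathbb{R}$ separately additive. A standard Cauchy-type argument shows that a separately additive function is $\mathbb{Q}$-linear in each of its variables, so for every $\lambda\in\mathbb{Q}_{>0}$ one has $G_I((\lambda a_i)_{i\in I})=\lambda^{|I|}G_I((a_i)_{i\in I})$. Applying $m$-homogeneity of $V$ to $\lambda\sum_i[0,a_i]e_i$ and grouping terms by $k=|I|$ then yields, for fixed $(a_i)$,
\[
    \sum_{k=0}^{n}\lambda^{k}A_{k}=\lambda^{m}B\qquad\text{for every }\lambda\in\mathbb{Q}_{>0},
\]
where $A_k:=\sum_{|I|=k}G_I((a_i)_{i\in I})$ and $B:=\sum_{k=0}^{n}A_k$. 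Since both sides depend continuously on $\lambda$, density of the rationals in $(0,\infty)$ extends the identity to every positive real $\lambda$.

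At this point I would argue that, because $m\notin\{0,1,\dots,n\}$, the map $\lambda\mapsto\lambda^m$ on $(0,\infty)$ cannot coincide with any polynomial of degree at most $n$: either $m$ is a non-negative integer strictly larger than $n$ (so $\lambda^m$ is a polynomial of the wrong degree) or $m\notin\mathbb{Z}_{\geq 0}$ (so $\lambda^m$ is not a polynomial at all, as one sees by asymptotics at $0^+$ or $\infty$, or by differentiating $n+1$ times). In either case $B=0$ and then $A_k=0$ for every $k$, so $V(\sum_i[0,a_i]e_i)=0$ for all $a_i\geq 0$. Translation invariance then propagates this vanishing to every parallelotope in $\Pa(\{e_i\}_{i=1}^n)$.

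The only delicate point I anticipate is the linear-independence step: unifying the cases $m\in\mathbb{Z}_{>n}$, $m$ a positive non-integer, and (if permitted) $m<0$ into a single clean argument. The cleanest route is the density/continuity extension to $(0,\infty)$ outlined above, after which the obstruction reduces to the analytic fact that $\lambda^m$ is not among the polynomials of degree $\leq n$.
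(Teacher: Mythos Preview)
Your proposal is correct and follows essentially the same route as the paper: restrict to $\Pa(\{e_i\}_{i=1}^n)$, apply the decomposition of Theorem~\ref{t:McMullen-pa}, use $\mathbb{Q}$-linearity of separately additive functions to obtain $\lambda^{m}V=\sum_{k}\lambda^{k}A_k$ on positive rationals, and conclude from $m\notin\{0,1,\dots,n\}$ that the valuation vanishes. The only cosmetic differences are that you make the continuity extension from $\mathbb{Q}_{>0}$ to $(0,\infty)$ explicit (the paper argues directly on $\mathbb{Q}_+$) and you spell out the final translation-invariance step; neither changes the substance of the argument.
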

\begin{proof}
Let $\{e_i\}_{i=1}^n$ be a basis of $\mathbb{R}^n$. By Theorem \ref{t:McMullen-pa}, we get that
\[
    V\left(\sum_{i=1}^n[0,a_i]e_i\right)=\sum_{I\subset\{1,\dots,n\}}G_I((a_i)_{i\in I}).
\]
for any $a_i\geq 0$. Since $V$ is $m$-homogeneous, we get that for any $\lambda\in\mathbb Q_+$
\begin{align*}
    \lambda ^mV\left(\sum_{i=1}^n[0,a_i]e_i\right)&= V\left(\sum_{i=1}^n[0,\lambda a_i]e_i\right)=\sum_{I\subset\{1,\dots,n\}}G_I((\lambda a_i)_{i\in I})\\
    &=\sum_{I\subset\{1,\dots,n\}}\lambda^{|I|}G_I(( a_i)_{i\in I}),    
\end{align*}
where we used the fact that additive functions are $\mathbb{Q}$-linear \cite[Theorem 5.2.1]{Kuczma}

Comparing both sides of the equation, we find that the mapping $\lambda\mapsto\lambda^m$ must coincide on $\mathbb{Q}_+$ with a polynomial in $\lambda$ of degree at most $n$. This can not hold when $m\not\in\{0,1,\dots,n\}$. Therefore,
\[
    V\left(\sum_{i=1}^n[0,a_i]e_i\right)=0.
\]
\end{proof}

\section{A new characterization of the volume in \texorpdfstring{$\mathbb{R}^n$}{Rn}}\label{S:main}
The previous section enables us to present a generalization of one of Hadwiger's characterizations of the volume. To begin with, we introduce a small technical lemma that allows us to avoid the usual Inclusion-Exclusion property (which is not a priori guaranteed for non-continuous valuations). These lemmas will be used to decompose the convex body into smaller pieces determined by a grid. Note that these lemmas actually work for weakly additive maps since all the cutting is done with hyperplanes.

Let $u\in\mathbb{S}^{n-1}$ and $r< R$ be real numbers, we define the following sets:
\[
    \begin{array}{ll}
        H_{[r,R]}^u\coloneqq\{x\in\mathbb{R}^{n}:\langle x,u\rangle\in[r,R]\}, & H_r^u\coloneqq\{x\in\mathbb{R}^{n}:\langle x,u\rangle=r\} \\
        H_{[r,\infty]}^u\coloneqq\{x\in\mathbb{R}^{n}:\langle x,u\rangle\geq r\}, & H_{[-\infty,r]}^u\coloneqq \{x\in\mathbb{R}^{n}:\langle x,u\rangle\leq r\}.
    \end{array}
\]
To simplify notation throughout this section, we adopt the convention that $V(\emptyset)=0$ for any valuation $V$ and $\left\lceil x\right\rceil$ denotes the least integer satisfying $x\leq\left\lceil x\right\rceil$. 

\begin{lema}
Let $V:\mathcal{K}^n\to\mathbb{R}$ be a valuation, $u\in\mathbb{S}^{n-1}$ and $K\in\mathcal{K}^n$ with $K\subset MB$ ($B$ is the euclidean unit ball) for some $M>0$. Then, for any $\varepsilon<M$ we get 
\[
    V(K)=\sum_{l=0}^{\left\lceil \frac{2M}{\varepsilon}\right\rceil-1}V(K\cap H^u_{[-M+l\varepsilon,-M+(l+1)\varepsilon]}) -\sum_{l=1}^{\left\lceil \frac{2M}{\varepsilon}\right\rceil-1}V(K\cap H^u_{-M+l\varepsilon}).
\]
\end{lema}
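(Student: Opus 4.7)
The plan is to derive the formula by iterated application of weak additivity along parallel hyperplanes $H^u_{-M+l\varepsilon}$, then use the inclusion $K\subset MB$ to clean up the endpoints.

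First, let me set $N=\lceil 2M/\varepsilon\rceil$. The hypothesis $K\subset MB$ gives $\langle x,u\rangle\in[-M,M]\subset[-M,-M+N\varepsilon]$ for every $x\in K$, which will be used at the two ends.

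The core of the argument is the following inductive statement: for every $k\in\{1,\ldots,N\}$,
\[
    V(K)=\sum_{l=0}^{k-1}V(K\cap H^u_{[-M+l\varepsilon,-M+(l+1)\varepsilon]})+V(K\cap H^u_{[-M+k\varepsilon,\infty]})-\sum_{l=1}^{k}V(K\cap H^u_{-M+l\varepsilon}).
\]
For the base case $k=1$, I apply the valuation identity (which, since we cut by a single hyperplane, is exactly weak additivity) to $K$ split by $H^u_{-M+\varepsilon}$, getting
\[
    V(K)+V(K\cap H^u_{-M+\varepsilon})=V(K\cap H^u_{[-\infty,-M+\varepsilon]})+V(K\cap H^u_{[-M+\varepsilon,\infty]}),
\]
and I replace the left halfspace piece by $K\cap H^u_{[-M,-M+\varepsilon]}$ using $K\subset MB$. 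For the inductive step from $k$ to $k+1$, I apply weak additivity to the rightmost piece $K\cap H^u_{[-M+k\varepsilon,\infty]}$, split by $H^u_{-M+(k+1)\varepsilon}$, which extracts one more slab and one more hyperplane term matching the claim.

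Finally, I set $k=N$. Because $-M+N\varepsilon\ge M$, the set $K\cap H^u_{[-M+N\varepsilon,\infty]}$ is either empty (when $N\varepsilon>2M$, in which case $K\cap H^u_{-M+N\varepsilon}$ is also empty) or coincides with $K\cap H^u_{-M+N\varepsilon}$ (when $N\varepsilon=2M$, so the halfspace $\langle x,u\rangle\ge M$ meets $K$ only along the face lying in $H^u_M$). Using the convention $V(\emptyset)=0$, in either case the boundary term $V(K\cap H^u_{[-M+N\varepsilon,\infty]})$ cancels with $V(K\cap H^u_{-M+N\varepsilon})$, reducing the upper index of the hyperplane sum from $N$ to $N-1$ and yielding exactly the stated identity.

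The main obstacle is not the induction itself, which is mechanical, but making sure the valuation identity is legitimately applicable at each step: some of the four sets involved may degenerate (empty, or a lower-dimensional slice when a hyperplane only grazes $K$). This is resolved entirely by the adopted convention $V(\emptyset)=0$ together with the fact that for any convex body $K'$ and any hyperplane $H$, the four sets $K'$, $K'\cap H$, $K'\cap H^+$, $K'\cap H^-$ that are nonempty are again convex bodies, so the valuation/weak additivity identity applies to them, and the empty ones contribute zero on both sides.
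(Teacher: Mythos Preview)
Your proof is correct and follows essentially the same approach as the paper: both iterate weak additivity along the parallel hyperplanes $H^u_{-M+l\varepsilon}$ and use $K\subset MB$ to identify halfspace intersections with slab intersections. Your version is slightly more explicit, phrasing the iteration as a formal induction and treating the terminal cancellation at $k=N$ (distinguishing $N\varepsilon>2M$ from $N\varepsilon=2M$) more carefully than the paper, which simply says ``repeating this process $\lceil 2M/\varepsilon\rceil$ times we get the result.''
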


\begin{proof}
The first step is to take $K\subset MB$ and cut it with the hyperplane $H_{-M+\varepsilon}^u$ and by weak additivity we get that 
\[
    V(K)=V(K\cap H_{[-\infty,-M+\varepsilon]}^u )+V(K\cap H_{[-M+\varepsilon,\infty]}^u )-V(K\cap H^u_{-M+\varepsilon} ).
\]
Since $K\subset MB$ we have that $K\subset H_{[-M,M]}^u$, so in particular
\[
    V(K)=V(K\cap H_{[-M,-M+\varepsilon]}^u)+V(K\cap H_{[-M+\varepsilon,M]}^u )-V(K\cap H^u_{-M+\varepsilon} ).
\]
Now, work with $K\cap H^u_{[-M+\varepsilon,M]}$ and cut with the hyperplane $H^u_{-M+2\varepsilon}$. By weak additivity, we have
\begin{align*}
    V(K\cap H^u_{[-M+\varepsilon,M]})&=V(K\cap  H^u_{[-M+2\varepsilon,M]})+V(K\cap H^u_{[-M+\varepsilon,-M+2\varepsilon]})\\
    &\quad -V(K\cap  H^u_{-M+2\varepsilon}).
\end{align*}

Repeating this process $\left\lceil \frac{2M}{\varepsilon}\right\rceil$ times we get the result. 
\end{proof}

By iterating the previous lemma on different directions, given by any basis $\{e_i\}_{i=1}^n\subset\mathbb{S}^{n-1}$, for any $\varepsilon>0$, we can subdivide any convex set $K$ with a grid aligned with the coordinate axes with respect to the basis. This allows us to compute the value of $V(K)$ as the sum of different pieces fitting inside the grid. Some of the pieces have dimension strictly less than $n$ and will be discarded in the proof of Lemma \ref{l:counting} below, because we will show that the valuation is simple, that is, $V(L)=0$ for any convex body $L$ with $\dim L\leq n-1$.

\begin{lema}\label{l:grid decomposition}
Let $V:\mathcal{K}^n\to\mathbb{R}$ be a valuation, let $\{e_i\}_{i=1}^n\subset\mathbb{S}^{n-1}$ be a basis of $\mathbb{R}^n$ and let $K\in\mathcal{K}^n$ be contained in $MB$ for some $M>0$. Then, for any $\varepsilon<M$ we have
\[
    V(K)=\sum_{i_1,\dots,i_n=1}^{\left\lceil\frac{2M}{\varepsilon}\right\rceil-1}V\left(K\cap \bigcap_{j=1}^nH^{e_j}_{[-M+i_j\varepsilon,-M+(i_j+1)\varepsilon]}\right)+\sum_{l}\theta_l V(K_l)
\]
where the last term is a finite sum, $\theta_l\in\{1,-1\}$ and each $K_l\in\mathcal{K}^n$ satisfies $\dim K_l\leq n-1$.
\end{lema}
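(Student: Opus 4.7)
The plan is to iterate the preceding one-dimensional slicing lemma once in each of the $n$ coordinate directions $e_1,\dots,e_n$. At every step, the slab terms produced by the lemma get further subdivided in the next direction, while the hyperplane slice terms it produces are immediately absorbed into the lower-dimensional correction $\sum_l \theta_l V(K_l)$.

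Concretely, I would start by applying the previous lemma with $u=e_1$ to $V(K)$, writing
\[
V(K) = \sum_{i_1} V\bigl(K\cap H^{e_1}_{[-M+i_1\varepsilon,\,-M+(i_1+1)\varepsilon]}\bigr) \;-\; \sum_{i_1} V\bigl(K\cap H^{e_1}_{-M+i_1\varepsilon}\bigr).
\]
Every term in the second sum lies inside the hyperplane $H^{e_1}_{-M+i_1\varepsilon}$, so the corresponding convex bodies have dimension at most $n-1$ and can be stored as contributions $\theta_l = -1$ to the correction. Next I would apply the previous lemma with $u=e_2$ to each of the slab pieces $K\cap H^{e_1}_{[\cdot,\cdot]}$ remaining in the first sum (which is still a convex body contained in $MB$, so the hypotheses of that lemma are met), replacing each such term by a sum of doubly sliced pieces $K \cap H^{e_1}_{[\cdot,\cdot]} \cap H^{e_2}_{[\cdot,\cdot]}$ minus a sum of hyperplane slices $K \cap H^{e_1}_{[\cdot,\cdot]} \cap H^{e_2}_{c}$; these latter pieces again sit inside a hyperplane and so go into the correction. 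Iterating this procedure for $j=3,\dots,n$ eventually leaves only main terms of the desired grid-cell form $V\bigl(K\cap\bigcap_{j=1}^n H^{e_j}_{[-M+i_j\varepsilon,\,-M+(i_j+1)\varepsilon]}\bigr)$, while every hyperplane slice produced along the way contributes a $K_l$ with $\dim K_l\leq n-1$ and a sign $\theta_l\in\{-1,+1\}$ obtained as a product of signs from the successive iterations.

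The main obstacle is not conceptual but a matter of careful bookkeeping. One needs to verify at each stage that (i) the convex body to which the previous lemma is being applied is still contained in $MB$, which is immediate since intersecting with slabs only shrinks the body; (ii) the hyperplane correction terms keep $\dim\leq n-1$ after any further intersections, which is automatic because a subset of a hyperplane remains a subset of that hyperplane; and (iii) the total number of correction terms is finite, which follows since the iteration has $n$ levels each of finite width $\lceil 2M/\varepsilon\rceil$. The convention $V(\emptyset)=0$ takes care of the grid cells or hyperplane slices that meet $K$ trivially.
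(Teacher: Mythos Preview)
Your proposal is correct and matches the paper's approach exactly: the paper itself offers no formal proof of this lemma beyond the sentence ``By iterating the previous lemma on different directions\dots we can subdivide any convex set $K$ with a grid aligned with the coordinate axes,'' and your write-up is precisely the fleshing-out of that iteration. One small inaccuracy worth noting: in your scheme you only ever re-apply the slicing lemma to the positive slab terms, so every correction term in fact enters with sign $\theta_l=-1$ (not a genuine product of signs); this is harmless since the statement only asks for $\theta_l\in\{-1,+1\}$.
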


The following lemma provides the missing key component of the proof of Theorem \ref{t:main}. It is similar to a volume counting argument, but applied on the values of the valuation on the sets defined by the grid above.  

For convenience, given a valuation $V:\mathcal{K}^n\to\mathbb{R}$ which is bounded on $\mathcal{K}(B)$, let us denote $\|V\|=\sup_{K\subset B}|V(K)|$.

\begin{lema}\label{l:counting}
Let $V:\mathcal{K}^n\to\mathbb{R}$ be a translation invariant valuation that is $m$-homogeneous with $m> n-1$ and is bounded on $\mathcal{K}(B)$. If for some basis $\{e_i\}_{i=1}^n$ of $\mathbb{R}^n$, $V|_{\Pa(\{e_i\}_{i=1}^n)}=0$, then $V=0$. 
\end{lema}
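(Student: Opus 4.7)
My approach proceeds in two stages. First, I establish that $V$ is \emph{simple}---meaning $V(L)=0$ for every $L\in\mathcal{K}^n$ with $\dim L\leq n-1$---by strong induction on $\dim L$. Then I apply Lemma \ref{l:grid decomposition} to an arbitrary $K\in\mathcal{K}^n$: the interior cells of the grid are parallelotopes in $\Pa(\{e_i\}_{i=1}^n)$ (so $V=0$ by hypothesis), exterior cells are empty, and the $O(\varepsilon^{-(n-1)})$ boundary cells each contribute at most $O(\varepsilon^m\|V\|)$, yielding a main sum of size $O(\varepsilon^{m-(n-1)})$ which tends to $0$ as $\varepsilon\to 0$ thanks to $m>n-1$; the lower-dimensional residuals vanish by simplicity. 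The key homogeneity estimate used throughout is that if $K'\subset rB$ then $|V(K')|=r^m|V(K'/r)|\leq r^m\|V\|$, so any piece of $K$ confined to an $\varepsilon$-cube---hence to a ball of radius $C_n\varepsilon$, with $C_n$ depending only on the basis---satisfies $|V(\cdot)|\leq C_n^m\varepsilon^m\|V\|$.

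For the simplicity induction, the base case $\dim L=0$ is immediate: $L$ is a singleton, so translation invariance together with $\{0\}\in\Pa(\{e_i\}_{i=1}^n)$ yields $V(L)=V(\{0\})=0$. For the inductive step, fix $L$ with $\dim L=k\in\{1,\dots,n-1\}$ and apply the grid decomposition with mesh $\varepsilon$, using translation invariance to shift the grid by some vector $s\in[0,\varepsilon)^n$ chosen so that no grid hyperplane $H^{e_j}_r$ contains the affine hull of $L$. This avoids at most one forbidden value of $s_j$ per coordinate direction, so a generic $s$ works. Under this choice every lower-dimensional piece $L_l$ in the decomposition satisfies $\dim L_l<k$, so the inductive hypothesis gives $V(L_l)=0$. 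A standard box-counting argument (the $\varepsilon$-neighborhood of a $k$-dimensional convex body in $MB$ has $n$-volume $O(M^k\varepsilon^{n-k})$) bounds the number of cells meeting $L$ by $O(\varepsilon^{-k})$, so the main sum is $O(\varepsilon^{m-k})\to 0$ as $\varepsilon\to 0$, since $m>n-1\geq k$. Letting $\varepsilon\to 0$ yields $V(L)=0$, completing the induction.

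The main obstacle is exactly this grid-shift step: without it, a slice $L\cap H^{e_j}_r$ with $\mathrm{aff}(L)\subset H^{e_j}_r$ would coincide with $L$ itself, producing a circular identity in place of a useful recursion. Since the space of valid shifts is a continuum while the forbidden set is finite, the required generic choice always exists, and the induction closes. The concluding step---applying the decomposition once more to a full-dimensional $K$---is then a clean repetition of the estimates above: interior cells vanish by hypothesis, boundary cells aggregate to $O(\varepsilon^{m-(n-1)})\to 0$ by the condition $m>n-1$, and the lower-dimensional residuals vanish by the simplicity just proved, giving $V(K)=0$.
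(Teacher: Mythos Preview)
Your proposal is correct, and the full-dimensional step is essentially the paper's argument: interior grid cells lie in $\Pa(\{e_i\}_{i=1}^n)$ and vanish by hypothesis, the $O(\varepsilon^{-(n-1)})$ boundary cells each contribute $O(\varepsilon^m\|V\|)$, and the residual lower-dimensional terms from Lemma~\ref{l:grid decomposition} vanish by simplicity, so $|V(K)|=O(\varepsilon^{m-(n-1)})\to 0$.

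The genuine difference is in how simplicity is obtained. The paper argues by induction on the ambient dimension $n$: for any $(n-1)$-dimensional subspace $E$, the restriction $V|_E$ is again bounded, translation invariant and $m$-homogeneous, and since $m>n-1$ forces $m\notin\{0,\dots,n-1\}$, Lemma~\ref{lema:Simple} (which in turn rests on Theorem~\ref{t:McMullen-pa}) gives $V|_E=0$ on every parallelotope class in $E$; the inductive hypothesis then kills $V|_E$ entirely. Your route bypasses Lemma~\ref{lema:Simple} and the Section~\ref{S:par} machinery altogether: you run the grid decomposition directly on the low-dimensional body $L$, shifting the grid so that no slicing hyperplane contains $\mathrm{aff}(L)$ (at most one forbidden shift per direction, so a generic choice works), which forces every residual piece to drop in dimension and lets the strong induction on $\dim L$ close. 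The counting $O(\varepsilon^{-k})$ for the number of cells meeting a $k$-dimensional $L$ and the bound $O(\varepsilon^{m-k})\to 0$ (using $m>n-1\geq k$) are correct. A pleasant byproduct is that your simplicity argument never uses the hypothesis $V|_{\Pa(\{e_i\}_{i=1}^n)}=0$ (the base case $V(\{0\})=0$ already follows from $m$-homogeneity with $m>0$), so it actually shows that \emph{every} bounded translation invariant $m$-homogeneous valuation with $m>n-1$ is simple---which is exactly what the paper obtains via Lemma~\ref{lema:Simple} plus induction, but by a more self-contained path.
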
 

\begin{proof}
Let us proceed by induction on the dimension. If $n=1$, then $\mathcal{K}^1=\Pa(\{e_1\})$ for any nonzero vector $e_1$ and we have the result. Assume that it is true for $n-1$. First, let us check that $V$ is simple. Let $E$ be a subspace such that $\dim E=n-1$. Then $V|_E$ is a translation invariant valuation that is $m$-homogeneous. By Lemma \ref{lema:Simple}, we get that $V|_E$ is null for any parallelotope with regards to any basis. Applying the induction hypothesis (since $m>n-1>n-2$), we get that $V|_E=0$.


From this point on, we assume that $\{e_i\}_{i=1}^n$ is an orthonormal basis of $\mathbb{R}^n$ for which $V|_{\Pa(\{e_i\}_{i=1}^n)}=0$. In the general case, the ideas and the structure of the proof remain the same, but the computations become slightly more involved. Therefore, to keep the presentation as simple as possible, we work under the assumptions of orthogonality and unit norm.

Let $K\in\mathcal{K}^n$ of full dimension. By the translation invariant property we may assume that there are $r,R>0$ such that 
\[
    B(0,r)\subset K\subset B(0,R)
\]
where $B(0,\alpha)$ is the closed Euclidean ball centered at the origin with radius $\alpha>0$. We will also denote by $B_\infty(x,\alpha)=\{y\in\mathbb{R}^n: \|x-y\|_\infty\leq\alpha \}$ for any $\alpha>0$, the box centered at $x$ and with side length equal to $2\alpha$, where the norm $\|\cdot\|_\infty$ is taken with respect to the orthonormal basis $\{e_i\}_{i=1}^n$. Let $\varepsilon>0$. For $0\leq i_1,\ldots,i_n \leq \left\lceil\frac{2R}{\varepsilon}\right\rceil-1$, define the points $x_{i_1,\dots,i_n}$ as the center of the following box
\[
    B\left(x_{i_1,\dots,i_n},\frac{\varepsilon}{2}\right)=\bigcap_{j=1}^nH^{e_j}_{[-R+i_j\varepsilon,-R+(i_j+1)\varepsilon]}.
\]
Applying Lemma \ref{l:grid decomposition} with the basis $\{e_i\}_{i=1}^n$ and using the fact that $V$ is simple, we get that
\[
    V(K)=\sum_{i_1,\dots,i_n=1}^{\left\lceil\frac{2R}{\varepsilon}\right\rceil-1}V\left(K\cap B\left(x_{i_{1},\dots,i_n},\frac{\varepsilon}{2}\right)\right).
\]
Note that if $K\cap B\left(x_{i_{1},\dots,i_n},\frac{\varepsilon}{2}\right)=B\left(x_{i_{1},\dots,i_n},\frac{\varepsilon}{2}\right)$, then $V(K\cap B\left(x_{i_{1},\dots,i_n},\frac{\varepsilon}{2}\right))=0$, since $V|_{\Pa(\{e_i\}_{i=1}^n)}=0$. 
Let us define the index set
\[
    J=\left\{(i_1,\dots,i_n) :K\cap B\left(x_{i_{1},\dots,i_n},\frac{\varepsilon}{2}\right)\neq B\left(x_{i_{1},\dots,i_n},\frac{\varepsilon}{2}\right)\right\}.
\]
It is clear that if $I\in J$, then
\[
    B_\infty \left(x_{I},\frac{\varepsilon}{2}\right)\cap (\mathbb{R}^n\setminus K)\neq \emptyset.
\]
Therefore, for any $I\in J$
\[
    B_\infty\left(x_I,\frac{\varepsilon}{2}\right)\subset \partial K+B_\infty(0,\varepsilon)\subset \partial K+B(0,\sqrt{n}\varepsilon)
\]
where we used the fact that $B_\infty(x,s)\subset B(x,\sqrt{n}s)$ for all $x\in\mathbb{R}^n$ and $s>0$. Note here that if we were not using an orthonormal basis, the only difference is that $\sqrt{n}$ would be replaced by a certain constant depending only on the basis.
Hence, we have
\[
    \bigcup_{I\in J}B_\infty\left(x_i,\frac{\varepsilon}{2}\right)\subset \partial K+B(0,\sqrt{n}\varepsilon).
\]
Note that 
\[
    \partial K+B(0,\sqrt{n}\varepsilon)=\{x:d(x,K)\leq \sqrt{n}\varepsilon\}\setminus \{x: d(x,\mathbb{R}^n\setminus K)\geq \sqrt{n}\varepsilon\}
\]
where $d$ denotes the Euclidean metric in $\mathbb{R}^n$. The first set is clearly $K+\sqrt{n}\varepsilon B$ and the second term can be rewritten as
\[
    \{x: d(x,\mathbb{R}^n\setminus K)\geq \sqrt{n}\varepsilon\}=\{x: x+\sqrt{n}\varepsilon B\subset K\}=\{x:B(x,\sqrt{n}\varepsilon)\subset K\}.
\]
Using the operation of Minkowski substraction of convex sets denoted with $\sim$, we can rewrite
\[
    \partial K+B(0,\sqrt{n}\varepsilon)= (K+\sqrt{n}\varepsilon B)\setminus (K\sim \sqrt{n} \varepsilon B).
\]
Thus, we get
\[
    \bigcup_{I\in J}B_\infty\left(x_I,\frac{\varepsilon}{2}\right)\subset \left(K+\sqrt{n}{\varepsilon}B\right)\setminus \left(K\sim \sqrt{n} {\varepsilon}B \right).
\]
Since the volume is monotone, the intersection of the different boxes has dimension smaller than or equal to $n-1$ and $K\sim\sqrt{n}\varepsilon B\subset K+\sqrt{n}\varepsilon B$, we get that
\[
    |J| \varepsilon^n\leq \text{Vol}\left(K+\sqrt{n}{\varepsilon}B\right)-\text{Vol}\left(K\sim\sqrt{n}{\varepsilon}B\right).
\]

On the one hand, it is well known that 
\[
    \frac{\text{Vol}\left(K+\sqrt{n}{\varepsilon}B\right)-\text{Vol}(K)}{\sqrt{n}{\varepsilon}}\xrightarrow[\varepsilon\to0^+]{}S(K),
\]
where $S(K)$ denotes the surface area of $K$ (see \cite[Chapter 4]{Schneider}). On the other hand, since $B(0,r)\subset K\subset B(0,R)$, for $\varepsilon$ small enough we get
\[
    \frac{\text{Vol}(K)-\text{Vol}\left(K\sim \sqrt{n}{\varepsilon}B\right)}{\sqrt{n}{\varepsilon}}\leq 4nr^{-2}RV_n(K),
\]
(see \cite[Lemma 2.3.6]{Schneider}). Combining these results we get
\begin{align*}
    |J|&\leq\! \left[\frac{\text{Vol}\left(K+\sqrt{n}{\varepsilon}B\right)-\text{Vol}(K)}{\sqrt{n}\varepsilon}+\frac{\text{Vol}(K)-\text{Vol}\left(K\sim \sqrt{n}{\varepsilon}B\right)}{\sqrt{n}{\varepsilon}}   \right]{\sqrt{n}}\left(\frac{1}{\varepsilon}\right)^{n-1}\\
    &\leq C_{K,n}\left(\frac{1}{\varepsilon}\right)^{n-1},
\end{align*}
where $C_{K,n}$ is a constant that depends on $K$ and on the dimension $n$ (and on the basis $(e_i)_{i=1}^n$). Finally, note that for $I\in J$ we have
\[
    K\cap B_\infty\left(x_I,\frac{\varepsilon}{2}\right)\subset B_\infty\left(x_I,\frac{\varepsilon}{2}\right)\subset B \left(x_I,\sqrt{n}\frac{\varepsilon}{2}\right),
\]
and since $V$ is translation invariant and $m$-homogeneous it follows that
\[
    \left|V\left( K\cap B_\infty\left(x_I,\frac{\varepsilon}{2}\right)\right)\right|\leq \sup_{L\subset \sqrt{n}\frac{\varepsilon}{2}B}|V(L)| = \|V\| \left(\sqrt{n}\frac{\varepsilon}{2}\right)^m.
\]
Therefore, we have
\begin{align*}
    |V(K)|&=\left| \sum_{i_1,\dots,i_n=1}^{\left\lceil\frac{2R}{\varepsilon}\right\rceil-1}V\left(K\cap  B\left(x_{i_{1},\dots,i_n},\frac{\varepsilon}{2}\right)\right) \right|=
    \left| \sum_{I\in J}V\left(K\cap  B_\infty\left(x_I,\frac{\varepsilon}{2}\right)\right) \right|\leq \\
    &\leq |J| \|V\| \left(\sqrt{n}\frac{\varepsilon}{2}\right)^m \leq C'_{K,n} \|V\|\varepsilon^m \frac{1}{\varepsilon^{n-1}}=C_{K,n}'\|V\|  \varepsilon^{m-n+1}.
\end{align*}
for any side length $\varepsilon$ of the grid small enough and with $C'_{K,n}$ a positive constant independent of $\varepsilon$. In particular, taking $\varepsilon\xrightarrow[]{}0^+$ we get that $V(K)=0$. Since this was done for any full dimensional body and the valuation is simple, we get the result.
\end{proof}




\begin{proof}[Proof of Theorem \ref{t:main}]
Let $V$ satisfy the hypothesis and consider $V|_{\Pa(\{e_i\}_{i=1}^n)}$. This valuation is bounded and translation invariant, thus by Corollary \ref{c:automatic continuity}, it is continuous and the $G_I$ given by Theorem \ref{t:McMullen-pa} are continuous and separately linear. Only one of these is $n$-homogeneous, that is, $G_{\{1,\dots,n\}}$, and we get 
\[
    V|_{\Pa(\{e_i\}_{i=1}^n)}\left(\sum_{i=1}^n[0,a_i]e_i\right)=G_{\{1,\dots,n\}}(a_1,\dots,a_n)
\]
for any $a_i\geq 0$. Now since $G$ is separately linear we get that there exists $c\in\mathbb{R}$ such that $G_{\{1,\dots,n\}}(a_1,\dots,a_n)=c\prod_{i=1}^na_i$ which is a multiple of the volume $\text{Vol}$ and yields $V|_{\Pa(\{e_i\}_{i=1}^n)}=c\text{Vol}$. Lastly, define $\phi\coloneqq V-c\text{Vol}$. This is a valuation that satisfies the conditions of the Lemma \ref{l:counting}, and which leads to $\phi=0$.
\end{proof}











Again, we restate that in the previous Lemma \ref{l:counting} the valuation condition can be relaxed, we only need the mapping $V$ to be weakly additive. 
This property is strictly weaker than being a valuation (see \cite[Page 173]{Debiladitiva} for an example of a weakly additive function on $\mathcal{K}^n$ which is not a valuation). Nevertheless, if $V$ is weakly additive, it is straightforward to see that the restriction $V|_{\Pa(\{e_i\}_{i=1}^n)}$ is a valuation on $\Pa(\{e_i\}_{i=1}^n)$, allowing us to apply the results from Section \ref{S:par} 

Another observation is that the mapping $V$ does need to be $\mathbb{R}$-homogeneous, it is enough for it to be $\mathbb{Q}$-homogeneous. This follows from the fact that, in the proof of Lemma \ref{l:counting}, we can choose $\varepsilon$ such that $\sqrt{n}\frac{\varepsilon}{2}$ is rational. We summarize these in the following result.

\begin{coro}
Let $V:\mathcal{K}^n\to\mathbb{R}$ be weakly additive, translation invariant, bounded on $\mathcal{K}(B)$ and $n$-homogeneous with respect to $\mathbb Q$. Then, $V$ is proportional to the Lebesgue measure.
\end{coro}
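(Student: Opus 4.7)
The plan is to follow the proof of Theorem \ref{t:main} almost verbatim, while tracking where the weaker hypotheses enter. Two easy observations make this possible: first, from the preliminaries, if $V$ is weakly additive on $\mathcal{K}^n$ then $V|_{\Pa(\{e_i\}_{i=1}^n)}$ is a genuine valuation, so every tool from Section \ref{S:par} applies to the restriction; second, the grid decomposition (Lemma \ref{l:grid decomposition}) uses only cuts by hyperplanes and thus holds for weakly additive maps. So the argument of Theorem \ref{t:main} is available once we also understand what $\mathbb{Q}$-homogeneity buys us.

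First I would restrict to $\Pa(\{e_i\}_{i=1}^n)$. Since $V|_{\Pa(\{e_i\}_{i=1}^n)}$ is a translation invariant valuation bounded on bounded sets, Corollary \ref{c:automatic continuity} gives continuity, and Theorem \ref{t:McMullen-pa} provides a decomposition
\[
    V\left(\sum_{i=1}^n[0,a_i]e_i\right)=\sum_{I\subset\{1,\dots,n\}}G_I((a_i)_{i\in I}).
\]
By Remark \ref{rem:decomposition continuous} each $G_I$ is continuous, hence every separately additive $G_I$ is actually separately linear, so $G_I$ is $|I|$-homogeneous over $\mathbb{R}$. Matching this against the hypothesis that $V$ is $n$-homogeneous over $\mathbb{Q}$, the identity $\lambda^n\sum_I G_I=\sum_I\lambda^{|I|}G_I$ for all $\lambda\in\mathbb{Q}_+$ forces $G_I\equiv 0$ whenever $|I|\neq n$. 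Separate linearity of $G_{\{1,\dots,n\}}$ then yields a constant $c$ with $V|_{\Pa(\{e_i\}_{i=1}^n)}=c\,\mathrm{Vol}$.

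Next I would set $\phi:=V-c\,\mathrm{Vol}$ and apply (a mild adaptation of) Lemma \ref{l:counting} to conclude $\phi=0$. The map $\phi$ is weakly additive, translation invariant, bounded on $\mathcal{K}(B)$, $\mathbb{Q}$-homogeneous of degree $n$, and vanishes on $\Pa(\{e_i\}_{i=1}^n)$. To see Lemma \ref{l:counting} still applies, I would revisit its proof: the grid decomposition and the simplicity argument (whose base cases invoke Lemma \ref{lema:Simple} for the restriction to $\Pa$, which is again a valuation) both depend only on weak additivity. The one step that genuinely used $\mathbb{R}$-homogeneity is the estimate
\[
    \left|\phi\!\left(K\cap B_\infty(x_I,\tfrac{\varepsilon}{2})\right)\right|\leq \|\phi\|\!\left(\tfrac{\sqrt{n}\varepsilon}{2}\right)^{\!n}.
\]
Here I would restrict $\varepsilon$ to the sequence $\varepsilon_k=2q_k/\sqrt{n}$ with $q_k\in\mathbb{Q}_+$ and $q_k\to 0^+$; then $\sqrt{n}\varepsilon_k/2=q_k\in\mathbb{Q}$ and $\mathbb{Q}$-homogeneity suffices. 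Since $\mathbb{Q}_+$ is cofinal at $0$, passing to the limit through this subsequence recovers the same conclusion $\phi(K)=0$.

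The content of the proof is therefore almost entirely bookkeeping: the main thing to check carefully is that every appeal inside Lemma \ref{l:counting} to the valuation property is either an appeal to weak additivity or to a restriction to $\Pa(\{e_i\}_{i=1}^n)$ (where the two notions coincide), and that the scaling factor $\sqrt{n}\varepsilon/2$ can be kept rational without spoiling the limiting argument. There is no real obstacle beyond this verification, which is why the result is stated as a corollary of Theorem \ref{t:main} rather than as an independent theorem.
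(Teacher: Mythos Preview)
Your proposal is correct and follows essentially the same approach as the paper: the corollary is presented there as a direct consequence of the observation that in the proof of Theorem \ref{t:main} (and Lemma \ref{l:counting}) weak additivity suffices for all hyperplane-cutting arguments, the restriction to $\Pa(\{e_i\}_{i=1}^n)$ is a genuine valuation, and one may choose $\varepsilon$ so that $\sqrt{n}\varepsilon/2$ is rational. Your slightly more explicit handling of why $G_I\equiv 0$ for $|I|\neq n$ under $\mathbb{Q}$-homogeneity is in the same spirit as Lemma \ref{lema:Simple} and adds no new ingredient.
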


\section{Degrees of homogeneity for a bounded valuation}\label{S:homogeneidad}

In the previous section, we used Lemma \ref{l:counting} to derive a stronger version of Hadwiger's characterization of the volume. In this section, we exploit this lemma further to extract some results about the homogeneity of bounded valuations. Specifically, we show that for bounded, translation invariant valuations on $\mathcal{K}^n$ the set of admissible degrees of homogeneities is exactly $[0,n-1]\cup\{n\}$. 

\begin{prop}\label{prop:homogeneidades}
Let $V:\mathcal{K}^n\to\mathbb{R}$ be a translation invariant valuation such that $\sup_{K\subset B}|V(K)|<\infty$. If $V$ is $m$-homogeneous, then $m\in[0,n-1]\cup \{n\}$.
\end{prop}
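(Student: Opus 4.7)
The plan is to prove the contrapositive: for any $m\notin[0,n-1]\cup\{n\}$, we show that any $V$ satisfying the hypotheses that happens to be $m$-homogeneous must vanish identically. Three excluded ranges need to be handled separately: $m<0$, $m\in(n-1,n)$, and $m>n$.

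The case $m<0$ is a direct scaling estimate using only translation invariance, $m$-homogeneity, and the boundedness hypothesis. Fix $K\in\mathcal{K}^n$. For every $\lambda\in(0,1/\mathrm{diam}(K)]$ the set $\lambda K$ has diameter at most $1$, so after a suitable translation it lies inside $B$. Translation invariance then gives $|V(\lambda K)|\leq \|V\|$, while $m$-homogeneity yields $V(K)=\lambda^{-m}V(\lambda K)$. Combining these, $|V(K)|\leq \|V\|\,\lambda^{-m}$, and since $-m>0$ we may let $\lambda\to 0^+$ to conclude $V(K)=0$. Because $K$ was arbitrary, $V\equiv 0$.

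For the remaining range $m\in(n-1,n)\cup(n,\infty)$, we combine the two main lemmas established earlier in the paper. Since $m\notin\{0,1,\dots,n\}$, Lemma \ref{lema:Simple} yields $V|_{\Pa(\{e_i\}_{i=1}^n)}=0$ for every basis $\{e_i\}_{i=1}^n$ of $\mathbb{R}^n$. Since moreover $m>n-1$, the hypotheses of Lemma \ref{l:counting} are fulfilled (translation invariance, $m$-homogeneity with $m>n-1$, boundedness on $\mathcal{K}(B)$, and vanishing on $\Pa(\{e_i\}_{i=1}^n)$ for at least one basis). The conclusion of that lemma is exactly $V=0$, as required.

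The real difficulty has already been absorbed into the counting/grid argument of Lemma \ref{l:counting}; the present proposition is essentially a bookkeeping consequence of that lemma together with Lemma \ref{lema:Simple}, plus the elementary scaling estimate for negative degrees. If there is any subtlety, it is only in ensuring that the three forbidden ranges genuinely cover the complement of $[0,n-1]\cup\{n\}$ and that in each case the correct combination of hypotheses is available, but this is immediate from the dichotomy $m\leq n-1$ versus $m>n-1$ provided by Lemma \ref{l:counting}.
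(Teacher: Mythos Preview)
Your proof is correct and follows essentially the same approach as the paper: the case $m<0$ is handled by a direct scaling argument using boundedness on $\mathcal{K}(B)$, and the range $m\in(n-1,n)\cup(n,\infty)$ is dispatched by combining Lemma~\ref{lema:Simple} with Lemma~\ref{l:counting}. The only cosmetic difference is that the paper phrases the $m<0$ case as a contradiction from a fixed $K\in\mathcal{K}(B)$ with $V(K)\neq 0$, whereas you argue directly that $V(K)=0$ for every $K$; the content is identical.
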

\begin{proof}
First, since $V$ is bounded, we must have $m\geq 0$; otherwise taking $K\in\mathcal{K}(B)$ with $V(K)\neq 0$ would yield
\[
    |V(\lambda K)|=\lambda^m|V(K)|
\]
which tends to infinity as $\lambda$ tends to 0, even though $\lambda K\subset B$.\\
Next, if $m>n-1$ and $m\neq n$, then combining Lemma \ref{lema:Simple} and Lemma \ref{l:counting} shows that $V=0$.
\end{proof}

To see that this set cannot be reduced any further, we construct bounded and translation invariant valuations, $\phi_l$, that are $m$-homogeneous for any $m\in[0,n-1]$. These examples show both that each degree in this interval actually occurs, and that the argument from Lemma \ref{l:counting} can not be pushed to extend the values of $m$. Indeed, each $\phi_l$ vanishes on polytopes while satisfying $\phi_l(B)>0$. These valuations are analogous to the $p$-affine surface areas (see \cite[Section 3.3]{MonikayFabian} for a gentle introduction) and are based on the Gauss Kronecker curvature. Such valuations have been previously introduced in \cite{Curvature-Integrals} and, in the case $n=2$, have been characterized as the upper semicontinuous valuations that are rotation invariant and homogeneous of degree different from 0, 1 or 2 (see \cite{Planar}).

\begin{defin}
Given $K\in\mathcal{K}^n$, let $\kappa(K,x)$ denote the Gauss-Kronecker curvature of $\partial K$ at the point $x$.
\end{defin}
A well known result due to Aleksandrov (see \cite{Alexandroff} or \cite[Theorem 2.6.1]{Schneider}) states that the boundary of a convex body is twice differentiable almost everywhere with respect to the $(n-1)$-dimensional Hausdorff measure. This implies that the curvature is well defined almost everywhere, and moreover, $x\mapsto \kappa(K,x)$ is measurable. This mapping can also be described as the density of the absolutely continuous part of the curvature measure $C_0(K,\cdot)$ with respect to the $(n-1)$-dimensional Hausdorff measure. We refer the reader to \cite[Chapter 4]{Schneider} and \cite[Theorem 3.2]{Hug} for the definition of $C_0(K,\cdot)$ and details. As a consequence, we have the following.

\begin{lema}\label{lema:homogeneidad_curvatura}
The mapping $\kappa$ is homogeneous, in particular, for $\lambda> 0$ we get that
\[
    \kappa(\lambda K,\lambda x)=\frac{1}{\lambda^{n-1}}\kappa(K,x)
\]
\end{lema}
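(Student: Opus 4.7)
The plan is to leverage the description given just before the statement: $\kappa(K,\cdot)$ is the Radon--Nikodym density of the absolutely continuous part of the curvature measure $C_0(K,\cdot)$ with respect to the $(n-1)$-dimensional Hausdorff measure $\mathcal{H}^{n-1}$ on $\partial K$. From that definition, the scaling behavior of $\kappa$ will be forced by the scaling behavior of these two underlying measures.

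First I would recall the homogeneity of the curvature measures $C_k(K,\cdot)$ (see, e.g., \cite[Ch.~4]{Schneider}): for every $\lambda>0$ and every Borel $A\subset\mathbb{R}^n$,
\[
    C_k(\lambda K,\lambda A)=\lambda^k C_k(K,A),
\]
so in particular $C_0(\lambda K,\lambda A)=C_0(K,A)$. On the other hand, $\mathcal{H}^{n-1}$ scales with degree $n-1$ under dilation: $\mathcal{H}^{n-1}(\lambda A)=\lambda^{n-1}\mathcal{H}^{n-1}(A)$. Since the map $A\mapsto \lambda A$ preserves the Lebesgue decomposition with respect to these two measures, the absolutely continuous part of $C_0$ inherits the same invariance, i.e.\ $C_0^{ac}(\lambda K,\lambda A)=C_0^{ac}(K,A)$.

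Then the identity is obtained by a change of variables. For any Borel $A\subset\partial K$,
\[
    \int_A \kappa(K,x)\,d\mathcal{H}^{n-1}(x)=C_0^{ac}(K,A)=C_0^{ac}(\lambda K,\lambda A)=\int_{\lambda A}\kappa(\lambda K,y)\,d\mathcal{H}^{n-1}(y),
\]
and the substitution $y=\lambda x$, which contributes a Jacobian factor $\lambda^{n-1}$ for $\mathcal{H}^{n-1}$, rewrites the right-hand side as
\[
    \lambda^{n-1}\int_A \kappa(\lambda K,\lambda x)\,d\mathcal{H}^{n-1}(x).
\]
Since this equality of integrals holds for every Borel $A\subset\partial K$, we conclude $\kappa(K,x)=\lambda^{n-1}\kappa(\lambda K,\lambda x)$ for $\mathcal{H}^{n-1}$-almost every $x\in\partial K$, which is the claim.

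The main obstacle, such as it is, lies in the bookkeeping for the Lebesgue decomposition under the dilation map: one must check that the absolutely continuous and singular parts of $C_0(\lambda K,\cdot)$ with respect to $\mathcal{H}^{n-1}|_{\partial(\lambda K)}$ are exactly the pushforwards under $x\mapsto\lambda x$ of the corresponding parts of $C_0(K,\cdot)$ with respect to $\mathcal{H}^{n-1}|_{\partial K}$. This is routine since the dilation is a bi-Lipschitz homeomorphism and both measures transform homogeneously, so the mutually singular and absolutely continuous relations are preserved. Once this is handled, the proof reduces to the one-line change of variables above.
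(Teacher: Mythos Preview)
Your proposal is correct and follows essentially the same route as the paper: both use the scaling identity $C_0(\lambda K,\lambda A)=C_0(K,A)$, the $(n-1)$-homogeneity of $\mathcal{H}^{n-1}$, and a change of variables (the paper phrases it via the push-forward of $\mathcal{H}^{n-1}|_{\partial K}$ under $x\mapsto\lambda x$) to conclude the a.e.\ identity. If anything, you are slightly more careful in explicitly isolating the absolutely continuous part $C_0^{ac}$ and noting that the Lebesgue decomposition is preserved under dilation, whereas the paper invokes uniqueness of the decomposition in one sentence and then writes the computation with $C_0$ itself.
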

\begin{proof}
This is straightforward using the fact that $\kappa$ coincides with the density of the absolutely continuous part of the curvature measure $C_0(K,\cdot)$, since the Lebesgue decomposition of the measure $C_0(K,\cdot)$ is unique and it satisfies that $C_0(\lambda K,\lambda \beta)=C_0(K,\beta)$ for any Borel set $\beta$ in $\mathbb{R}^n$. More explicitly, let $H^{n-1}|_{\partial K}(E)=H^{n-1}(\partial K\cap E)$. For any $K$, we have
\[
    C_0(K,\beta)=\int_\beta \kappa(K,x)\mathrm{d}H^{n-1}|_{\partial K}(x).
\]
Let $\lambda >0$. Since $C_0(K,\beta)=C_0(\lambda K,\lambda \beta)$, we obtain
\[
    \int_\beta \kappa(K,x)\mathrm{d}H^{n-1}|_{\partial K}(x)=\int_{\lambda\beta}\kappa (\lambda K,x)\mathrm{d}H^{n-1}|_{\partial(\lambda K)}(x).
\]
by the homogeneity of $C_0$. Now, define $f_\lambda:\mathbb{R}^n\to\mathbb{R}^n$ as $f_\lambda(x)=\lambda x$ and consider the push-forward measure $\nu$ given by $f_\lambda$ and the measure $H^{n-1}|_{\partial K}$. Then, for any Borel set $\beta$ of $\mathbb{R}^n$ one has
\begin{align*}
    \nu(\beta)&=H^{n-1}|_{\partial K}(f_\lambda^{-1}(\beta))=H^{n-1}(\partial K\cap \lambda^{-1}\beta)\\
    &=\frac{1}{\lambda^{n-1}}H^{n-1}(\partial(\lambda K)\cap \beta)=\frac{1}{\lambda^{n-1}}H^{n-1}|_{\partial(\lambda K)}(\beta).
\end{align*}
where we used the fact that $\partial (\lambda K)=\lambda\partial K$ and the homogeneity of the Hausdorff measure. That is, the push-forward measure is a multiple of the measure $H^{n-1}|_{\partial(\lambda K)}$. Thus,
\begin{align*}
    \int_{\lambda\beta}\kappa(\lambda K,x)\mathrm{d}H^{n-1}|_{\partial(\lambda K)}(x)& =\lambda^{n-1}\int_{\lambda\beta}\kappa(\lambda K,x)\mathrm{d}\nu(x)\\ 
    & =\lambda^{n-1}\int_{\mathbb{R}^n}\kappa(\lambda K,x)\chi_{\lambda \beta}(x)\mathrm{d}\nu,
\end{align*}
where $\chi_A(x)$ denotes the indicator function of the set $A$. Applying the property \cite[Theorem 3.6.1]{pushforward} we get that
\begin{align*}
    \int_{\mathbb{R}^n}\kappa(\lambda K,x)\chi_{\lambda \beta}(x)\mathrm{d}\nu& =\int_{\mathbb{R}^n}\kappa(\lambda K,f_\lambda(x))\chi_{\lambda \beta}(f_\lambda(x))\mathrm{d}H^{n-1}|_{\partial K}(x)\\
    &=\int_\beta \kappa(\lambda K,\lambda x) \mathrm{d}H^{n-1}|_{\partial K}(x).
\end{align*}
Thus, for any Borel set $\beta$,
\[
    \int_\beta \left(\kappa(K,x)-\lambda^{n-1}\kappa(\lambda K,\lambda K)\right)\mathrm{d}H^{n-1}|_{\partial K}(x)=0,
\]
which implies that
\[
    \kappa(K,x)=\lambda^{n-1}\kappa(\lambda K,\lambda x)
\]
almost everywhere with respect to $H^{n-1}|_{\partial K}$.
\end{proof}

\begin{defin}
For $l\in\mathbb{R}$, let us define $\phi_l:\mathcal{K}^n\to\mathbb{R}\cup\{\infty\}$ as
\[
    \phi_m(K)=\int_{\partial K}\kappa(K,x)^l\mathrm{d}H^{n-1}(x).
\]
\end{defin}

This is well defined since $\kappa(K,\cdot)$ is measurable and nonnegative, also, in the case $l=0$, we adopt the convention that $0^0=0$. We will need the following lemma taken directly from \cite[Lemma 5]{Schutt}.

\begin{lema}\label{l:Schutt}
Let $K,L\in\mathcal{K}^n$ such that $K\cup L\in\mathcal{K}^n$. For those $x\in\partial K\cap \partial L$ where all the curvatures $\kappa(K\cup L,\cdot)$, $\kappa(K\cap L,\cdot)$, $\kappa(K,\cdot)$ and $\kappa(L,\cdot)$ exist we have that
\[
    \begin{array}{c}
         \kappa(K\cup L,x)=\min\{\kappa(K,x),\kappa(L,x)\},  \\
         \kappa(K\cap L,x)=\max\{\kappa(K,x),\kappa(L,x)\}. 
    \end{array}
\]
\end{lema}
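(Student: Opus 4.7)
The plan is to reduce the lemma to a pointwise second-order Taylor computation in suitable local coordinates. Let $x \in \partial K \cap \partial L$ be a point at which the four curvatures exist; in particular, $x \in \partial(K \cup L) \cap \partial(K \cap L)$. Since $K \cup L$ is convex it admits a supporting hyperplane $H$ at $x$, and because $K$, $L$, $K \cap L$ are all contained in $K \cup L$, this $H$ simultaneously supports all four bodies at $x$. I would then make an affine change of coordinates placing $x$ at the origin, $H = \{x_n = 0\}$, and arranging the four bodies to lie locally in $\{x_n \leq 0\}$.

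Since the curvatures of $K$ and $L$ exist at $x$, both boundaries are second-order differentiable there in the Aleksandrov sense, so near the origin they are graphs
\[
\partial K : x_n = -f_K(x'), \qquad \partial L : x_n = -f_L(x'),
\]
where $f_K, f_L : \mathbb{R}^{n-1} \to [0, \infty)$ are convex, vanish at the origin with vanishing gradient, and admit second-order Taylor expansions with Hessians $A$ and $B$. In these coordinates $\kappa(K, x) = \det A$ and $\kappa(L, x) = \det B$. A pointwise inspection yields that, locally,
\[
K \cap L = \{x_n \leq -\max(f_K, f_L)(x')\}, \qquad K \cup L = \{x_n \leq -\min(f_K, f_L)(x')\},
\]
so the height functions of $K \cap L$ and $K \cup L$ are $\max(f_K, f_L)$ and $\min(f_K, f_L)$. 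Existence of their curvatures at $x$ then amounts to these two functions having second-order Taylor expansions at the origin, with Hessians $M_{\cap}$ and $M_{\cup}$.

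The core step is to identify $M_{\cup}$ and $M_{\cap}$. Comparing the quadratic parts of the Taylor expansions, replacing $w$ by $tw$ and letting $t \to 0^{+}$, one obtains
\[
w^{T} M_{\cup} w = \min(w^{T} A w,\, w^{T} B w), \qquad w^{T} M_{\cap} w = \max(w^{T} A w,\, w^{T} B w)
\]
for every $w \in \mathbb{R}^{n-1}$. The crucial algebraic fact is that the pointwise minimum of two quadratic forms is itself a quadratic form only when one of them dominates the other on all of $\mathbb{R}^{n-1}$: indeed, $A - M_{\cup}$ and $B - M_{\cup}$ are both positive semidefinite, for every $w$ at least one of $w^{T}(A - M_{\cup})w$ and $w^{T}(B - M_{\cup})w$ vanishes, the null sets of such PSD forms are linear subspaces, and a vector space is never the union of two proper subspaces; hence $A = M_{\cup}$ or $B = M_{\cup}$. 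This forces $A \leq B$ or $B \leq A$ in the PSD order, and $M_{\cup}$ (respectively $M_{\cap}$) is the smaller (respectively larger) of $A$ and $B$. Monotonicity of $\det$ on the PSD cone, a consequence of the Minkowski determinant inequality, then gives $\det M_{\cup} = \min(\det A, \det B)$ and $\det M_{\cap} = \max(\det A, \det B)$, which are the claimed identities.

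The main obstacle I anticipate lies precisely in this PSD-comparability step: extracting the ordering $A \leq B$ or $B \leq A$ from the bare existence of the second-order Taylor expansions of $\min(f_K, f_L)$ and $\max(f_K, f_L)$. The other ingredients — common supporting hyperplane, identification of the local height functions, and determinant monotonicity on the PSD cone — are standard and essentially formal once the local coordinate frame is fixed.
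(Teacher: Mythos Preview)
The paper does not supply its own proof of this lemma: it is quoted verbatim from \cite[Lemma~5]{Schutt}, so there is no in-paper argument to compare against. Your proposal is a correct, self-contained proof and follows the standard route. The key step --- deducing from the mere existence of a second-order expansion of $\min(f_K,f_L)$ that the Hessians $A,B$ are comparable in the Loewner order --- is exactly right, and your argument (the zero set of a PSD form is a linear subspace; a real vector space is not the union of two proper subspaces) is clean and complete. Two minor remarks: write ``rigid'' rather than ``affine'' for the change of coordinates, since the Gauss--Kronecker curvature is not an affine invariant (your setup requires only a translation and a rotation anyway); and you might record explicitly that $A,B\geq 0$ because $f_K,f_L$ are convex, since this is what makes the determinant-monotonicity step go through (including the degenerate case $\det B=0$, where a kernel vector of $B$ is forced into the kernel of $A$).
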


With this lemma and following the same procedure as in \cite{Schutt} we can prove that the previous mapping is a valuation.

\begin{lema}
$\phi_l$ is a valuation for any $l\in\mathbb R$.
\end{lema}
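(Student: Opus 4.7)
The plan is to adapt Sch\"utt's argument from \cite{Schutt}, for which Lemma \ref{l:Schutt} is the key analytic input. I need to verify
\[
    \phi_l(K) + \phi_l(L) = \phi_l(K \cup L) + \phi_l(K \cap L)
\]
whenever $K, L, K \cup L \in \mathcal{K}^n$. The approach is to partition the boundary integrals and then match them using a pointwise identity coming from Lemma \ref{l:Schutt}.

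First I would decompose the boundaries. Write $\partial K$ as the disjoint union $(\partial K \cap \mathrm{int}\, L) \sqcup (\partial K \setminus L) \sqcup C$, where $C = \partial K \cap \partial L$, and similarly $\partial L = (\partial L \cap \mathrm{int}\, K) \sqcup (\partial L \setminus K) \sqcup C$. At $H^{n-1}$-a.e.\ $x \in \partial K \cap \mathrm{int}\, L$, a neighborhood of $x$ lies in $L$, so locally $K \cap L = K$; hence $x \in \partial(K \cap L)$ with $\kappa(K \cap L, x) = \kappa(K, x)$, while $x \in \mathrm{int}(K \cup L)$. Symmetrically, points of $\partial K \setminus L$ lie in $\partial(K \cup L)$ with $\kappa(K \cup L, x) = \kappa(K, x)$, and not in $\partial(K \cap L)$; the same assignments apply to the pieces of $\partial L$. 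The integrals over the non-shared pieces on the two sides of the target identity therefore match term by term.

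It remains to treat the common boundary $C$. By Aleksandrov's theorem all four curvatures $\kappa(K, \cdot)$, $\kappa(L, \cdot)$, $\kappa(K \cup L, \cdot)$, $\kappa(K \cap L, \cdot)$ are defined $H^{n-1}$-a.e.\ on their respective boundaries. At a.e.\ $x \in C$ where all four exist, Lemma \ref{l:Schutt} gives
\[
    \{\kappa(K \cup L, x), \kappa(K \cap L, x)\} = \{\min\{\kappa(K, x), \kappa(L, x)\}, \max\{\kappa(K, x), \kappa(L, x)\}\},
\]
and since $\min(a, b)^l + \max(a, b)^l = a^l + b^l$ for any $a, b \geq 0$ and $l \in \mathbb{R}$ (using the stated convention $0^0 = 0$ in the case $l = 0$), this yields the pointwise equality
\[
    \kappa(K \cup L, x)^l + \kappa(K \cap L, x)^l = \kappa(K, x)^l + \kappa(L, x)^l.
\]
Integrating over $C$ matches the $C$-contributions on both sides, which together with the first step delivers the valuation identity.

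The main obstacle is the delicate bookkeeping on $C$: at points where the outward normals of $K$ and $L$ point in opposite directions, $x \in \mathrm{int}(K \cup L)$ while $K \cap L$ is locally contained in the common tangent hyperplane, so the boundary curvatures $\kappa(K \cup L, x)$ and $\kappa(K \cap L, x)$ are not defined in the usual sense (and $K \cap L$ itself may be lower-dimensional, making $\phi_l(K \cap L)$ delicate). Following \cite{Schutt}, one must verify that such exceptional points form an $H^{n-1}$-null subset of $C$, or more precisely that their contributions cancel across the two sides, so that the pointwise identity above integrates to the desired valuation property.
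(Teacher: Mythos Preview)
Your proposal is correct and follows essentially the same route as the paper: decompose the boundaries of $K$, $L$, $K\cup L$, $K\cap L$ into the common piece $\partial K\cap\partial L$ and the remaining pieces, use that curvature is a local invariant to match the non-shared contributions, and then apply Lemma~\ref{l:Schutt} together with the identity $\min(a,b)^l+\max(a,b)^l=a^l+b^l$ on the common boundary. The extra care you flag about exceptional points of $C$ with opposite normals is not spelled out in the paper's proof either; both simply defer to \cite{Schutt} for these measure-zero issues.
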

\begin{proof}
The key of this proof is based in the partition of the boundaries of the convex bodies. In particular, using the following identities:
\[
\begin{array}{l}
     \partial (K\cup L)=(\partial K\cap\partial L)\cup\left(\partial K\cap L^c\right)\cup(K^c\cap \partial L)\\
    \partial (K\cap L)=(\partial K\cap\partial L)\cup(\partial K\cap \text{Int}\,L)\cup(\text{Int}\,K\cap \partial L)\\
    \partial K=  (\partial K\cap\partial L)\cup (\partial K\cap L^c)\cup (\partial K\cap \text{Int}\,L)\\
    \partial L=  (\partial K\cap\partial L)\cup (\partial L\cap K^c)\cup (\partial L\cap \text{Int}\,K),
\end{array}
\]
where $K^c,L^c,\text{Int}\,K,\text{Int}\,L$ are the complementary of $K$, of $L$, the interior of $K$ and of $L$ respectively. In the following, we will assume that the integrals are all with respect to the Hausdorff measure $H^{n-1}$, so we will omit its symbol and the variable of integration. Consider
\begin{align*}
    \phi_l(K\cup L)=\int_{\partial K\cap\partial L}\kappa(K\cup L)^l+\int_{\partial K\cap L^c}\kappa(K\cup L)^l+\int_{\partial L\cap K^c}\kappa(K\cup L)^l,\\
    \phi_l(K\cap L)=\int_{\partial K\cap\partial L}\kappa(K\cap L)^l+\int_{\partial K\cap \text{Int}\, L}\kappa(K\cap L)^l+\int_{\partial L\cap \text{Int}\, K}\kappa(K\cap L)^l.
\end{align*}
    
Since $L^c, K^c, \text{Int}\, K,\text{Int}\, L$ are open sets and the curvature is a local invariant, we get that
\begin{align*}
    \phi_l(K\cup L)=\int_{\partial K\cap\partial L}\kappa(K\cup L)^l+\int_{\partial K\cap L^c}\kappa(K)^l+\int_{\partial L\cap K^c}\kappa(L)^l\\
    \phi_l(K\cap L)=\int_{\partial K\cap\partial L}\kappa(K\cap L)^l+\int_{\partial K\cap \text{Int}\, L}\kappa(K)^l+\int_{\partial L\cap \text{Int}\, K}\kappa(L)^l.
\end{align*}
Now, using Lemma \ref{l:Schutt} we get that 
\[
    \int_{\partial K\cap\partial L}\kappa(K\cup L)^l+\int_{\partial K\cap\partial L}\kappa(K\cap L)^l=\int_{\partial K\cap\partial L}\kappa(K)^l+\int_{\partial K\cap\partial L}\kappa(L)^l
\]
and reorganizing the terms we get the identity 
\[
    \phi_l(K\cup L)+\phi_l(K\cap L)=\phi_l(K)+\phi_l(L).
\]
\end{proof}
Since the curvature is translation invariant, that is, $\kappa(K+y,x+y)=\kappa(K,x)$, we get that the previous valuations are translation invariant. Not only that, they are also homogeneous.

\begin{lema}
The previous valuation $\phi_l$ is $(n-1)(1-l)$-homogeneous. 
\end{lema}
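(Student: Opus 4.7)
The plan is a direct change of variables calculation in the integral defining $\phi_l$, using the scaling identity for $\kappa$ already established in Lemma \ref{lema:homogeneidad_curvatura} together with the fact that the $(n-1)$-dimensional Hausdorff measure is itself $(n-1)$-homogeneous under dilations.

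First I would write
\[
\phi_l(\lambda K)=\int_{\partial(\lambda K)}\kappa(\lambda K,y)^{l}\,\mathrm{d}H^{n-1}(y)
\]
and perform the change of variable $y=\lambda x$, where $x$ ranges over $\partial K$ (recall $\partial(\lambda K)=\lambda\partial K$). This is exactly the pushforward computation that already appears inside the proof of Lemma \ref{lema:homogeneidad_curvatura}: the pushforward of $H^{n-1}|_{\partial K}$ by $f_\lambda(x)=\lambda x$ satisfies
\[
\int_{\partial(\lambda K)} g\, \mathrm{d}H^{n-1}|_{\partial(\lambda K)}=\lambda^{n-1}\int_{\partial K} g(\lambda x)\,\mathrm{d}H^{n-1}|_{\partial K}(x)
\]
for any nonnegative Borel function $g$. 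Applying this with $g(y)=\kappa(\lambda K,y)^{l}$ gives
\[
\phi_l(\lambda K)=\lambda^{n-1}\int_{\partial K}\kappa(\lambda K,\lambda x)^{l}\,\mathrm{d}H^{n-1}(x).
\]

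Next I would invoke Lemma \ref{lema:homogeneidad_curvatura}, which states $\kappa(\lambda K,\lambda x)=\lambda^{-(n-1)}\kappa(K,x)$ for $H^{n-1}|_{\partial K}$-a.e.\ $x$. Raising to the power $l$ and substituting yields
\[
\phi_l(\lambda K)=\lambda^{n-1}\int_{\partial K}\lambda^{-(n-1)l}\kappa(K,x)^{l}\,\mathrm{d}H^{n-1}(x)=\lambda^{(n-1)(1-l)}\phi_l(K),
\]
which is exactly the claimed degree of homogeneity.

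The only step that requires any care is the integral change of variables, because the Hausdorff measure on $\partial K$ is not the Lebesgue measure on $\mathbb{R}^n$; however this is precisely the bookkeeping already carried out in the proof of Lemma \ref{lema:homogeneidad_curvatura}, so in practice it suffices to cite that computation. No delicate approximation is needed, since $\kappa(K,\cdot)\geq 0$ makes the integrals well defined in $[0,\infty]$, and the convention $0^0=0$ handles the case $l=0$. Thus the proof reduces to combining the two homogeneities $\lambda^{n-1}$ (from the measure) and $\lambda^{-(n-1)l}$ (from the curvature) into the single factor $\lambda^{(n-1)(1-l)}$.
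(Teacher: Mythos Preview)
Your proof is correct and follows essentially the same approach as the paper: a change of variables using the pushforward computation from Lemma \ref{lema:homogeneidad_curvatura} to extract the factor $\lambda^{n-1}$ from the Hausdorff measure, followed by the curvature scaling $\kappa(\lambda K,\lambda x)=\lambda^{-(n-1)}\kappa(K,x)$ to obtain $\lambda^{(n-1)(1-l)}$. The paper's version is just more terse.
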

\begin{proof}
Using a similar procedure as in Lemma \ref{lema:homogeneidad_curvatura} we get
\begin{align*}
    \phi_l(\lambda K)&=\int_{\partial\lambda K}\kappa(\lambda K,x)^{l}\mathrm{d}H^{n-1}(x)=\int_{\partial K}\kappa (\lambda K,\lambda x)^{l}\lambda^{n-1}\mathrm{d}H^{n-1}(x)\\
    &=\phi_l(K)\lambda^{n-1-l(n-1)}
\end{align*}
\end{proof}

Now, if $l<0$, since $\kappa(P,x)=0$ for $P\in\mathcal{P}^n$ we get that $\phi_l(P)=\infty$ which makes the valuation not bounded. If $l>1$ the previous result implies that the valuations is homogeneous with negative degree, which implies that it is also not bounded on $\mathcal{K}(B)$. Thus we are only interested in the range $l\in [0,1]$. 

\begin{lema}
For $l\in[0,1]$ the previous valuation $\phi_l$ is bounded on $\mathcal{K}(B)$. 
\end{lema}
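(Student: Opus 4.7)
The plan is to establish the bound by handling the extremes $l=0$ and $l=1$ first, and then interpolating between them with Hölder's inequality.

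For $l=0$, with the convention $0^0=0$ adopted earlier, $\phi_0(K)$ reduces to $H^{n-1}(\partial K^{\text{reg}})$, where the integrand is the indicator of points of twice-differentiability; since the singular set has $H^{n-1}$-measure zero (by Aleksandrov), this is simply the surface area $S(K)$. For $K\in\mathcal{K}(B)$, the classical monotonicity of surface area under inclusion of convex bodies gives $S(K)\leq S(B)=n\omega_n$.

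For $l=1$, the integral $\phi_1(K)=\int_{\partial K}\kappa(K,x)\,\mathrm{d}H^{n-1}(x)$ equals the total mass of the absolutely continuous part of the $0$-th curvature measure $C_0(K,\cdot)$ (by the definition of $\kappa$ recalled in Section \ref{S:homogeneidad}), and hence is bounded above by $C_0(K,\mathbb{R}^n)=H^{n-1}(\mathbb{S}^{n-1})=n\omega_n$, a universal constant independent of $K$. This is the standard fact that the Gauss map, viewed as a measure-valued map, pushes the surface area of $\partial K$ with Jacobian $\kappa$ onto $\mathbb{S}^{n-1}$, so the total integral cannot exceed the surface of the unit sphere.

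For intermediate $l\in(0,1)$, I would apply Hölder's inequality with conjugate exponents $\frac{1}{l}$ and $\frac{1}{1-l}$ to the decomposition $\kappa^l\cdot 1$:
\[
    \phi_l(K)=\int_{\partial K}\kappa(K,x)^l\,\mathrm{d}H^{n-1}(x)\leq\left(\int_{\partial K}\kappa(K,x)\,\mathrm{d}H^{n-1}\right)^{l}\!\left(\int_{\partial K}1\,\mathrm{d}H^{n-1}\right)^{1-l}\!\!\!\!=\phi_1(K)^l\phi_0(K)^{1-l}.
\]
Combining with the two bounds above yields $\phi_l(K)\leq (n\omega_n)^l(n\omega_n)^{1-l}=n\omega_n$ uniformly on $\mathcal{K}(B)$.

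The only step that requires some care is the identification $\phi_1(K)\leq n\omega_n$, since one must recall that $\kappa(K,\cdot)$ is by definition the Radon-Nikodym density of the absolutely continuous part of $C_0(K,\cdot)$ with respect to $H^{n-1}\lfloor_{\partial K}$, so the integral of $\kappa$ is bounded by the total mass of $C_0(K,\mathbb{R}^n)$, which is in turn a dimensional constant; everything else is a direct application of Hölder and the monotonicity of surface area. No additional hypothesis beyond $K\subset B$ is needed.
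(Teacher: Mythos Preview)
Your approach is essentially the same as the paper's: Hölder with exponents $\tfrac{1}{l}$ and $\tfrac{1}{1-l}$, then bound $\int_{\partial K}\kappa\,\mathrm{d}H^{n-1}$ by $C_0(K,\mathbb{R}^n)=H^{n-1}(\mathbb{S}^{n-1})$ and $\int_{\partial K}1\,\mathrm{d}H^{n-1}$ by $H^{n-1}(\partial B)$ via monotonicity.

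One inaccuracy worth flagging: under the convention $0^0=0$, the integrand $\kappa^0$ is the indicator of $\{\kappa>0\}$, not of the twice-differentiable points, so $\phi_0(K)=H^{n-1}(\{x\in\partial K:\kappa(K,x)>0\})$ rather than $S(K)$; these differ whenever $\partial K$ has flat pieces (e.g.\ for polytopes, where $\phi_0=0$). Consequently the last equality in your displayed Hölder line is not literally correct, since $\bigl(\int_{\partial K}1\bigr)^{1-l}=S(K)^{1-l}$, not $\phi_0(K)^{1-l}$. This is harmless for the bound---either replace $\phi_0(K)$ by $S(K)$ there, or note that $\phi_l(K)=\int_{\{\kappa>0\}}\kappa^l$ and apply Hölder on that subset to recover your stated interpolation inequality exactly.
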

\begin{proof}
Since $\frac{1}{l}\in [1,\infty]$, by Hölder's inequality, we have
\[
    \phi_l(K)=\int_{\partial K}\kappa(K,x)^l\mathrm{d}H^{n-1}(x)=\|\kappa^l\|_1\leq \|\kappa^l\|_{\frac{1}{l}}\|1\|_{\frac{1}{1-l}}.
\]    
Clearly,
\[
    \|1\|_{\frac{1}{1-l}}=\left(\int_{\partial K}\mathrm{d}H^{n-1}\right)^{1-l}\leq H^{n-1}(\partial B)^{1-l}.
\]
On the other hand, since $\kappa$ is the density of the absolutely continuous part and the measures are positive we get that
\[
    \|\kappa^l \|_{\frac{1}{l}}=\left(\int_{\partial K}\kappa(K,x)\mathrm{d}H^{n-1}\right)^{l}\leq C_0(K,\mathbb{R}^n)^l= H^{n-1}(\partial B)^l.
\]
\end{proof}

Note that these valuations, $\phi_l$ with $l\in[0,1]$, provide counterexamples for the statement of Lemma \ref{l:counting} for any homogeneity degree $m\leq n-1$ proving that the result is, in this aspect, sharp. We summarize this discussion in the following:

\begin{teorema}
Let $V:\mathcal{K}^n\to\mathbb{R}$ be a translation invariant valuation which is bounded on $\mathcal{K}(B)$. If $V$ is $m$-homogeneous, then $m\in[0,n-1]\cup \{n\}$. Moreover, for each $m\in[0,n-1]\cup\{n\}$, there exists a translation invariant valuation $V$ on $\mathcal{K}^n $ that is bounded on $\mathcal{K}(B)$ and $m$-homogeneous.
\end{teorema}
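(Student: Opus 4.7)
The plan is to assemble the two claims from results already established in this section, filling in the $n=1$ and $m=n$ cases by hand.

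For the first claim (the constraint on $m$), the key is Proposition \ref{prop:homogeneidades}: boundedness on $\mathcal{K}(B)$ forces $m\geq 0$, since otherwise, given any $K\in\mathcal{K}(B)$ with $V(K)\neq 0$, the scaling $\lambda K\subset B$ with $\lambda\to 0^+$ would make $|V(\lambda K)|=\lambda^m|V(K)|$ blow up, contradicting $\sup_{K\subset B}|V(K)|<\infty$. For the remaining regime $m>n-1$ with $m\neq n$, note that $m\notin\{0,1,\dots,n\}$, so Lemma \ref{lema:Simple} ensures $V|_{\Pa(\{e_i\}_{i=1}^n)}\equiv 0$ for any basis $\{e_i\}_{i=1}^n$, and then Lemma \ref{l:counting}, whose hypothesis $m>n-1$ is met, upgrades this vanishing to $V\equiv 0$ on all of $\mathcal K^n$. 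A nonzero $m$-homogeneous $V$ therefore forces $m\in[0,n-1]\cup\{n\}$.

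For the second claim (existence), I would exhibit a nonzero valuation for each admissible degree. The case $m=n$ is handled by the Lebesgue measure $\mathrm{Vol}$, which is continuous, translation invariant, $n$-homogeneous, and bounded on $\mathcal K(B)$. For $n\geq 2$ and $m\in[0,n-1]$, set $l = 1 - m/(n-1)\in[0,1]$ and take $V = \phi_l$; the preceding lemmas in this section show that $\phi_l$ is a translation invariant valuation, bounded on $\mathcal K(B)$, and $(n-1)(1-l) = m$-homogeneous. The case $n=1$ is degenerate, since $[0,n-1]\cup\{n\}=\{0,1\}$: the Euler characteristic and the length provide examples for $m=0$ and $m=1$ respectively.

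There is no real technical obstacle, as both directions are essentially assembled from earlier work. The structural point worth emphasizing is that the forbidden gap $(n-1,n)$ in the conclusion is exactly the range where Lemma \ref{lema:Simple} (non-integer $m$ so the mapping vanishes on parallelotopes) and Lemma \ref{l:counting} ($m>n-1$ so this vanishing propagates) simultaneously apply; this same inequality $(n-1)(1-l)\leq n-1$ for $l\in[0,1]$ explains why the $\phi_l$ family saturates $[0,n-1]$ but cannot be extended into $(n-1,n)$, so that the two halves of the theorem match exactly.
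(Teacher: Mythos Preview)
Your proposal is correct and follows the same approach as the paper: Proposition \ref{prop:homogeneidades} for the constraint, and the family $\phi_l$ (plus volume) for existence. Your explicit handling of the case $n=1$ is a small addition the paper leaves implicit, but otherwise the argument is identical.
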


\section{Some negative results}\label{S:limitations}
In this section, we will show the limitations of the automatic continuity approach in the study of valuations. First we show that, in contrast with our main theorem —the characterization of volume— a similar generalization is not possible for McMullen’s Theorem \ref{teo:McMullenn-1} on $(n-1)$-homogeneous valuations. Afterwards, we will show some negative results regarding attempts to obtain a similar automatic continuity theorem in the context of valuations on convex bodies. We will provide counterexamples that illustrate the failure of certain natural extrapolations of the result to $\mathcal{K}^n$.

First, motivated by our generalization of Hadwiger’s characterization of the volume, we next show that an analogous extension of McMullen’s integral representation for valuations in $\val_{n-1}$ fails when the continuity condition is omitted.
Specifically, the failure of Lemma \ref{l:counting} for $m=n-1$ implies that no analogue of McMullen's Theorem \ref{teo:McMullenn-1} can exist for $(n-1)$-homogeneous, translation invariant and bounded valuations.

\begin{prop}
Let $V$ be a bounded valuation such that there exists a function $f:\mathbb{S}^{n-1}\to\mathbb{R}$ which is integrable with respect to $S_{n-1}(K)$ for every $K\in\mathcal K^n$ and satisfies
\[
    V(K)=\int_{\mathbb{S}^{n-1}}f\mathrm{d}S_{n-1}(K),
\]
for every $K\in\mathcal K^n$. If $V|_{\mathcal{P}^n}=0$, then $V=0$. 
\end{prop}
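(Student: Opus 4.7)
The plan is to translate the hypothesis $V|_{\mathcal{P}^n}=0$ into a linear constraint on $f$ via Minkowski's existence theorem, deduce that $f$ must be the restriction of a linear functional on $\mathbb{R}^n$, and then invoke the Minkowski identity $\int_{\mathbb{S}^{n-1}}u\,\mathrm{d}S_{n-1}(K)=0$ to conclude that $V$ vanishes on every convex body.

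First I would note that for any polytope $P\in\mathcal{P}^n$ with outer unit facet normals $u_1,\dots,u_k$ and corresponding $(n-1)$-dimensional facet volumes $\mu_1,\dots,\mu_k$, one has $S_{n-1}(P)=\sum_{i=1}^k\mu_i\delta_{u_i}$, so the hypothesis rewrites as $\sum_{i=1}^k\mu_i f(u_i)=0$. Minkowski's existence theorem ensures that every tuple $\{(\mu_i,u_i)\}$ with distinct $u_i\in\mathbb{S}^{n-1}$, $\mu_i>0$, $\sum\mu_iu_i=0$, and $\mathrm{span}\{u_i\}=\mathbb{R}^n$ is realised by some polytope, so the identity $\sum\mu_i f(u_i)=0$ holds under these three conditions.

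Next I would remove both the spanning and positivity restrictions. If $\{u_i\}$ lies in a proper subspace, I would adjoin a balanced pair $(c,v),(c,-v)$ with $v\in\mathbb{S}^{n-1}$ chosen outside that subspace and distinct from all $u_i$; each $c>0$ then produces $\sum\mu_i f(u_i)+c(f(v)+f(-v))=0$, and taking two values of $c$ and subtracting yields $f(v)+f(-v)=0$ together with $\sum\mu_i f(u_i)=0$. To handle signed weights $c_i\in\mathbb{R}$ with $\sum c_iu_i=0$, I would split $c_i=c_i^+-c_i^-$ so that $\sum c_i^+u_i=\sum c_i^-u_i=:w$; attaching the balancing atom $-w/|w|$ with weight $|w|$ to each side (or omitting it when $w=0$) yields two positive-weight configurations with vanishing first moment, and subtracting the previous-step identities for them gives $\sum c_i f(u_i)=0$.

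Linearity of $f$ is then immediate: fixing a basis $\{e_1,\dots,e_n\}\subset\mathbb{S}^{n-1}$ of $\mathbb{R}^n$ and defining $a\in\mathbb{R}^n$ by $\langle a,e_j\rangle:=f(e_j)$, the identity $\sum\lambda_je_j-u=0$ applied to any $u=\sum\lambda_je_j\in\mathbb{S}^{n-1}$ forces $f(u)=\langle a,u\rangle$. The Minkowski relation $\int_{\mathbb{S}^{n-1}}u\,\mathrm{d}S_{n-1}(K)=0$, valid for every $K\in\mathcal{K}^n$, now closes the argument:
\[
V(K)=\int_{\mathbb{S}^{n-1}}\langle a,u\rangle\,\mathrm{d}S_{n-1}(K)(u)=\Big\langle a,\int_{\mathbb{S}^{n-1}}u\,\mathrm{d}S_{n-1}(K)\Big\rangle=0.
\]
I expect the only technical difficulty to be the bookkeeping needed to verify that the augmented tuples satisfy the hypotheses of Minkowski's theorem (distinct normals, strictly positive weights, full span), which is routine; notably, the boundedness of $V$ appears to play no role in this particular argument.
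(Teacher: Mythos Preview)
Your argument is correct and, as you suspected, boundedness plays no role. The route, however, differs from the paper's. The paper first proves that $f$ is \emph{continuous}: it fixes a point $u_0\in\mathbb{S}^{n-1}$, takes a sequence $v_j\to u_0$, builds simplices $T_j$ (resp.\ $T$) inscribed in $\mathbb{S}^{n-1}$ with outer normals $v_j,-e_1,\dots,-e_n$ (resp.\ $u_0,-e_1,\dots,-e_n$), and reads off $f(v_j)\to f(u_0)$ from $V(T_j)=V(T)=0$ together with the convergence of the facet areas. Only then does it show $f(u)=\langle a,u\rangle$ for $u$ with all nonzero coordinates, and closes the gap on the remaining $u$'s by continuity. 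Your approach bypasses the continuity step entirely: you use the full strength of Minkowski's existence theorem to realise arbitrary balanced positive atomic measures as surface area measures of polytopes, and then remove the positivity and spanning constraints by the split/augment trick, obtaining $f(u)=\langle a,u\rangle$ for \emph{every} $u\in\mathbb{S}^{n-1}$ in one stroke. This is slicker and conceptually cleaner; the paper's version, on the other hand, works with a single explicit family of simplices and never invokes Minkowski's theorem, which some readers may find more self-contained. One small point in your write-up: when the span of $\{u_i\}$ has codimension larger than one, a single balanced pair $(c,v),(c,-v)$ will not suffice to reach full rank; you should either add several such pairs (in independent directions) and eliminate their coefficients one at a time, or phrase the step as an induction on the codimension. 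This is exactly the ``routine bookkeeping'' you flagged.
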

\begin{proof}
The proof is based on \cite[Theorem 1]{McMullen}, with minor modifications, we include it here for completeness.

Let $P\in \mathcal{P}^n$, we will denote $N(P)$ the set of outer unit normal vectors of the facets of $P$ and let $F(P,u)$ denote the facet that has $u$ as a unit normal vector. Note that for every $P\in \mathcal{P}^n$, $S_{n-1}(P)$ is a linear combination of point masses corresponding to $N(P)$. Therefore,
\[
    V(P)=\int_{\mathbb{S}^{n-1}}f\mathrm{d}S_{n-1}(P)=\sum_{u\in N(P)}H^{n-1}(F(P,u))f(u)=0.
\]

First, we will see that $f$ must be continuous. Let $(v_j)_{j\in\mathbb{N}}\subset\mathbb{S}^{n-1}$ a sequence of unit vectors that converge to $u_0$. Let us assume that $u_0=\frac{1}{\sqrt{n}}(1,\dots,1)$ and consider $u_i=-e_i$ for $1\leq i\leq m$. We define $T$ as the simplex inscribed inside of $\mathbb{S}^{n-1}$ that has $u_0,u_1,\dots,u_n$ as normal vectors. In a similar way, we define $T_j$ the simplex inscribed in $\mathbb{S}^{n-1}$ with $v_j,u_1,\dots,u_n$ as normal vectors. For $j$ big enough (in particular when all the coordinates of $v_j$ are positive) we have that $T_j$ is well defined. It is easy to see that $T_j\xrightarrow[j\to\infty]{}T$ in the Hausdorff metric. Also, it is clear that $F(T_j,u_i)\xrightarrow[j\to\infty]{} F(T,u_i)$ for $1\leq i\leq n$ and that $F(T_j,v_j)\xrightarrow[j\to\infty]{}F(T_j,u_0)$.

Since all $T$ and $T_j$ are polytopes, we get that $\varphi(T)=\varphi(T_j)=0$, and thus
\begin{align*}
        f(v_j)H^{n-1}(F(T_j,v_j))&+\sum_{i=1}^nf(u_i)H^{n-1}(F(T_j,u_i))= \\
        &=f(u_0)H^{n-1}(F(T,u_0))+\sum_{i=1}^n f(u_i)H^{n-1}(F(T,u_i)).
\end{align*}

Now, since $F(T_j,u_i)\to F(T,u_i)$ for $1\leq i\leq n$, $F(T_j,v_0)\to F(T,u_0)$ and $H^{n-1}(F(T,u_0))>0$, we get that $f(v_j)\to f(u_0)$. Thus, we get that $f$ is continuous at $u_0$.

If $u_0$ is not $\frac{1}{\sqrt{n}}(1,\dots,1)$ we can consider a rotation $g$ that moves $u_0$ there. Then $g(v_j)$ would be a sequence that converges to $g(u_0)$ and we could make the same construction and would get the result for any point on the sphere. Thus, $f$ is continuous.

Now, let us show that $f$ is linear: Let $\{e_1,\dots,e_n\}$ be an orthonormal basis of $\mathbb{R}^n$. If $P\in\mathcal{P}^n$ with $P\subset \{x\in\mathbb{R}^n:\langle x,e_i\rangle=0\}$, then
\[
    0=\varphi(P)= f(e_i)H^{n-1}(P)+f(-e_i)H^{n-1}(P)\implies f(e_i)=-f(-e_i).
\]
Let $u=\sum_{i=1}^n\eta_ie_i\in\mathbb{S}^{n-1}$ with $\eta_i\neq0$ for all $i=1,\dots,n$. We take $e_i'=-\mathrm{sign}(\eta_i)e_i$ and a simplex $T$ with $u,e_1',\dots,e_n'$ as its normal vectors. By \cite[Lemma 5.1.1]{Schneider}, taking $F_i=F(T,e_i')$ and $F_0=F(T,u)$, we have
\[
    H^{n-1}(F_0)u+\sum_{i=1}^nH^{n-1}(F_i)e_i'=\vec{0}.
\]
Now, taking the scalar product with $a=\sum_{j=1}^nf(e_j)e_j$ we get
\begin{align*}
    0&=H^{n-1}(F_0)\langle a,u\rangle+\sum_{i=1}^nH^{n-1}(F_i)\langle a,e_i'\rangle\\
    &=H^{n-1}(F_0)\langle a,u\rangle+\sum_{i=1}^nH^{n-1}(F_i)f(e_i'),
\end{align*}
since we already showed that $f(e_i)=-f(-e_i)$. On the other hand,
\[
    0=\varphi(T)=H^{n-1}(F_0)f(u)+\sum_{i=1}^nH^{n-1}(F_i)f(e_i').
\]
Combining both results we get that $f(u)=\langle a,u\rangle$ for any $u\in\mathbb{S}^{n-1}$ with all coordinates different from zero. Finally, from the continuity of $f$ we get the result.
\end{proof}


Let $\phi_0$ be the $(n-1)$-homogeneous bounded translation invariant valuation defined in the previous section which is given by
\[
\phi_0(K)=\int_{\partial K}\chi_{\{x\in \partial K:\kappa (K,x)>0\} }dH^{n-1}.
\]

\begin{coro}
There exists no function $f:\mathbb{S}^{n-1}\to\mathbb{R}$ such that
\[
    \phi_0(K)=\int_{\mathbb{S}^{n-1}}f\mathrm{d}S_{n-1}(K)
\]
for all $K\in\mathcal{K}^n$.
\end{coro}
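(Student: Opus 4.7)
The plan is to argue by contradiction, applying the previous proposition directly to $\phi_0$. Suppose such an $f$ exists. From the results of Section \ref{S:homogeneidad}, $\phi_0$ is a translation invariant valuation which is bounded on $\mathcal{K}(B)$, so the hypotheses of the previous proposition are satisfied once we check that $\phi_0$ vanishes on all polytopes.

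For $P\in\mathcal{P}^n$, the boundary $\partial P$ decomposes as the union of its facets together with a set of $H^{n-1}$-measure zero (the lower-dimensional faces). On each facet, $\partial P$ is flat, so the Gauss--Kronecker curvature $\kappa(P,\cdot)$ vanishes wherever it is defined. Hence the integrand $\chi_{\{\kappa(P,\cdot)>0\}}$ is zero $H^{n-1}$-a.e.\ on $\partial P$, giving $\phi_0(P)=0$. Therefore $\phi_0|_{\mathcal{P}^n}=0$, and the previous proposition forces $\phi_0\equiv 0$.

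The contradiction comes from evaluating $\phi_0$ on the unit ball $B$: since $\partial B$ has strictly positive Gauss--Kronecker curvature everywhere, the integrand equals $1$ on all of $\partial B$, so
\[
\phi_0(B)=\int_{\partial B}\chi_{\{\kappa(B,\cdot)>0\}}\,\mathrm{d}H^{n-1}=H^{n-1}(\partial B)>0,
\]
contradicting $\phi_0\equiv 0$. Thus no such $f$ can exist.

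There is no serious obstacle here; the only subtlety is verifying that $\phi_0$ genuinely vanishes on polytopes, which boils down to the elementary observation that the curvature of a flat facet is zero almost everywhere.
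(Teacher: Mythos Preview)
Your proof is correct and follows exactly the same route as the paper: assume an integral representation exists, use that $\phi_0|_{\mathcal{P}^n}=0$ (which the paper recorded back in Section~\ref{S:homogeneidad}), apply the preceding proposition to conclude $\phi_0\equiv 0$, and contradict this with $\phi_0(B)=H^{n-1}(\partial B)>0$. Your added justification for why $\phi_0$ vanishes on polytopes is fine and simply makes explicit what the paper had already noted.
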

\begin{proof}
Since $\phi_0|_{\mathcal{P}^n}=0$, by the previous result, it would imply that $\phi_0=0$, but $\phi_0(B)=H^{n-1}(\partial B)$.
\end{proof}

Now, we turn our attention to the search of a result similar to Theorem \ref{teo:CFE} in the context of valuations on $\mathcal{K}^n$. The question is the following one, does the Cauchy Functional equation with respect to Minkowski addition on $\mathcal{K}^n$ exhibit any automatic continuity? In other words, if $V:\mathcal{K}^n\to\mathbb{R}$ is a bounded valuation satisfying
\[
    V(K+L)=V(K)+V(L)
\]
must $V$ be continuous?

The answer is negative, as the following shows. Consider the mapping $\phi(K)=S_1^s(K)(\mathbb{S}^{n-1})$, where $S_1^s(K)$ is the singular part of $S_1(K)$ with respect to the Haar measure of $\mathbb{S}^{n-1}$. One can interpret this map as the composition of three mappings, $S_1$, $\mu\mapsto\mu^s$ and $\langle \cdot, 1\rangle$ in the following way
\[
    \begin{array}{rccccccc}
        \phi: & \mathcal{K}^n&\xrightarrow[]{S_1} &\mathcal{M}(\mathbb{S}^{n-1}) & \xrightarrow[]{^s} &\mathcal{M}(\mathbb{S}^{n-1}) &\xrightarrow[]{\langle\cdot, 1\rangle}&\mathbb{R} \\
         & K&\longmapsto &S_1(K) & \longmapsto & S_1^s(K) & \longmapsto & S_1^s(K)(\mathbb{S}^{n-1}).
    \end{array}
\]
The mapping $S_1:\mathcal{K}^n\to\mathcal{M}(\mathbb{S}^{n-1})$ is well-known to be Minkowski-additive, that is
\[
    S_1(K+L)=S_1(K)+S_1(L).
\]
Similarly, the mapping $\mu\mapsto\mu^s$, which gives the singular part of any measure with respect to the Haar measure, is linear. This follows from the uniqueness of the Lebesgue decomposition \cite[Theorem 4.3.2]{Cohn}. Finally, the last mapping is defined as
\[
    \langle \cdot , 1\rangle(\mu)=\int_{\mathbb{S}^{n-1}}1\mathrm{d}\mu=\mu(\mathbb{S}^{n-1}).
\]
Clearly, this mapping is also linear, so $\phi$ is a Minkowski-additive valuation. Note however, this mapping is not continuous since $\phi(B)=0$ while $\phi(P)=S_1(P)(\mathbb{S}^{n-1})$ for any $P\in\mathcal{P}^n$. Also, this valuation is bounded: for $K\subset B$ we have that
\[
    S_1^s(K)(\mathbb{S}^{n-1})\leq S_1(K)(\mathbb{S}^{n-1})\leq S_1(B)(\mathbb{S}^{n-1}).
\]
Thus, an automatic continuity result as that of Theorem \ref{teo:CFE} is not possible in this setting.

Besides Minkowski addition, Blaschke addition is another semigroup operation inside $\mathcal{K}^n$, that has proven to be quite useful. For the definition we refer the reader to \cite{Schneider} and \cite{McMullen}. We will see that a valuation that is Blaschke additive and bounded does not have to be continuous. To exhibit such an example, we will make use of McMullen's Characterization of $(n-1)$-homogeneous continuous and translation invariant valuations, Theorem \ref{teo:McMullenn-1}.  The idea is to build valuations via the surface area measure, $S_{n-1}$, which is Blaschke additive, and then modify the function in the integral representation to obtain a valuation that is Borel measurable and bounded but not continuous.

\begin{prop}\label{prop:borel}
Let $f$ be a bounded Borel function on the sphere $\mathbb{S}^{n-1}$. Then the following valuation
\[
    \phi_f(K)=\int_{\mathbb{S}^{n-1}}f(x)\mathrm{d}S_{n-1}(K,x)
\]
is in $\val_{n-1}$ if and only if $f$ is continuous.
\end{prop}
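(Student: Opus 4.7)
The ``if'' direction is routine. The surface area measure $K\mapsto S_{n-1}(K,\cdot)$ is a translation invariant, $(n-1)$-homogeneous, measure-valued valuation that depends weakly continuously on $K$ (see \cite[Chapter 4]{Schneider}). Consequently, for any bounded Borel $f$, the scalar-valued map $\phi_f$ inherits translation invariance, $(n-1)$-homogeneity and the valuation property automatically from the corresponding properties of $S_{n-1}$; and when $f$ is continuous, the weak convergence $S_{n-1}(K_j,\cdot)\to S_{n-1}(K,\cdot)$ yields $\phi_f(K_j)\to \phi_f(K)$, so $\phi_f\in \val_{n-1}$.

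For the nontrivial ``only if'' direction, assume $\phi_f\in \val_{n-1}$; in particular $\phi_f$ is continuous on $\mathcal{K}^n$. The plan is to recycle the simplex construction from the proof of the preceding proposition, where the hypothesis $V|_{\mathcal{P}^n}=0$ is replaced by the continuity of $\phi_f$ on all of $\mathcal{K}^n$. Fix $u_0\in \mathbb{S}^{n-1}$ and a sequence $v_j\to u_0$; the goal is to show $f(v_j)\to f(u_0)$. After applying a suitable rotation, we may assume $u_0=\frac{1}{\sqrt{n}}(1,\ldots,1)$, so that $v_j$ eventually lies in the open positive orthant. Let $T$ (respectively $T_j$) denote the simplex inscribed in $\mathbb{S}^{n-1}$ with outer facet normals $u_0,-e_1,\ldots,-e_n$ (respectively $v_j,-e_1,\ldots,-e_n$). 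For $j$ large these are nondegenerate simplices, $T_j\to T$ in the Hausdorff metric, $F(T_j,-e_i)\to F(T,-e_i)$ for $1\leq i\leq n$, and $F(T_j,v_j)\to F(T,u_0)$.

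Since $T_j,T\in\mathcal{P}^n$, the measure $S_{n-1}(P,\cdot)=\sum_{u\in N(P)}H^{n-1}(F(P,u))\,\delta_u$ is a finite sum of point masses, so
\[
\phi_f(T_j)=f(v_j)H^{n-1}(F(T_j,v_j))+\sum_{i=1}^n f(-e_i)\,H^{n-1}(F(T_j,-e_i)),
\]
with the analogous expression for $\phi_f(T)$ replacing $v_j$ by $u_0$. Continuity of $\phi_f$ yields $\phi_f(T_j)\to \phi_f(T)$; since $f$ is bounded and each $H^{n-1}(F(T_j,-e_i))$ converges to its $T$-counterpart, the sums over $i$ cancel in the limit. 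As $H^{n-1}(F(T,u_0))>0$ and $H^{n-1}(F(T_j,v_j))\to H^{n-1}(F(T,u_0))$, dividing gives $f(v_j)\to f(u_0)$, which is continuity of $f$ at $u_0$. The only mild obstacle is ensuring the simplex $T_j$ is genuinely $n$-dimensional for $j$ large, which is precisely why the rotation placing $u_0$ in the positive orthant is introduced; once this is arranged, the argument is a direct continuity-driven variant of the polytopal calculation from the preceding proposition.
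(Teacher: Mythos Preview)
Your proof is correct and takes a genuinely different route from the paper's. For the ``only if'' direction, the paper invokes McMullen's representation theorem (Theorem~\ref{teo:McMullenn-1}) to produce a continuous $g$ with $\phi_f=\phi_g$ on all of $\mathcal{K}^n$; then, using that every balanced measure on $\mathbb{S}^{n-1}$ is a difference of two surface area measures, it concludes via the Bipolar Theorem that $f-g$ lies in the finite-dimensional space of restrictions of linear functionals, hence is continuous. Your argument bypasses both McMullen's theorem and the functional-analytic step: you adapt the simplex construction from the preceding proposition directly, replacing the hypothesis $V|_{\mathcal{P}^n}=0$ there by the continuity of $\phi_f$, and observe that the ``side'' terms $f(-e_i)\,H^{n-1}(F(T_j,-e_i))$ converge simply because the values $f(-e_i)$ are fixed numbers while the facet areas converge. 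This is more elementary and self-contained; the paper's approach, on the other hand, identifies $f$ up to a linear function and ties the statement directly to the uniqueness clause in McMullen's theorem. One small remark: your ``after a rotation'' reduction implicitly uses the rotation equivariance $S_{n-1}(gK,g\,\cdot\,)=S_{n-1}(K,\cdot)$, which ensures that $\phi_{f\circ g^{-1}}$ inherits continuity from $\phi_f$; alternatively, one may simply choose the orthonormal frame $e_1,\dots,e_n$ so that $u_0$ has all coordinates positive, avoiding the rotation altogether.
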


\begin{proof}
Assume that $\phi_f\in\val_{n-1}$. By McMullen's Theorem \ref{teo:McMullenn-1}, we get that there exists a continuous function $g$ on the sphere such that
\[
    \phi_f(K)=\int g\mathrm{d}S_{n-1}(K)=\int f\mathrm{d}S_{n-1}(K).
\]
for any $K\in\mathcal{K}^n$. Then we get that 
\[
    \int (f-g)\mathrm{d}S_{n-1}(K)=0
\]
for any $K\in\mathcal{K}^n$. Since any balanced measure can be written as the difference of two surface area measures (check \cite[Theorem 3]{McMullen})
we get that
\[
    \int (f-g)\mathrm{d}\mu=0
\]
for any balanced measure $\mu$.

Now, let $L=\{\langle u, \cdot \rangle, u\in\mathbb{R}^n\}\subset C(\mathbb{S}^{n-1})$ be the subspace of linear functions. Note that this has finite dimension, which means that it is closed in all vector topologies. Now, the polar of $L$ is exactly the set of balanced measures and $f-g$ is in the polar of balanced measures. By the Bipolar Theorem (see \cite[Theorem 3.38]{Fabian}) we get that $f-g$ is in $L$, that is, $f-g$ is continuous and, since $g$ was continuous, we get that $f$ is continuous.

The converse result is immediate from the fact (see \cite[Theorem 4.2.1]{Schneider}) that convergence of convex bodies $K_j\xrightarrow[]{}K$ in the Hausdorff metric implies that, for every continuous $f:\mathbb{S}^{n-1}\to\mathbb{R}$, 
\[
    \int_{\mathbb{S}^{n-1}}f\mathrm{d}S_{n-1}(K_j)\xrightarrow{}\int_{\mathbb{S}^{n-1}}f\mathrm{d}S_{n-1}(K).
\]
\end{proof}

\begin{lema}
Let $f$ be a bounded Borel function on the sphere, then the following valuation is measurable
\[
    \phi_f(K)=\int_{\mathbb{S}^{n-1}}f\mathrm{d}S_{n-1}(K).
\]
\end{lema}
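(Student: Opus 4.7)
The plan is to reduce the problem to the continuous case (already handled by Proposition~\ref{prop:borel} together with \cite[Theorem 4.2.1]{Schneider}) and then promote this to all bounded Borel $f$ via a standard functional monotone class argument. The key external ingredient is that the surface area measure map $K\mapsto S_{n-1}(K)$ is weakly continuous: if $K_j\to K$ in the Hausdorff metric, then $\int g\, dS_{n-1}(K_j)\to \int g\, dS_{n-1}(K)$ for every $g\in C(\mathbb{S}^{n-1})$. In particular, for continuous $f$ the map $\phi_f:\mathcal{K}^n\to\mathbb{R}$ is continuous, hence Borel.

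Consider the class
\[
    \mathcal{H}=\{f:\mathbb{S}^{n-1}\to\mathbb{R} \text{ bounded Borel} : \phi_f \text{ is Borel measurable on } \mathcal{K}^n\}.
\]
First I would verify that $\mathcal{H}$ is a vector space containing the constants and $C(\mathbb{S}^{n-1})$ (the latter by the continuity observation just above, and linearity of the integral). Next, the central step is to show that $\mathcal{H}$ is closed under bounded pointwise limits: if $(f_k)\subset \mathcal{H}$ with $\sup_k \|f_k\|_\infty<\infty$ and $f_k\to f$ pointwise on $\mathbb{S}^{n-1}$, then by the dominated convergence theorem (applied for each fixed $K$ to the finite measure $S_{n-1}(K)$) one has $\phi_{f_k}(K)\to\phi_f(K)$ for every $K\in\mathcal{K}^n$. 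Since a pointwise limit of Borel measurable real-valued functions is Borel, this gives $\phi_f\in\mathcal{H}$.

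Finally, I would invoke the functional monotone class theorem (or equivalently Dynkin's $\pi$--$\lambda$ argument): since $C(\mathbb{S}^{n-1})$ is a vector space of bounded functions, closed under uniform convergence and containing the constants, and since the Borel $\sigma$-algebra on $\mathbb{S}^{n-1}$ is generated by $C(\mathbb{S}^{n-1})$ (open sets can be written as pointwise limits of continuous functions, and indicators of Borel sets follow), any class $\mathcal{H}$ containing $C(\mathbb{S}^{n-1})$ and closed under bounded pointwise limits must contain every bounded Borel function. This yields $\phi_f\in\mathcal{H}$ for every bounded Borel $f$, which is exactly the claim.

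The only real point of care is the third step, the pointwise closure; the verification is routine once one remembers that $S_{n-1}(K)$ is a finite measure on $\mathbb{S}^{n-1}$ (so bounded pointwise convergence of $f_k$ implies convergence of the integrals for each fixed $K$) and that Borel measurability of $\mathcal{K}^n\to\mathbb{R}$ is preserved under pointwise limits. Everything else is bookkeeping, and no appeal to translation invariance or homogeneity is needed.
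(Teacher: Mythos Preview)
Your proof is correct and follows essentially the same route as the paper: start from continuous $f$ (where $\phi_f$ is continuous by weak convergence of surface area measures) and extend to all bounded Borel $f$ by a monotone-class/Dynkin argument using dominated convergence for each fixed $K$. The only cosmetic difference is that the paper runs the Dynkin argument at the level of sets (showing $\{A:\phi_{\chi_A}\text{ is Borel}\}$ is a Dynkin class containing the open sets, then passing to simple and general functions), whereas you invoke the functional monotone class theorem directly on $\mathcal{H}$, which is a bit more economical.
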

\begin{proof}
First we will see that for any Borel subset $A\subset \mathbb{S}^{n-1}$, its characteristic function, $\chi_A$, satisfies the lemma. Let $\mathcal{D}=\{A\in\mathcal{B}(\mathbb{S}^{n-1}): \phi_{\chi_A}\text{ is measurable}\}$, we will see that the Borel sets $\mathcal{B}(\mathbb{S}^{n-1})$ is equal to $\mathcal{D}$ by checking that $\mathcal{D}$ is a Dynkin Class that contains the open sets.

It is clear, that both $\emptyset$ and $\mathbb{S}^{n-1}\in\mathcal{D}$. We now check that the open sets are in $\mathcal{D}$. Consider a proper non-empty open set $A\subset\mathbb{S}^{n-1}$. It is well known that its indicator function $\chi_A$ is the pointwise limit of an increasing sequence of continuous functions $f_m$ with $0\leq f_m\leq f_{m+1}\leq \chi_A$ on the sphere. We define the following mapping 
\[
    \phi_{\chi_A}(K)=\int \chi_A(x)\mathrm{d}S_{n-1}(K,x),
\]
which is obviously a translation invariant valuation and $(n-1)$-homogeneous. By McMullen's Theorem, we know that
\[
    \phi_m(K)\coloneqq \int f_m(x)\mathrm{d}S_{n-1}(K,x)
\]
are continuous valuations and, in particular, Borel. Then, by the Dominated Convergence Theorem, given $K\in\mathcal{K}^n$ we get
\[
    \lim_{m\to\infty}\phi_m(K)=\int \lim_{m\to\infty}f_m(x)\mathrm{d}S_{n-1}(K,x)=\phi_{\chi_A}(K).
\]
Thus, $\phi_{\chi_A}$ is the pointwise limit of Borel functions and is itself Borel, showing that the open sets are inside $\mathcal{D}$.

Now, given $A_1,A_2\in\mathcal{D}$ with $A_1\subset A_2$, it is easy to see that
\[
    \phi_{\chi_{A_2}}-\phi_{\chi_{A_1}}=\phi_{\chi_{A_2\setminus A_1}},
\]
which implies that $A_2\setminus A_1\in\mathcal{D}$. Next, let $\{A_j\}_{j\in\mathbb{N}}\subset\mathcal{D}$ and $A_j\subset A_{j+1}$ for every $j\in\mathbb{N}$. It is clear that
\[
    \chi_{\bigcup_{j\in\mathbb{N}}A_j}(x)=\lim_{m\to\infty}\chi_{A_m}(x),
\]
where $\chi_{A_m}$ are measurable, non negative and increasing. Thus, by the Monotone Convergence Theorem, for any $K\in\mathcal{K}^n$ 
\[
    \phi_{\chi_{\bigcup_{j\in\mathbb{N}}A_j}}(K)=\lim_{m\to\infty} \phi_{\chi_{A_m}}(K).
\]
Therefore, $\phi_{\chi_{\bigcup_{j\in\mathbb{N}}A_j}}$ is the pointwise limit of measurable functions and thus measurable. These facts show that $\mathcal{D}$ is a Dynkin class that contains all open sets. By a classical result of Dynkin classes (check \cite[Chapter 1.6]{Cohn}), all Borel sets belong to $\mathcal{D}$.

Now that we have established the result for characteristic functions, it clearly extends to simple functions, and by standard measure theoretic arguments the general case follows.
\end{proof}

Note that combining the two results above shows that if $f:\mathbb{S}^{n-1}\to\mathbb{R}$ a Borel and bounded function that is not continuous, then
\[
    \phi_f(K)=\int_{\mathbb{S}^{n-1}}f\mathrm{d}S_{n-1}(K)
\]
is a translation-invariant, $(n-1)$-homogeneous, Blaschke additive valuation that is bounded and Borel, but it is not continuous.

\printbibliography[heading=bibintoc]

@book {Schneider,
 Author = {Schneider, R.},
 Title = {Convex bodies: the {Brunn}-{Minkowski} theory},
 Edition = {2nd expanded ed.},
 FSeries = {Encyclopedia of Mathematics and Its Applications},
 Series = {Encycl. Math. Appl.},
 Volume = {151},
 Year = {2014},
 Publisher = {Cambridge: Cambridge University Press},
 Language = {English},
 Keywords = {52-02,52A20,52A39,52A40,52A22,33C55},
 zbMATH = {6288252},
 Zbl = {1287.52001}
}

@article {McMullen,
 Author = {McMullen, P.},
 Title = {Continuous translation invariant valuations on the space of compact convex sets},
 FJournal = {Archiv der Mathematik},
 Journal = {Archiv der Mathematik},
 Volume = {34},
 Number = {1},
 Pages = {377--384},
 Year = {1980},
 Language = {English},
}

@article {Dehn,
 Author = {Dehn, M.},
 Title = {{\"U}ber den {Rauminhalt}.},
 FJournal = {Mathematische Annalen},
 Journal = {Math. Ann.},
 Volume = {55},
 Number = {3},
 Pages = {465--478},
 Year = {1901},
 Language = {German},
}

@book {Cauchy,
    AUTHOR = {Cauchy, A-L},
     TITLE = {Cours d'analyse de l'\'Ecole {R}oyale {P}olytechnique.
              {P}remi\`ere partie},
      NOTE = {Analyse alg\'ebrique. [Algebraic analysis]},
      YEAR = {1821},
    PUBLISHER = {},
}

@article {Hadwiger,
 Author = {Hadwiger, H.},
 Title = {{V}orlesungen \"uber {I}nhalt, {O}berfl\"ache und {I}soperimetrie},
 NOTE = {Springer},
Year={1957},
    JOURNAL = {},
}

@article{Hamel,
    author ={Hamel, G.} ,
    title ={Eine Basis aller Zahlen und die unstetigen L\"osungen der Funktionalgleichung: f(x + y) = f(x) + f(y).} ,
    journal ={Math. Ann.},
    Volume ={60},
    Number={3},
    Pages={459--462},
    year = {1905},
}

@book {Kuczma,
    AUTHOR = {Kuczma, M.},
     TITLE = {An introduction to the theory of functional equations and
              inequalities},
    SERIES = {Scientific Publications of the University of Silesia},
    VOLUME = {489},
 PUBLISHER = {Uniwersytet \'Slpolhk aski, Katowice; PWN, Warsaw},
      YEAR = {1985},
     PAGES = {523},
}

@article {RosendalAC,
    AUTHOR = {Rosendal, C.},
     TITLE = {Automatic continuity of group homomorphisms},
   JOURNAL = {Bull. Symbolic Logic},
  FJOURNAL = {The Bulletin of Symbolic Logic},
    VOLUME = {15},
      YEAR = {2009},
    NUMBER = {2},
     PAGES = {184--214},
}

@article{RosendalPI,
 author = {Rosendal, C.},
 title = {Continuity of universally measurable homomorphisms},
 fjournal = {Forum of Mathematics, Pi},
 journal = {Forum Math. Pi},
 volume = {7},
 pages = {20},
 note = {Id/No e5},
 year = {2019},
 language = {English},
 keywords = {03E15,22A05,28A20,43A05},
 zbMATH = {7093988},
 Zbl = {1539.03159}
}

@book {Klain,
    AUTHOR = {Klain, D. A. and Rota, G-C},
     TITLE = {Introduction to geometric probability},
    SERIES = {Lincei Lectures.},
 PUBLISHER = {Cambridge University Press, Cambridge},
      YEAR = {1997},
     PAGES = {xiv+178},
}

@book {Fabian,
    AUTHOR = {Fabian, M. and Habala, P. and H\'{a}jek, P. and Montesinos
              Santaluc\'{\i}a, V. and Pelant, J. and Zizler, V.},
     TITLE = {Functional analysis and infinite-dimensional geometry},
    SERIES = {CMS Books in Mathematics},
    VOLUME = {8},
 PUBLISHER = {Springer-Verlag, New York},
      YEAR = {2001},
     PAGES = {x+451},
}

@book {Alesker,
    AUTHOR = {Alesker, S.},
     TITLE = {Introduction to the theory of valuations},
    SERIES = {CBMS Regional Conference Series in Mathematics},
    VOLUME = {126},
 PUBLISHER = {Conference Board of the Mathematical Sciences
              by the American Mathematical Society, Providence, RI},
      YEAR = {2018},
     PAGES = {vi+83},
}

@book {Cohn,
    AUTHOR = {Cohn, D. L.},
     TITLE = {Measure theory},
    SERIES = {Birkh\"auser Advanced Texts: Basel Textbooks.},
   EDITION = {Second},
 PUBLISHER = {Birkh\"auser/Springer, New York},
      YEAR = {2013},
     PAGES = {xxi+457},
}

@book{kechris,
 author = {Kechris, A. S.},
 title = {Classical descriptive set theory},
 fseries = {Graduate Texts in Mathematics},
 series = {Grad. Texts Math.},
 volume = {156},
 year = {1995},
 publisher = {Berlin: Springer-Verlag},
 language = {English},
 keywords = {03E15,03-01,28A05,03-02,91A44,54H05},
 zbMATH = {722611},
 Zbl = {0819.04002}
}

@book {sigmacont,
     TITLE = {Tensor valuations and their applications in stochastic
              geometry and imaging},
    SERIES = {Lecture Notes in Mathematics},
    VOLUME = {2177},
    EDITOR = {Jensen, E. B. V. and Kiderlen, M.},
 PUBLISHER = {Springer, Cham},
      YEAR = {2017},
     PAGES = {xiv+460},
}

@article {Alexandroff,
    AUTHOR = {Alexandrov, A. D.},
     TITLE = {Almost everywhere existence of the second differential of a
              convex function and some properties of convex surfaces
              connected with it},
   JOURNAL = {Leningrad State Univ. Annals [Uchenye Zapiski] Math. Ser.},
  FJOURNAL = {Leningrad State Univ. Annals [Uchenye Zapiski] Math. Ser.},
    VOLUME = {6},
      YEAR = {1939},
     PAGES = {3--35},
}

@article {Hug,
    AUTHOR = {Hug, D.},
     TITLE = {Absolute continuity for curvature measures of convex sets.
              {I}},
   JOURNAL = {Math. Nachr.},
  FJOURNAL = {Mathematische Nachrichten},
    VOLUME = {195},
      YEAR = {1998},
     PAGES = {139--158},
}

@article {Schutt,
    AUTHOR = {Sch\"utt, C.},
     TITLE = {On the affine surface area},
   JOURNAL = {Proc. Amer. Math. Soc.},
  FJOURNAL = {Proceedings of the American Mathematical Society},
    VOLUME = {118},
      YEAR = {1993},
    NUMBER = {4},
     PAGES = {1213--1218},
}

@book {pushforward,
    AUTHOR = {Bogachev, V. I.},
     TITLE = {Measure theory. {V}ol. {I}, {II}},
 PUBLISHER = {Springer-Verlag, Berlin},
      YEAR = {2007},
     PAGES = {Vol. I: xviii+500 pp., Vol. II: xiv+575},
}

@article {Reticulos,
    AUTHOR = {Tradacete, P. and Villanueva, I.},
     TITLE = {Valuations on {B}anach lattices},
   JOURNAL = {Int. Math. Res. Not. IMRN},
  FJOURNAL = {International Mathematics Research Notices. IMRN},
      YEAR = {2020},
    NUMBER = {1},
     PAGES = {287--319},
}

@incollection {MonikayFabian,
    AUTHOR = {Ludwig, M. and Mussnig, F.},
     TITLE = {Valuations on convex bodies and functions},
 BOOKTITLE = {Convex geometry},
    SERIES = {Lecture Notes in Math.},
    VOLUME = {2332},
     PAGES = {19--78},
 PUBLISHER = {Springer, Cham},
      YEAR = {2023},
}

@article {Estrellado2,
    AUTHOR = {Tradacete, P. and Villanueva, I.},
     TITLE = {Continuity and representation of valuations on star bodies},
   JOURNAL = {Adv. Math.},
  FJOURNAL = {Advances in Mathematics},
    VOLUME = {329},
      YEAR = {2018},
     PAGES = {361--391},
}

@article {Alesker1,
    AUTHOR = {Alesker, S.},
     TITLE = {Description of translation invariant valuations on convex sets
              with solution of {P}. {M}c{M}ullen's conjecture},
   JOURNAL = {Geom. Funct. Anal.},
  FJOURNAL = {Geometric and Functional Analysis},
    VOLUME = {11},
      YEAR = {2001},
    NUMBER = {2},
     PAGES = {244--272},
}

@incollection {Debiladitiva,
    AUTHOR = {McMullen, P. and Schneider, R.},
     TITLE = {Valuations on convex bodies},
 BOOKTITLE = {Convexity and its applications},
     PAGES = {170--247},
 PUBLISHER = {Birkh\"auser, Basel},
      YEAR = {1983},
}

@book {LibroHug,
    AUTHOR = {Hug, D. and Weil, W.},
     TITLE = {Lectures on convex geometry},
    SERIES = {Graduate Texts in Mathematics},
    VOLUME = {286},
 PUBLISHER = {Springer, Cham},
      YEAR = {2020},
     PAGES = {xviii+287},
}

@book {Handbook,
     TITLE = {Handbook of convex geometry. {V}ol. {A}, {B}},
    EDITOR = {Gruber, P. M. and Wills, J. M.},
 PUBLISHER = {North-Holland Publishing Co., Amsterdam},
      YEAR = {1993},
     PAGES = {Vol. A: lxvi+735 pp.; Vol. B: pp. i--lxvi and 737--1438},
}

@incollection {Bernig,
    AUTHOR = {Bernig, A.},
     TITLE = {Algebraic integral geometry},
 BOOKTITLE = {Global differential geometry},
    SERIES = {Springer Proc. Math.},
    VOLUME = {17},
     PAGES = {107--145},
 PUBLISHER = {Springer, Heidelberg},
      YEAR = {2012},
}

@article{logconcavas,
    author ={Mussnig, F.} ,
    title = {Valuations on log-concave functions.},
    journal = {J. Geom Anal},
    VOLUME ={37},
    year = {2021},
    PAGES = {6427--6451},
}

@article{equivariantes,
    author = {Hofstätter, G. and Knoerr, J.},
    title ={Equivariant valuations on convex functions} ,
    journal = {Calculus of Variations and Partial Differential Equations} ,
    Volume ={64},
    year = {2025},
}

@article{BernigFu2011,
 author = {Bernig, A. and Fu, J. H. G.},
 title = {Hermitian integral geometry},
 fjournal = {Annals of Mathematics. Second Series},
 journal = {Ann. Math. (2)},
 volume = {173},
 number = {2},
 pages = {907--945},
 year = {2011},
 language = {English},
 keywords = {52A22,53C65},
 zbMATH = {5960674},
 Zbl = {1230.52014}
}

@article{Haberl,
 author = {Haberl, C.},
 title = {Minkowski valuations intertwining the special linear group},
 fjournal = {Journal of the European Mathematical Society (JEMS)},
 journal = {J. Eur. Math. Soc. (JEMS)},
 volume = {14},
 number = {5},
 pages = {1565--1597},
 year = {2012},
 language = {English},
 keywords = {52B45,52A20,52A30,53A15},
 zbMATH = {6095891},
 Zbl = {1270.52018}
}

@article{LudwigReitzner1999,
 author = {Ludwig, M. and Reitzner, M.},
 title = {A characterization of affine surface area},
 fjournal = {Advances in Mathematics},
 journal = {Adv. Math.},
 volume = {147},
 number = {1},
 pages = {138--172},
 year = {1999},
 language = {English},
 keywords = {52A20,53A15},
 zbMATH = {1389129},
 Zbl = {0947.52003}
}

@article{LudwigReitzner2010,
 author = {Ludwig, M. and Reitzner, M.},
 title = {A classification of {SL}{{\((n)\)}} invariant valuations},
 fjournal = {Annals of Mathematics. Second Series},
 journal = {Ann. Math. (2)},
 volume = {172},
 number = {2},
 pages = {1219--1267},
 year = {2010},
 language = {English},
 keywords = {52B45},
 zbMATH = {5808585},
 Zbl = {1223.52007}
}

@article{ColesantiLudwigMussnig2024,
 author = {Colesanti, A. and Ludwig, M. and Mussnig, F.},
 title = {The {Hadwiger} theorem on convex functions. {I}},
 fjournal = {Geometric and Functional Analysis. GAFA},
 journal = {Geom. Funct. Anal.},
 volume = {34},
 number = {6},
 pages = {1839--1898},
 year = {2024},
 language = {English},
 keywords = {26B25,49Q20,52A41,52A39,52B45},
 zbMATH = {7953384}
}

@article{Alesker1999,
 author = {Alesker, S.},
 title = {Continuous rotation invariant valuations on convex sets},
 fjournal = {Annals of Mathematics. Second Series},
 journal = {Ann. Math. (2)},
 volume = {149},
 number = {3},
 pages = {977--1005},
 year = {1999},
 language = {English},
 keywords = {52A20},
 zbMATH = {1347801},
 Zbl = {0941.52002}
}

@article{Knoerr24,
 author = {Knoerr, J.},
 title = {Monge-Amp{\`e}re operators and valuations},
 fjournal = {Calculus of Variations and Partial Differential Equations},
 journal = {Calc. Var. Partial Differ. Equ.},
 volume = {63},
 number = {4},
 pages = {34},
 note = {Id/No 89},
 year = {2024},
 language = {English},
 keywords = {35J96,52A41},
 zbMATH = {7828208},
 Zbl = {1541.35267}
}

@article {Equidecomposability,
    AUTHOR = {Kusejko, K. and Parapatits, L.},
     TITLE = {A valuation-theoretic approach to
              translative-equidecomposability},
   JOURNAL = {Adv. Math.},
  FJOURNAL = {Advances in Mathematics},
    VOLUME = {297},
      YEAR = {2016},
     PAGES = {174--195},
}

@article {Monika-noncontinuous,
    AUTHOR = {Ludwig, M.},
     TITLE = {Valuations of polytopes containing the origin in their
              interiors},
   JOURNAL = {Adv. Math.},
  FJOURNAL = {Advances in Mathematics},
    VOLUME = {170},
      YEAR = {2002},
    NUMBER = {2},
     PAGES = {239--256},
}

@article {Haberl-Parapatits,
    AUTHOR = {Haberl, C. and Parapatits, L.},
     TITLE = {The centro-affine {H}adwiger theorem},
   JOURNAL = {J. Amer. Math. Soc.},
  FJOURNAL = {Journal of the American Mathematical Society},
    VOLUME = {27},
      YEAR = {2014},
    NUMBER = {3},
     PAGES = {685--705},

}

@article {Curvature-Integrals,
    AUTHOR = {Ludwig, M.},
     TITLE = {On the semicontinuity of curvature integrals},
   JOURNAL = {Math. Nachr.},
  FJOURNAL = {Mathematische Nachrichten},
    VOLUME = {227},
      YEAR = {2001},
     PAGES = {99--108},

}

@article {Planar,
    AUTHOR = {Ludwig, M.},
     TITLE = {Upper semicontinuous valuations on the space of convex discs},
   JOURNAL = {Geom. Dedicata},
  FJOURNAL = {Geometriae Dedicata},
    VOLUME = {80},
      YEAR = {2000},
    NUMBER = {1-3},
     PAGES = {263--279},
}

@article {SLinvariantPolitopes,
    AUTHOR = {Ludwig, M. and Reitzner, M.},
     TITLE = {{${\rm SL}(n)$} invariant valuations on polytopes},
   JOURNAL = {Discrete Comput. Geom.},
  FJOURNAL = {Discrete \& Computational Geometry. An International Journal
              of Mathematics and Computer Science},
    VOLUME = {57},
      YEAR = {2017},
    NUMBER = {3},
     PAGES = {571--581},
}

@article {SLcovariance,
    AUTHOR = {Haberl, C. and Parapatits, L.},
     TITLE = {Moments and valuations},
   JOURNAL = {Amer. J. Math.},
  FJOURNAL = {American Journal of Mathematics},
    VOLUME = {138},
      YEAR = {2016},
    NUMBER = {6},
     PAGES = {1575--1603},
}

\appendix

\section{General valuation on parallelotopes }\label{S:AppendixA}

In Section \ref{S:par}, we proved an automatic continuity result (Theorem \ref{teo:valuaciones_continuidad}) for translation invariant valuations on $\Pa(\{e_i\}_{i=1}^n)$. In particular, we established the characterization Theorem \ref{t:McMullen-pa} that led to the proof of the aforementioned result. In this appendix, we present a different characterization theorem \ref{t:val on pa} for all valuation on $\Pa(\{e_i\}_{i=1}^n)$. From this result, we can provide several examples that show automatic continuity may fail on $\Pa(\{e_i\}_{i=1}^n)$. More specifically, we provide examples of bounded Borel valuations which are not continuous.

It is not surprising that unlike in the situation of translation invariant valuations on $\Pa(\{e_i\}_{i=1}^n)$, where the valuation only depends on the side lengths of a paralellotope, the characterization for arbitrary valuations on $\Pa(\{e_i\}_{i=1}^n)$ depends on the actual positions of its extreme points. We will prove first the case $n=1$, which contains the core of the idea. Note that in this case, $\Pa(1)=\mathcal{K}^1$.

\begin{lema}\label{lema:para1d}
If a mapping $V:\mathcal{K}^1\to \mathbb{R}$ is a valuation then there exist two functions $f,g:\mathbb{R}\to\mathbb{R}$ such that for any $a\leq b$
\[
    V([a,b])=f(a)+g(b).
\]
Conversely, for any two functions $f,g:\mathbb{R}\to\mathbb{R}$, the mapping $V:\mathcal{K}^1\to\mathbb{R}$ defined as
\[
    V([a,b])\coloneqq f(a)+g(b)
\]
for $a\leq b$, is a valuation.
\end{lema}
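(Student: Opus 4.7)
The plan is to handle the forward direction by a constructive, piecewise definition of $f$ and $g$ anchored at a reference point, and to handle the converse by a direct verification that reduces to the one-dimensional case of Lemma~\ref{lema:unioncuad}.

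For the forward direction, I would fix $0$ as reference point and set $c := V(\{0\})$. The valuation identity applied to the split $[a,b] = [a,0] \cup [0,b]$ (with intersection $\{0\}$), valid for $a \leq 0 \leq b$, gives
\[
V([a,b]) = V([a,0]) + V([0,b]) - V(\{0\}),
\]
which already separates as a function of $a$ plus a function of $b$. This motivates setting
\[
f(a) := V([a,0]) - c \quad (a \leq 0), \qquad g(b) := V([0,b]) \quad (b \geq 0).
\]
To extend, I would use the splitting $[0,b] = [0,a] \cup [a,b]$ for $0 \leq a \leq b$, which forces $f(a) := V(\{a\}) - V([0,a])$ for $a \geq 0$; symmetrically, $g(b) := V(\{b\}) - V([b,0]) + c$ for $b \leq 0$. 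The two definitions of $f$ (resp.\ $g$) both give $0$ (resp.\ $c$) at the reference point, so there is no inconsistency.

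It then remains to verify $V([a,b]) = f(a) + g(b)$ in each sign configuration of $(a,b)$. The case $a \leq 0 \leq b$ is the defining case; the cases $0 \leq a \leq b$ and $a \leq b \leq 0$ each follow from one application of the valuation identity at an interior point ($a$ or $b$ respectively), which produces exactly the required separation.

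The converse is immediate: define $V([a,b]) := f(a) + g(b)$ from arbitrary $f,g$. Whenever $K_1 \cup K_2 \in \mathcal{K}^1$, the one-dimensional case of Lemma~\ref{lema:unioncuad} says that either one interval is contained in the other (and the valuation identity is trivial), or $K_1 = [a_1, b_1]$, $K_2 = [a_2,b_2]$ with $a_1 < a_2 \leq b_1 < b_2$, whence $K_1 \cup K_2 = [a_1,b_2]$ and $K_1 \cap K_2 = [a_2,b_1]$; both sides of the valuation identity then equal $f(a_1) + f(a_2) + g(b_1) + g(b_2)$. There is no serious obstacle; the only point requiring care is the bookkeeping to ensure the piecewise definitions agree at the reference point and that all three nontrivial sign configurations of $(a,b)$ are checked.
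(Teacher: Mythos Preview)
Your proposal is correct and arrives at exactly the same piecewise formulas for $f$ and $g$ (up to the absorption of the constant $V(\{0\})$) that the paper provides as its ``working definitions,'' and your converse is the same direct computation as the paper's. The only difference is presentational: the paper first motivates these formulas by embedding $[a,b]$ in a large interval $[-m,m]$ and observing that the resulting expressions $f_m(a)=V[a,m]-V[0,m]$ and $g_m(b)=V[-m,b]-V[-m,0]$ stabilize, whereas you go straight to the sign-configuration case analysis---which is slightly more economical since it avoids the limit step entirely.
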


\begin{proof}
Let $V$ be a valuation on $\mathcal{K}^1$. Let $V(a)$ denote $V([a,a])=V(\{a\})$. Clearly, for $a<b$ we get
\[
    V[-m,b]+V[a,m]=V[-m,m]+V[a,b]=V[-m,0]+V[0,m]-V(0)+V[a,b]
\]
for any $m>|a|,|b|$. Thus,
\[
    V[a,b]=V[a,m]-V[0,m] + V[-m,b]-V[-m,0]+V(0).
\]
We define the functions $f_m:(-m,m)\to\mathbb{R}$ and $g_m:(-m,m)\to\mathbb{R}$ as
\[
    f_m(a)=V[a,m]-V[0,m] \quad\text{ and }\quad g_m(b)=V[-m,b]-V[-m,0].
\]
Therefore,
\[
    V[a,b]=f_m(a)+g_m(b)+V(0).
\]
Note that for fixed $a,b\in\mathbb R$, the values $f_m(a)$ and $g_m(b)$ are independent of $m$ as long as $m$ is large enough, hence we can define
\[
    f(a)\coloneqq\lim_{m\to\infty} f_m(a),\quad g(b)\coloneqq\lim_{m\to\infty}g_m(b),
\]
and 
\[
    V[a,b]=f(a)+g(b)+V({0}).
\]
For $a=b$, the same computations show that
\[
    V(a)=f(a)+g(a)+V(0).
\]
For completeness, we include working definitions of the previous functions:
\[
    f(a)=\left\{\begin{array}{ll}
        V[a,0]-V(0) & \text{if }a<0 \\
        0 & \text{if }a=0\\
        -V[0,a]+V(a)&\text{if }a>0, 
    \end{array}  \right. \quad g(b)=\left\{\begin{array}{ll}
        -V[b,0]+V(b) & \text{if }b<0 \\
        0 & \text{if }b=0\\
        V[0,b]-V(0)&\text{if }b>0.
    \end{array}  \right.
\]
Finally, the constant $V(0)$ can be included in any of the two previous functions and we get the result.

Conversely, if $f$ and $g$ are any functions from $\mathbb{R}$ to $\mathbb{R}$, define the function $V:\mathcal{K}^1\to\mathbb{R}$ as 
\[
    V[a,b]\coloneqq f(a)+g(b) \quad \text{and}\quad V(\{a\})=f(a)+g(a).
\]
Take $K=[a^K,b^K]$ and $L=[a^L,b^L]$ such that $K\cup L\in\Pa(\{e_1\})$. Note that the union is convex only when $K\cap L\neq\emptyset$. In that case,
\[
    K\cup L=[\min\{a^K,a^L\},\max\{b^K,b^L\}],\quad K\cap L=[\max\{a^K,a^L\},\min\{b^K,b^L\}].
\]
Thus,
\begin{align*}
    V(K\cup L)+V(K\cap L)&= f(\min\{a^K,a^L\})+g(\max\{b^K,b^L\})+ \\
    &\quad  + f(\max\{a^K,a^L\})+g(\min\{b^K,b^L\})= \\
    & =f(a^K)+f(a^L)+g(b^K)+g(b^L)=V(K)+V(L).
\end{align*}
\end{proof}

\begin{rem}
Note that $f$ and $g$ in previous lemma are not unique. In the proof presented, given a valuation we choose a precise definition of $f$ and $g$ but, clearly, we could replaced them by $f-c$ and $g+c$ for any $c\in\mathbb{R}$.
In fact, let $f,f',g,g'$ be functions from $\mathbb{R}$ to itself such that 
    \[
        f(a)+g(b)=f'(a)+g'(b)
    \]
for any $a\leq b$. Then,
    \[
        f(a)-f'(a)=g'(b)-g(b)
    \]
which implies that $f-f'=g'-g=c$ with $c\in\mathbb{R}$. This shows that, in the one dimensional case, the only possible ambiguity in the choice of $f$ and $g$ is the addition or subtraction of a constant; however, this will not be true in higher dimensions. This essential uniqueness in the case $n=1$ implies that properties of the valuation will directly  translate to properties of any representation $f$ and $g$. This fact can then be used as in the following corollary.
\end{rem}

\begin{coro}
A valuation $V:\mathcal{K}^1\to\mathbb{R}$ is continuous (respectively, Borel) if and only if both $f$ and $g$ are continuous (resp. Borel).
\end{coro}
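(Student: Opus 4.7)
The plan is to exploit the explicit formulas for $f$ and $g$ in terms of $V$ provided in the proof of Lemma \ref{lema:para1d}, combined with the elementary fact that the Hausdorff metric on $\mathcal{K}^1$ is equivalent (via the correspondence $[a,b]\leftrightarrow (a,b)$) to the restriction of the Euclidean metric on $\{(a,b)\in\mathbb{R}^2:a\leq b\}$. Concretely, a sequence $[a_n,b_n]\to[a,b]$ in Hausdorff metric if and only if $a_n\to a$ and $b_n\to b$ in $\mathbb R$, so the map $\Phi:\{(a,b)\in\mathbb R^2:a\le b\}\to\mathcal K^1$ given by $\Phi(a,b)=[a,b]$ is a homeomorphism onto its image, and in particular Borel bimeasurable.

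For the forward implication, suppose $V$ is continuous (resp.\ Borel). Recall from the proof of Lemma \ref{lema:para1d} that one may take
\[
    f(a)=\begin{cases} V([a,0])-V(\{0\}) & a\leq 0, \\ V(\{a\})-V([0,a]) & a\geq 0,\end{cases}\qquad g(b)=\begin{cases} V(\{b\})-V([b,0]) & b\leq 0, \\ V([0,b])-V(\{0\}) & b\geq 0.\end{cases}
\]
Each of the maps $a\mapsto [a,0]$ (on $a\leq 0$), $a\mapsto [0,a]$ (on $a\geq 0$), and $a\mapsto\{a\}$ (on $\mathbb R$) is continuous from $\mathbb R$ into $\mathcal K^1$, as they are just restrictions of $\Phi$ to particular lines. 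Composing with $V$ and taking the (finite) differences appearing in the formulas above shows that $f$ and $g$ inherit continuity (resp.\ Borel measurability) from $V$. The same argument works for $g$.

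For the converse, suppose $f$ and $g$ are continuous (resp.\ Borel). If $[a_n,b_n]\to[a,b]$ in Hausdorff metric, then $a_n\to a$ and $b_n\to b$, so $V([a_n,b_n])=f(a_n)+g(b_n)\to f(a)+g(b)=V([a,b])$ by continuity. For the Borel case, write $V=(f\circ\pi_1+g\circ\pi_2)\circ\Phi^{-1}$, where $\pi_1,\pi_2$ are the coordinate projections on $\{(a,b):a\leq b\}$; since $\pi_1,\pi_2$ are continuous and $f,g$ are Borel, the sum $f\circ\pi_1+g\circ\pi_2$ is Borel on the closed half-plane, and composing with the Borel measurable $\Phi^{-1}$ yields that $V$ is Borel on $\mathcal K^1$.

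No step is genuinely subtle here; the only mild point is to notice that the ambiguity of $f$ and $g$ by an additive constant does not affect either continuity or Borel measurability, so the equivalence is well-posed regardless of which representatives are chosen.
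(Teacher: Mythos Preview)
Your proof is correct and follows exactly the approach the paper intends: the corollary is stated without proof, relying on the explicit formulas for $f$ and $g$ given in the proof of Lemma \ref{lema:para1d} together with the preceding Remark on essential uniqueness up to a constant. You have simply filled in the straightforward details (the homeomorphism $\Phi$, the piecewise formulas, and the observation that the constant ambiguity is harmless), which is precisely what the paper leaves to the reader.
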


It follows immediately (choosing for instance $f$ and $g$ Borel not continuous functions) that an analog of the Automatic Continuity Theorem \ref{teo:CFE} does  not hold in this setting.  It is also interesting to note that, in the one-dimensional situation, every Borel valuation is an iterated pointwise limit of continuous valuations, in the same way as every Borel function on a metric space can be reached by iterated pointwise limits of continuous functions (cf. \cite[Theorem 11.6]{kechris}). 


In the proof of Lemma \ref{lema:para1d} we embedded the interval $[a, b]$ into a larger symmetric interval $[-m, m]$. We then subdivided this larger interval using the endpoints of $[a, b]$ as reference points. This allowed us to decompose the valuation into two parts, each depending only on one of the endpoints. The same idea works for higher dimensional parallelotopes. In these case the set of extreme points will be given by the vertices of the parallelotope. Although this method is geometrically elegant, it is difficult to implement as it can become overly complex and confusing when trying to keep track of all different intersections. To address this, we will use a different approach based on the use of Lemma \ref{lema:unioncuad} and reformulate the problem by translating it from $\Pa(\{e_i\}_{i=1}^n)$ to $\left(\mathcal{K}^{1}\right)^n=\left(\Pa(1)\right)^{n}$. This reformulation of the problem will help us simplify the proof with a straightforward induction on the dimension. 

\begin{lema} \label{lema:reform.valuation}
Let $V:\Pa(\{e_i\}_{i=1}^n)\to\mathbb{R}$ be a mapping and let $F:\left(\mathcal{K}^{1}\right)^n\to\mathbb{R}$ be such that
\[
    V\left(\sum_{i=1}^n[a_i,b_i]e_i\right)=F([a_1,b_1],\dots,[a_{n},b_n]).
\]
Then, $V$ is a valuation in $\Pa(\{e_i\}_{i=1}^n)$ if and only if for any $j\in\{1,\dots,n\}$ given $a_i\leq b_i$ real numbers for all $i\neq j$ the function $f_j:\mathcal{K}^{1}\to\mathbb{R}$ defined as
\[
    f_j([x,y])\coloneqq F([a_1,b_1],\dots,[a_{j-1},b_{j-1}],[x,y],[a_{j+1},b_{j+1}],\dots,[a_n,b_n])
\]
is a valuation.
\end{lema}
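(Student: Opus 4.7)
The plan is to reduce the multi-dimensional valuation property of $V$ to the one-dimensional valuation property of each slice $f_j$, using Lemma \ref{lema:unioncuad} as the central reduction tool. Both implications hinge on the fact that the only non-trivial way for a union of two parallelotopes from $\Pa(\{e_i\}_{i=1}^n)$ to remain in the class is that the two parallelotopes differ along a single coordinate.

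For the forward implication, I would fix $j\in\{1,\dots,n\}$ and real numbers $a_i\leq b_i$ for $i\neq j$, and take two intervals $[x_1,y_1],[x_2,y_2]\in\mathcal{K}^1$ whose union lies in $\mathcal{K}^1$. Forming the parallelotopes $K_l=\sum_{i\neq j}[a_i,b_i]e_i+[x_l,y_l]e_j$ for $l=1,2$, which coincide in every coordinate but the $j$-th, both their union and their intersection remain in $\Pa(\{e_i\}_{i=1}^n)$. Substituting $K_1,K_2$ into the valuation identity of $V$ and unwinding the definition of $f_j$ then yields exactly the valuation identity for $f_j$ on $[x_1,y_1]$ and $[x_2,y_2]$.

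For the converse, assume every $f_j$ is a valuation on $\mathcal{K}^1$ and take $K_1,K_2\in\Pa(\{e_i\}_{i=1}^n)$ with $K_1\cup K_2\in\Pa(\{e_i\}_{i=1}^n)$. By Lemma \ref{lema:unioncuad}, either one of the parallelotopes is contained in the other, in which case the valuation identity is immediate, or there is a unique coordinate $k$ along which $K_1,K_2$ differ while agreeing in every other. In this second scenario, the union and intersection also differ from $K_1,K_2$ only along the $k$-th coordinate, so the identity for $V$ collapses verbatim into the one-dimensional valuation identity for $f_k$ applied to the $k$-th coordinate intervals of $K_1$ and $K_2$, which holds by hypothesis.

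The argument presents no substantial obstacle; the only bookkeeping point is to verify, in case (2) of Lemma \ref{lema:unioncuad}, that the two $k$-th intervals have nonempty intersection so that $K_1\cap K_2\in\Pa(\{e_i\}_{i=1}^n)$ and $f_k$ may legitimately be applied. This follows automatically from the convexity of $K_1\cup K_2$, which forces the intervals to overlap.
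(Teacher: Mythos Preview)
Your proposal is correct and follows essentially the same approach as the paper's proof: both directions hinge on Lemma~\ref{lema:unioncuad}, with the forward implication built by inserting parallelotopes that differ in a single coordinate into the valuation identity for $V$, and the converse reducing via the dichotomy of Lemma~\ref{lema:unioncuad} to the one-dimensional identity for the relevant $f_j$. Your explicit remark that the $k$-th intervals must overlap (so that $K_1\cap K_2\neq\emptyset$ and $f_k$ applies) is a point the paper leaves implicit but is indeed needed.
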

\begin{proof}
Let $j\in\{1,\dots,n\}$ and fix $a_i\leq b_i$ real numbers for all $i\neq j$. Then, for any $a\leq b$, $f_j$ satisfies
\[
    V\left(\sum_{\substack{
         i=1  \\
          i\neq j
    }}^n [a_i,b_i]e_i+[a,b]e_j\right)=f_j([a,b]).
\]
Now, notice that for $x,y,x'$ and $y'$ real numbers satisfying $[x,y]\cap[x',y']\neq\emptyset$, we can define
\[
    K=\sum_{\substack{
         i=1  \\
          i\neq j
    }}^n [a_i,b_i]e_i+[x,y]e_j\quad \text{and}\quad L=\sum_{\substack{
         i=1  \\
          i\neq j
    }}^n [a_i,b_i]e_i+[x',y']e_j
\]
Therefore,
\begin{align*}
    &K\cup L=\sum_{\substack{
         i=1  \\
          i\neq j
    }}^n [a_i,b_i]e_i+\left[\min\{x,x'\},\max\{y,y'\}\right]e_j,\\
    &K\cap L=\sum_{\substack{
         i=1  \\
          i\neq j
    }}^n [a_i,b_i]e_i+\left[\max\{x,x'\},\min\{y,y'\}\right]e_j.
\end{align*}
This implies 
\[
    \begin{array}{ll}
        f_j([x,y])=V(K),  & f_j([\min\{x,x'\},\max\{y,y'\}])=V(K\cup L), \\
        f_j([x',y'])=V(L), & f_j([\max\{x,x'\},\min\{y,y'\}])=V(K\cap L).
    \end{array}
\]
Thus, if $V$ is a valuation, clearly we have that $f_j$ satisfies the property.

Conversely, if all $f_j$ satisfy the property, by Lemma \ref{lema:unioncuad}, we only have two cases to consider. If $K\subset L$ or $L\subset K$ the equality is trivially obtained, but the second case is precisely the equality given by $f_j$.
\end{proof}
From now on, to simplify notation, we will omit the interval notation in the previous mapping $F$, that is, we will write
\[
    F([a_1,b_1],\dots,[a_{n},b_n])=F(a_1,b_1,\dots,a_n,b_n).
\]
\begin{teorema}\label{t:val on pa}
If a mapping $V:\Pa(\{e_i\}_{i=1}^n)\to\mathbb{R}$ is a valuation then there exist functions $F_{i_1,\dots,i_n}:\mathbb{R}^n\to\mathbb{R}$ with $i_j\in\{1,2\}$ for $j\in\{1,\dots,n\}$, such that
\[
    V\left(\sum_{i=1}^n[a_i^{(1)},a_i^{(2)}]e_i\right)=\sum_{i_1,\dots,i_n=1}^2F_{i_1,\dots,i_n}(a_1^{(i_1)},\dots,a_n^{(i_n)})
\]
for $a_i^{(1)}\leq a_i^{(2)}$ in $\mathbb{R}$. Conversely, for any collection of functions $F_{i_1,\dots,i_n}:\mathbb{R}^n\to\mathbb{R}$ with $i_j\in\{1,2\}$ for $j\in\{1,\dots,n\}$, the mapping $V:\Pa(\{e_i\}_{i=1}^n)\to\mathbb{R}$ defined as 
\[
     V\left(\sum_{i=1}^n[a_i^{(1)},a_i^{(2)}]e_i\right)=\sum_{i_1,\dots,i_n=1}^2F_{i_1,\dots,i_n}(a_1^{(i_1)},\dots,a_n^{(i_n)})
\]
for $a_i^{(1)}\leq a_i^{(2)}$ in $\mathbb{R}$, is a valuation. 
\end{teorema}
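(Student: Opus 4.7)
The plan is to argue by induction on $n$, using Lemma \ref{lema:para1d} as the base case $n=1$ and Lemma \ref{lema:reform.valuation} to pass between $V$ and the associated function $F$ of $n$ intervals. For the direct implication, assume the result holds in dimension $n-1$ and let $V$ be a valuation on $\Pa(\{e_i\}_{i=1}^n)$, with $F$ given by $V\bigl(\sum[a_i,b_i]e_i\bigr)=F([a_1,b_1],\dots,[a_n,b_n])$. Fixing the first $n-1$ intervals, Lemma \ref{lema:para1d} applied in the last variable produces functions $G$ and $H$, depending on the fixed data and on one endpoint of the last interval, such that
\[
    F([a_1,b_1],\dots,[a_{n-1},b_{n-1}],[x,y])=G(a_1,b_1,\dots,a_{n-1},b_{n-1},x)+H(a_1,b_1,\dots,a_{n-1},b_{n-1},y).
\]

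The crucial observation is that the explicit formulas given in the proof of Lemma \ref{lema:para1d} express $G$ and $H$, at each fixed value of the last endpoint, as concrete finite $\mathbb{Z}$-linear combinations of evaluations of $F$ on parallelotopes where the last coordinate slot is fixed (e.g.\ $[0,x]$, $\{0\}$, $\{x\}$). Since $F$ is a one-dimensional valuation in each of its first $n-1$ variables by Lemma \ref{lema:reform.valuation}, the same linear combinations remain one-dimensional valuations in each of those variables. Applying Lemma \ref{lema:reform.valuation} in the reverse direction, for each fixed $x$ (respectively $y$) the map $G(\cdot,\dots,\cdot,x)$ (respectively $H(\cdot,\dots,\cdot,y)$) defines a valuation on $\Pa(\{e_i\}_{i=1}^{n-1})$. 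The induction hypothesis then decomposes each of these into $2^{n-1}$ summands indexed by a choice of endpoint in each of the first $n-1$ intervals; relabeling with an additional index $i_n\in\{1,2\}$ for whether the summand came from $G$ or $H$, and merging the extra parameter $x$ or $y$ into an $n$-th argument, yields the desired family $\{F_{i_1,\dots,i_n}\}$ with $2^n$ elements.

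For the converse, given any such family, I would again reduce via Lemma \ref{lema:reform.valuation} to checking that, for each coordinate $j$ and each choice of the remaining $2(n-1)$ endpoints, the resulting function of $[x,y]$ is a one-dimensional valuation. Splitting the sum over $(i_1,\dots,i_n)$ according to whether $i_j=1$ or $i_j=2$ immediately writes the expression as $f(x)+g(y)$, where $f$ and $g$ absorb the fixed data and the remaining summation; the converse direction of Lemma \ref{lema:para1d} then identifies this as a one-dimensional valuation.

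The only real subtlety, and the main thing to get right, lies in the inductive step: the pair $(G,H)$ supplied by Lemma \ref{lema:para1d} is only determined up to a constant (as noted in the remark following that lemma), so one must commit to the explicit formulas from its proof in order to guarantee that $G$ and $H$ inherit the valuation property in the remaining coordinates. Once this canonical choice is adopted, both the induction and the converse reduce to bookkeeping.
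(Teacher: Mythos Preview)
Your proposal is correct and follows essentially the same approach as the paper: induction on $n$ with Lemma~\ref{lema:para1d} as the base case, applying that lemma in the last coordinate via its explicit formulas so that the resulting functions inherit the valuation property in the remaining coordinates (the paper verifies this directly, you phrase it as a $\mathbb{Z}$-linear combination argument together with Lemma~\ref{lema:reform.valuation}), then invoking the induction hypothesis. Your observation about needing the canonical choice of $(G,H)$ is precisely the point the paper handles by writing out the limit formulas, and your treatment of the converse via Lemma~\ref{lema:reform.valuation} and the $f(x)+g(y)$ split is the same reduction the paper indicates.
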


\begin{proof}
We will prove this by induction on the dimension $n$. Lemma \ref{lema:para1d} corresponds to the case $n=1$. Let us now assume the statement holds for $n-1$ and we will prove it for $n$. 

Let $V:\Pa(\{e_i\}_{i=1}^n)\to\mathbb{R}$ be a valuation. For fixed $a_i^{(1)}\leq a_i^{(2)}$ with $1\leq i\leq n-1$, let $P_{n-1}=P_{n-1}(a_1^{(1)},a_1^{(2)},\ldots, a_{n-1}^{(1)},a_{n-1}^{(2)})\coloneqq\sum_{i=1}^{n-1}[a_i^{(1)},a_i^{(2)}]e_i$ and define the following mapping $V':\mathcal{K}^1\to\mathbb{R}$ as
\[
    V'([a_{n}^{(1)},a_n^{(2)}])=V\left(P_{n-1} + [a_n^{(1)},a_n^{(2)}]e_n  \right), 
\]
for $a_n^{(1)}\leq a_n^{(2)}$. It is easy to see that this mapping is a valuation, so by Lemma \ref{lema:para1d}, there are functions $F,G:\mathbb{R}^{2n-1}\to\mathbb{R}$ and $H:\mathbb{R}^{2n-2}\to\mathbb{R}$ such that
\begin{align*}
    V\left(\sum_ {i=1}^n[a_i^{(1)},a_i^{(2)}]e_i\right)&=F(a_1^{(1)},\dots,a_{n-1}^{(2)},a_n^{(1)})+G(a_1^{(1)},\dots,a_{n-1}^{(2)},a_n^{(2)})+\\
    &\quad+H(a_1^{(1)},\dots,a_{n-1}^{(2)}).
\end{align*}

As in the proof of Lemma \ref{lema:para1d}, these functions can be computed as 
    \begin{align*}
         F(a_1^{(1)},\dots,a_{n-1}^{(2)},a_n^{(1)})&=\!\lim_{m\to\infty}V\left(P_{n-1}\! + [a_n^{(1)},m]e_n\right)\!-V\left( P_{n-1} + [0,m]e_n\right)  \\
         G(a_1^{(1)},\dots,a_{n-1}^{(2)},a_n^{(2)})&=\!\lim_{m\to\infty}V\left(P_{n-1}\!+[-m,a_n^{(2)}]e_n\right)\!-V\left( P_{n-1} + [-m,0]e_n\right)\\
          H(a_1^{(1)},\dots,a_{n-1}^{(2)})&=V\left( P_{n-1}\right).
    \end{align*}
    
Now, note that for fixed $a_n^{(1)}$, for any $1\leq j\leq n-1$, $b_j^{(1)}\leq b_j^{(2)}$ and $c_j^{(1)}\leq c_j^{(2)}$ with $c_j^{(1)}\leq b_j^{(2)}$ and $b_j^{(1)}\leq c_j^{(2)}$, it is straightforward to check (using the definition of $F$ and the fact that $V$ is a valuation) that
\begin{align*}
    F(a_1^{(1)},\dots, b_j^{(1)},b_j^{(2)},\dots,a_{n-1}^{(2)},a_n^{(1)})+F(a_1^{(1)},\dots, c_j^{(1)},c_j^{(2)},\dots,a_{n-1}^{(2)},a_n^{(1)})=\\
    F(a_1^{(1)},\dots, c_j^{(1)},b_j^{(2)},\dots,a_{n-1}^{(2)},a_n^{(1)})+F(a_1^{(1)},\dots, b_j^{(1)},c_j^{(2)},\dots,a_{n-1}^{(2)},a_n^{(1)}).
\end{align*}
Similarly, for fixed $a_n^{(2)}$ we get that $G$ also satisfies
\begin{align*}
    G(a_1^{(1)},\dots, b_j^{(1)},b_j^{(2)},\dots,a_{n-1}^{(2)},a_n^{(2)})+G(a_1^{(1)},\dots, c_j^{(1)},c_j^{(2)},\dots,a_{n-1}^{(2)},a_n^{(2)})=\\
     G(a_1^{(1)},\dots, c_j^{(1)},b_j^{(2)},\dots,a_{n-1}^{(2)},a_n^{(2)})+G(a_1^{(1)},\dots, b_j^{(1)},c_j^{(2)},\dots,a_{n-1}^{(2)},a_n^{(2)}).
\end{align*}
In other words, for fixed $a_n^{(1)}$ and $a_n^{(2)}$ and using Lemma \ref{lema:reform.valuation}, the functions $F$ and $G$ on the first $n-1$ variables define valuations on $\Pa(\{e_i\}_{i=1}^{n-1})$, that is, viewing $\mathbb{R}^{n-1}$ as the hyperplane generated by $\{e_1,\dots,e_{n-1}\}$
It is also clear that $H$ is also a valuation in $\Pa(\{e_i\}_{i=1}^{n-1})$. Thus, by the induction hypothesis, the conclusion follows. The converse follows clearly as the $n=1$ case.
\end{proof}

\begin{rem}
As in the case $n=1$, this characterization easily yields counterexamples to an automatic continuity result as Theorem \ref{teo:CFE} for valuations on $\Pa(\{e_i\}_{i=1}^n)$ for any $n$. For instance, take all functions to be identically $0$ except $F_{1,\dots,1}$, and choose this function to be Borel and bounded but not continuous. This gives a Borel, bounded and not continuous valuation $V$. 
\end{rem}

\begin{rem}
As in the case $n=1$, this representation is not unique. In fact, when $n>1$ we have more freedom in choosing the representation and the continuity of $V$ no longer implies that all the corresponding representation functions are continuous. An example of this can be seen for $n=2$: Take $F_{1,2}(a_1^{(1)},a_2^{(2)})=f(a_2^{(2)})$, $F_{2,2}(a_2^{(1)},a_2^{(2)})=-f(a_2^{(2)})$ and $F_{1,1}=F_{2,1}=0$. Then, $V=0$ but we can choose $f$ to make the functions $F_{1,2}$ and $F_{2,2}$ not continuous. However, in the proof of the Lemma, if $V$ is continuous the functions we have defined are continuous, so at least one continuous representation exists. 
\end{rem}

\end{document}